\theoremstyle{plain}
\newtheorem*{theorem*}{Theorem}
\newtheorem{theorem}{Theorem}
\def\textmatrix#1&#2\\#3&#4\\{\bigl({#1 \atop #3}\ {#2 \atop #4}\bigr)}
\newcommand{\bydef}{\stackrel{\rm def}{=}}
\newcommand{\cle}{{\mathcal{E}}}
\newcommand{\clh}{\mathcal{H}}
\newcommand{\clk}{{\mathcal{K}}}
\newcommand{\clm}{{\mathcal{M}}}
\newcommand{\zbar}{{\overline{z}}}
\newcommand{\wbar}{{\overline{w}}}
\def\C{\mathbb{C}}
\def\v{\varphi}
\def\bl{\boldsymbol}
\def\D{\mathbb D}
\def\m{\mathcal}
\def\mb{\mathbb}
\def\i{\prime}
\def\c{curvature~}
\newtheorem{thm}{Theorem}[section]
\newtheorem{cor}[thm]{Corollary}
\newtheorem{lem}[thm]{Lemma}
\newtheorem{prop}[thm]{Proposition}
\newtheorem{defn}[thm]{Definition}
\newtheorem{rem}[thm]{Remark}
\newcommand{\be}{\begin{equation}}
\newcommand{\ee}{\end{equation}}
\newcommand{\bea}{\begin{eqnarray}}
\newcommand{\eea}{\end{eqnarray}}
\newcommand{\Bea}{\begin{eqnarray*}}
\newcommand{\Eea}{\end{eqnarray*}}
\newcounter{cnt1}
\newcounter{cnt2}
\newcounter{cnt3}
\newcommand{\blr}{\begin{list}{$($\roman{cnt1}$)$}
 {\usecounter{cnt1} \setlength{\topsep}{0pt}
 \setlength{\itemsep}{0pt}}}
\newcommand{\bla}{\begin{list}{$($\alph{cnt2}$)$}
 {\usecounter{cnt2} \setlength{\topsep}{0pt}
 \setlength{\itemsep}{0pt}}}
\newcommand{\bln}{\begin{list}{$($\arabic{cnt3}$)$}
 {\usecounter{cnt3} \setlength{\topsep}{0pt}
 \setlength{\itemsep}{0pt}}}
\newcommand{\el}{\end{list}}
\font\myfont=cmr12 at 16pt
\title[The Agler-Young class]{\myfont The Agler-Young class}
\author[Bhattacharyya]{Tirthankar Bhattacharyya}
\address[Bhattacharyya]{Department of Mathematics, Indian Institute of Science, Bangalore 560 012, India.}
\email{tirtha@iisc.ac.in}
\author[Shyam Roy]{Subrata Shyam Roy}
\address[Shyam Roy]{Indian Institute of Science Education and Research Kolkata, Campus Road, Mohanpur, West Bengal 741 246, India. }
\email{ssroy@iiserkol.ac.in}
\author[Yadav]{Tapesh Yadav}
\address[Yadav]{Department of Mathematics, Indian Institute of Science, Banaglore 560 012, India.}
\email{tapeshyadav1@gmail.com}	
\begin{document}

\thanks{Subject classification: Primary 47A13, 47A62, 47B35; Secondary 30H10.}
\thanks{Key words and phrases: The Agler-Young class, Toeplitz operators, Dilation, Fundamental functions, Fundamental operators.}
\thanks{This research is supported by University Grants Commission, India via CAS}

\begin{abstract}

\vspace*{5mm}

 This note introduces a special class of tuples of bounded operators on a Hilbert space. It is called the Agler Young class. Major results about this class include a Wold decomposition and a dilation theorem. The structure of the dilation is completely spelt out. A characterization of this class using the hereditary functional calculus of Agler is obtained and examples are discussed. Toeplitz operators play a major role in this note. An Agler-Young pair arising from a truncated Toeplitz operator is characterized. Thus, we extend results obtained in the case of commuting operators by several authors over many decades to the non-commutative situation. The results for the commuting case can be recovered as special cases.

\end{abstract}

\maketitle

\section{Introduction}
\subsection{Block Toeplitz operators}
In Hilbert space operator theory, it is important to identify a special class of nice operators or operator tuples and to decode its structure in terms of simpler objects in the same class. An example of this kind of endeavours which has stood the test of time is the Wold decomposition of an isometry: given an isometry $A$ on a Hilbert space $\mathcal H$, there are two $A$-reducing subspaces $\mathcal H_1$ and $\mathcal H_2$ such that $\mathcal H = \mathcal H_1 \oplus \mathcal H_2$,  $A |_{\mathcal H_1} $ is a unitary and $A |_{\mathcal H_2} $ is a unilateral shift (of some multiplicity, possibly infinite). This simple theorem has profound applications in different areas of mathematics and statistics, see \cite{BDF}, \cite{BKS}, \cite{CPS}, \cite{GG}, \cite{GS}, \cite{HL}, \cite{KMN}, \cite{KM} and \cite{KO}.

It is clear, then, that for a Wold type decomposition to work for an operator (or a tuple of operators), a crucial ingredient is the unilateral shift (of multiplicity one or higher). This is the simplest example of a Toeplitz operator. Let us collect some preliminaries of {\em block Toeplitz operators} from the seminal paper of Rabindranathan \cite{Dadu}. Let $\mathcal E$ be a  Hilbert space. Let $\mathcal O(\mathbb D, \mathcal E)$ be the class of all $\mathcal E$ valued holomorphic functions on the open unit disc $\mathbb D = \{ z \in \mathbb C : |z| < 1\}$. Let
$$H^2(\mathcal E) = \{ f = \sum_{n=0}^\infty a_k z^k \in \mathcal O(\mathbb D, \mathcal E) : \{a_k\}_{k\ge 0} \subset \mathcal E \mbox{ and } \| f \|^2 = \sum_{n=0}^\infty \| a_k \|^2 < \infty\}.$$
If $\mathcal E = \mathbb C$, we just write $H^2$. The space $H^2(\mathcal E)$ is isometrically isomorphic to $H^2 \otimes \mathcal E$ and sometimes this identification will be used without further mentioning it.

\begin{defn} \label{BlockT} If $\v$ is an $L^\infty(\mathcal B(\mathcal E))$ valued function defined on the unit circle $\mathbb T = \{z\in \mathbb C: |z| = 1\}$, then the Toeplitz operator $T_\v$ on $H^2(\mathcal E)$ is defined as
$$ T_\v g = P_+ M_\v g \mbox{ for } g \in H^2(\mathcal E)$$
where $M_\v$ is the multiplication operator on $L^2(\mathcal E)$ and $P_+$ is the projection from $L^2(\mathcal E)$ onto $H^2(\mathcal E)$. \end{defn}

Here the $L^\infty$ and $L^2$ are with respect to the normalized Haar measure of the circle group and we consider the natural embedding of $H^2(\mathcal E)$ as a subspace of $L^2(\mathcal E)$. If dim $\mathcal E > 1$, then the Toeplitz operator $T_\varphi$, in the definition above, is popularly called the $block$ Toeplitz operator. We shall not always do that. It will always be clear from the context whether the Hardy space concerned consists of scalar valued functions or vector valued functions.

\subsection{A canonical element}
In search of the right class of operator tuples that will play the role that the unilateral shift so efficiently played in the Wold decomposition theorem, consider the following example.

Every $L^2(\mathcal B(\mathcal E))$ function has a Fourier series expansion  with respect to the natural basis $\{ e_n(\theta) = \exp({in\theta}), n \in \mathbb Z\}$ of $L^2$ (see, for example, Lemma 4.2.8 of \cite{SG}). An $L^\infty(\mathcal B(\mathcal E))$ function $\varphi$ is in $L^2(\mathcal B(\mathcal E))$. Let
 $$\varphi = \sum_{n \in \mathbb Z} A_n e_n$$
 be its Fourier series expansion. Here, the $A_n$ are from $\mathcal B(\mathcal E)$. The function $\varphi$ is said to be holomorphic if all the negative Fourier coeffcients are $0$. Let $H^\infty\big(\m B (\cle)\big)$ denote the subalgebra of $L^\infty(\mathcal B(\mathcal E))$ that consists of all such holomorphic elements of $L^\infty(\mathcal B(\mathcal E))$. Consider an $(n-1)$-tuple $\bl f = (f_1, f_2, \ldots , f_{n-1})$ of functions from $H^\infty\big(\m B (\cle)\big)$. Set $$\v_i(z) = z{f_i}(z) + {f_{n-i}}(z)^*  \mbox{ for } i=1,2, \ldots ,n-1.$$ This tuple of functions $(\v_1, \v_2, \ldots ,\v_{n-1})$ defined on  $\mathbb T$ and taking values in $L^\infty(\mathcal B(\mathcal E))$ is called the {\em co-analytic extension} of $(f_1, f_2, \ldots ,f_{n-1})$.
Let $S_i = T_{\v_i}$ for $i=1,2, \ldots , n-1$ and let $S_n$ be the pure isometry $M_z$ on $H^2(\cle)$. For $z$ on the unit circle,
$$ \v_{n-i}(z)^* z = (\zbar f_{n-i}(z)^* + f_i(z)) z = f_{n-i}(z)^* + zf_i(z) = \v_i(z).$$
So, $\underline{S} = (S_1, S_2, \ldots,S_n)$ has the property that $S_n$ is a unilateral shift (of multiplicity equal to the dimension of the space $\cle = \mathcal D_{S_n^*}$ and the rest of the $S_i$ are Toeplitz operators that satisfy
$$ S_i = S_{n-i}^*S_n.$$

\begin{defn} The tuple $\underline{S}$ described above is called the canonical Agler-Young isometry associated with the function tuple $\bl f$. The functions $f_1, f_2, \ldots , f_{n-1}$ are called the fundamental functions of the canonical Agler-young isometry $\underline{S}$. \end{defn}

\subsection{The Agler-Young class} We shall work towards a Wold decomposition of an Agler-Young isometry defined below. If $T$ is a contraction, let $D_{T} = (I - T^*T)^{1/2}$ and $\mathcal D_{T} =  \overline{\rm Ran} D_{T}$. This notation goes back to Sz.-Nagy.

\begin{defn} For a tuple of bounded operators $\underline{S} = (S_1, S_2, \ldots,S_n)$ on a Hilbert space $\mathcal H$ such that $S_n$ is a contraction, the $n-1$ operator equations
\begin{equation} \label{AYdef}
S_i - S_{n-i}^*S_n = D_{S_n} X_i D_{S_n} \text{ for } i=1,2, \ldots , n-1 \text{ and } X_{i} \in \mathcal B(\mathcal D_{S_n})\end{equation}
are called its fundamental equations. The operator tuple $\underline{S}$ is said to be in the Agler-Young class $AY_n$ or called an Agler-Young tuple if $S_n$ is a contraction and $\underline{S}$ satisfies the fundamental equations.  \end{defn}

For an Agler-Young tuple $(S_1, S_2, \ldots,S_n)$, the solution tuple $(X_1, X_2, \ldots ,X_{n-1})$ is unique and will be called the {\em fundamental operator tuple} of $\underline{S}$. An Agler-Young tuple is called an {\em Agler-Young isometry} if $S_n$ is an isometry. An Agler-Young isometry is called a $pure$ Agler-Young isometry if the isometry $S_n$ is pure, i.e., $(S_n^*)^k$ converges strongly to $0$ as $k \rightarrow \infty$, i.e., $S_n$ is a shift of some multiplicity. The canonical Agler-Young isometry defined above is a pure Agler-Young isometry.

One still needs the concept similar to that of a unitary, i.e., the other ingredient in Wold decomposition.

\begin{defn}
An Agler-Young isometry is called an Agler-Young unitary if $S_n$ is a unitary operator. \end{defn}

An Agler-Young unitary $\underline{S} = (S_1, S_2, \ldots,S_n)$ has the property that $S_n$ is a unitary operator that commutes with each $S_i$ for $i=1,2, \ldots, n-1$. Indeed,
$$S_nS_i = S_n S_{n-i}^*S_n = S_n(S_i^*S_n)^*S_n = S_nS_n^*S_iS_n = S_iS_n.$$

From definition, it is clear that a tuple of bounded operators $\underline{S} = (S_1, S_2, \ldots,S_n)$ is an Agler-Young isometry if and only if $S_n$ is an isometry and $S_i = S_{n-i}^*S_n$ for $i=1,2, \ldots ,n-1$. This is a characterization which can be immediately used to prove that the  restriction of an Agler-Young isometry to an invariant subspace is again an Agler-Young isometry. Indeed, consider an Agler-Young isometry $\underline{S} = (S_1, S_2, \ldots,S_n)$ on a Hilbert space $\mathcal H$ and let $\clm$ be an invariant subspace. Let $S_i|_\clm = T_i$ for $i=1,2, \ldots ,n$. Obviously then with respect to the decomposition $\clh = \clm \oplus \clm^\perp$, the $S_i$ have the decomposition: $S_i = \textmatrix T_i & A_i \\ 0 & B_i \\$ for suitable operators $A_i$ and $B_i$. Clearly, $T_n$ is an isometry. We use $S_i = S_{n-i}^*S_n$ to get
$$\left(
    \begin{array}{cc}
      T_i & A_i \\
      0 & B_i \\
    \end{array}
  \right) = \left(
    \begin{array}{cc}
      T_{n-i}^* & 0 \\
      A_{n-i}^* & B_{n-i}^* \\
    \end{array}
  \right) \left(
    \begin{array}{cc}
      T_n & A_n \\
      0 & B_n \\
    \end{array}
  \right) = \left(
    \begin{array}{cc}
      T_{n-i}^*T_n & \star \\
      \star & \star \\
    \end{array}
  \right)$$
  and hence the tuple $\underline{T} = (T_1, T_2, \ldots, T_n)$ satisfies the characterization mentioned above. Needless to say that a canonical Agler-Young isometry defined earlier is an Agler-Young isometry.

\subsection{The Wold decomposition}

The first main theorem of this work is a structure theorem for Agler-Young isometries in the style of Wold. It is stated below and proved in Section 2.

\begin{theorem}[\textbf{The Wold decomposition theorem} for an Agler-Young isometry] \label{Wold}
Let $\underline{S} = (S_1, S_2. \ldots,S_n)$ be an Agler-Young isometry on $\mathcal H$ with $\dim\mathcal D_{S_n^*} < \infty$.
Then there is a unique orthogonal decomposition $\mathcal H = \mathcal H_1 \oplus \mathcal H_2$ of the Hilbert space $\mathcal H$ such that
\begin{enumerate}
\item [(a)]$\mathcal H_1$ and $\mathcal H_2$ are common reducing subspaces for the $S_i$,
 \item [(b)]$(S_1|_{\m H_1}, S_2|_{\m H_1}, \ldots ,S_n|_{\m H_1})$ is an Agler-Young unitary.
 \item [(c)] $S_n|_{\mathcal H_2}$ is a pure isometry (a unilateral shift) $V$. There is a unitary operator $W:\m H_2\to H^2(\m D_{V^*})$ and a unique $(n-1)$-tuple $\bl f=(f_1, f_2, \ldots , f_{n-1})$ of $\mathcal B(\mathcal D_{V^*})$ valued bounded holomorphic functions such that the tuple
$$ (WS_1|_{\m H_2}W^*, WS_2|_{\m H_2}W^*, \ldots ,WS_n|_{\m H_2}W^*)$$
is the canonical pure Agler-Young isometry associated with $\bl f$. Further, the following relation is satisfied for every $i =1, 2, \ldots , n-1:$
\begin{equation} \label{determinant} S_{n-i}^* - S_i S_n^* = 0_{\mathcal H_1} \oplus W^*\left( T_{f_i} (I - T_zT_z^*) \right)W.\end{equation}

\end{enumerate}

\end{theorem}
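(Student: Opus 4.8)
The natural starting point is the classical Wold decomposition of the single isometry $S_n$. Writing $\mathcal{H}_1 = \bigcap_{k\ge 0} S_n^k\mathcal{H}$ and $\mathcal{H}_2 = \bigoplus_{k\ge 0} S_n^k\mathcal{D}_{S_n^*}$ gives an orthogonal decomposition $\mathcal{H} = \mathcal{H}_1\oplus\mathcal{H}_2$ that reduces $S_n$, with $U := S_n|_{\mathcal{H}_1}$ unitary and $V := S_n|_{\mathcal{H}_2}$ a pure isometry (a shift of multiplicity $\dim\mathcal{D}_{S_n^*}<\infty$). Since any decomposition meeting (b) and (c) forces $S_n$ to be unitary on the first summand and a pure isometry on the second, the uniqueness asserted in the theorem will follow from uniqueness of the Wold decomposition of $S_n$. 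The plan is then to establish (a) first, and to read off (b), (c) and \eqref{determinant} from the resulting block structure.

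The crux is (a): that $\mathcal{H}_1$ and $\mathcal{H}_2$ reduce \emph{every} $S_i$, not merely $S_n$. Here I would first record the Toeplitz-type identity
\[ S_i = S_n^* S_i S_n \qquad (i=1,\dots,n-1), \]
obtained by substituting $S_{n-i}^* = S_n^* S_i$ (the characterization applied to the index $n-i$) into $S_i = S_{n-i}^* S_n$. Next, writing each $S_i$ as a $2\times 2$ operator matrix relative to $\mathcal{H}_1\oplus\mathcal{H}_2$ and feeding the identities $S_i = S_{n-i}^* S_n$ into the four entries, the off-diagonal corners become linked by $b_i = c_{n-i}^* V$ and $c_i = V^* c_i U$, where $c_i\colon\mathcal{H}_1\to\mathcal{H}_2$ and $b_i\colon\mathcal{H}_2\to\mathcal{H}_1$. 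The decisive step is to upgrade the weak relation to the \emph{strong} intertwining $V c_i = c_i U$ between the pure shift and the unitary; once this is available, iterating gives $V^m c_i = c_i U^m$, hence $c_i = V^m c_i U^{*m}$ and $\operatorname{Ran} c_i\subseteq\bigcap_{m\ge 0}\operatorname{Ran} V^m = \{0\}$, so $c_i = 0$, and then $b_i = c_{n-i}^* V = 0$. I expect this upgrade — equivalently, the statement that each $S_i$ leaves $\mathcal{H}_2$ invariant, i.e. that $S_i$ respects the Wold decomposition of $S_n$ — to be the main obstacle: it is exactly the point where the full strength of the fundamental relations of the Agler--Young class must enter, after which the purity of $V$ does the rest.

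Once (a) is in hand the remaining assertions are structural. For (b), the restriction of $\underline{S}$ to $\mathcal{H}_1$ still satisfies $S_i = S_{n-i}^* S_n$ with $S_n|_{\mathcal{H}_1}$ unitary, which is precisely an Agler--Young unitary. For (c), I would invoke the standard unitary equivalence $W\colon\mathcal{H}_2\to H^2(\mathcal{D}_{V^*})$ of a pure isometry with the shift, so that $WVW^* = T_z$. Conjugating the Toeplitz identity yields $T_z^*\,(WS_iW^*)\,T_z = WS_iW^*$, so by the Brown--Halmos characterization each $WS_i|_{\mathcal{H}_2}W^*$ is a block Toeplitz operator $T_{\psi_i}$; the relation $S_i = S_{n-i}^* S_n$ then transcribes into the symbol identity $\psi_{n-i}(z)^* z = \psi_i(z)$ on $\mathbb{T}$, which forces $\psi_i(z) = z f_i(z) + f_{n-i}(z)^*$ for a unique tuple $\bl f = (f_1,\dots,f_{n-1})$ of $\mathcal{B}(\mathcal{D}_{V^*})$-valued bounded holomorphic functions, exhibiting the conjugated tuple as the canonical pure Agler--Young isometry associated with $\bl f$. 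Finally \eqref{determinant} is a short computation: from $S_i S_n^* = S_{n-i}^* S_n S_n^*$ one gets $S_{n-i}^* - S_i S_n^* = S_{n-i}^*(I - S_n S_n^*)$, which is $0$ on $\mathcal{H}_1$ (there $S_n$ is unitary) and, on $\mathcal{H}_2$, equals $W^*\big(T_{\psi_{n-i}^*}(I - T_zT_z^*)\big)W$; since $I - T_zT_z^*$ is the projection onto the constants $\mathcal{D}_{V^*}$, on which $T_{\psi_{n-i}^*}$ acts as $T_{f_i}$, this is exactly $W^*\big(T_{f_i}(I - T_zT_z^*)\big)W$.
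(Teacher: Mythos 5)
Your outline reproduces the paper's architecture step for step --- Wold decomposition of $S_n$, block operator matrices, the Toeplitz identity $S_i = S_n^*S_iS_n$, the corner relations $b_i = c_{n-i}^*V$ and $c_i = V^*c_iU$, Brown--Halmos on $\mathcal{H}_2$ (with the finite dimensionality of $\mathcal{D}_{V^*}$ entering there), the Fourier analysis of the symbols, and the computation of \eqref{determinant} --- but it leaves the decisive step as an acknowledged conjecture rather than proving it. The step you flag as ``the main obstacle'', upgrading $c_i = V^*c_iU$ to the strong intertwining $Vc_i = c_iU$, is indeed the heart of the theorem, and no argument for it is offered. Worse, the proposed upgrade is equivalent to the range inclusion $\mathrm{Ran}\, c_i \subseteq \mathrm{Ran}\, V$: multiplying $c_i = V^*c_iU$ on the left by $V$ gives $Vc_i = VV^*c_iU$, so $Vc_i = c_iU$ holds if and only if $(I - VV^*)c_i = 0$. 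Nothing in the fundamental equations supplies that inclusion; in effect one already needs $c_i = 0$ to obtain it, so the route as proposed is circular at exactly the point where the proof must do real work.

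What the paper actually does is stay with the weak relation alone, and that is where its technical investment lies (Lemmas \ref{conj}, \ref{double} and \ref{rev}, packaged as Lemma \ref{ayv}: $V^*TU = T$ with $U$ unitary and $V$ a pure isometry forces $T = 0$). Iterating $c_i = V^{*m}c_iU^m$ gives $c_iU^{*m} = V^{*m}c_i \to 0$ strongly, by purity of $V$; but this does not immediately yield $c_i = 0$, because for $h \in \mathcal{H}_1$ one has $c_ih = (c_iU^{*m})(U^mh)$, and the vectors $U^mh$ move with $m$ while strong convergence is not uniform on bounded sets. The paper's resolution is genuinely delicate: insert finite-rank projections $P_l \to I$ strongly, observe that $\lim_{l\to\infty}\|c_iU^{*k}P_lU^kc_i^*h\| = \|c_ic_i^*h\|$ for each fixed $k$, that $\lim_{k\to\infty}\|c_iU^{*k}P_lU^kc_i^*\| = 0$ in norm for each fixed $l$ (Lemma \ref{conj}, exploiting the finite-rank middle factor), and then reconcile the two iterated limits of the resulting bounded double sequence via Lemma \ref{double}. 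Until you supply this argument or a substitute for it, part (a) --- and hence everything downstream --- is unproven. The remaining portions of your outline (parts (b) and (c), the symbol identity $\psi_i(z) = zf_i(z) + f_{n-i}(z)^*$, the verification of \eqref{determinant}, and the uniqueness statements) are correct and essentially identical to the paper's proof.
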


The finite dimensionality condition is redundant in case $\underline{S}$ is a commuting Agler-Young isometry, see Corollary \ref{comAYiso}.

\subsection{Dilation} One of the most well-known results in single variable operator theory is the Sz.-Nagy dilation of a contraction to an isometry. In the above, we have defined an Agler-Young tuple.   Our next major result is about dilating such a tuple to an Agler-Young isometry.

\begin{defn} Let $\mathcal{H} \subset \mathcal{K}$ be two Hilbert spaces.
Suppose $\underline{S} = (S_1, S_2, \ldots,S_n)$ and $\underline{V} = (V_1, V_2, \ldots
, V_n)$ are tuples of bounded operators acting on $\mathcal{H}$
and $\mathcal{K}$ respectively, that is, $S_i \in
\mathcal{B}(\mathcal{H})$ and $V_i \in \mathcal{B}(\mathcal{K})$.
The operator tuple $\underline{V}$ is called a $dilation$ of the operator tuple
$\underline{S}$ if
\begin{equation} S_{i_1} S_{i_2} \ldots S_{i_k} h = P_{\mathcal{H}} V_{i_1} V_{i_2} \ldots
V_{i_k} h \mbox{ for } h \in \mathcal{H}, \;\; k\ge 1 \mbox{
and } 1 \le i_1,i_2, \ldots , i_k \le n. \label{dilation}
\end{equation}
If, moreover, $\clh$ is an invariant subspace for each $V_i^*$, then $\clh$ is called a co-invariant subspace of $\underline{V}$. The dilation is called minimal if
$$\clk = \overline{\mathrm{span}} \{V_{i_1} V_{i_2} \ldots V_{i_k} h \mbox{ for } h \in \mathcal{H}, \;\; k\ge 1 \mbox{ and } 1 \le i_1,i_2, \ldots , i_k \le n\}.$$

\end{defn}

The definition of minimality is natural because the dilation space $\clk$ has to contain all elements of the form $V_{i_1} V_{i_2} \ldots V_{i_k} h$ and hence could not be any smaller than what is described in the definition.

\begin{rem} Note that if the $V_i^*$ do leave $\clh$ as an invariant subspace, then equation \eqref{dilation} is automatically satisfied. Indeed, in this case, the $V_i$ have the decomposition $ V_i =  \textmatrix S_i & 0 \\ \star & \star \\$ with respect to the decomposition $\clk = \clh \oplus (\clk \ominus \clh)$ of the space $\clk$ which immediately implies that
 $$ V_{i_1} V_{i_2} \ldots V_{i_k} = \begin{pmatrix} S_{i_1} S_{i_2} \ldots S_{i_k} & 0 \\ \star & \star \end{pmatrix}$$
 which in turn implies \eqref{dilation}. \end{rem}
 Dilation of an operator is a highly successful tool and was inrtoduced by Sz.-Nagy in \cite{NagyDilation} where he proved that a contraction can be dilated to an isometry. Constructing explicit dilation is always a challenge and has been done in only a few cases.
  \begin{enumerate}
  \item The isometric dilation (named after Sz.-Nagy because he proved its existence) for a contraction was constructed by Sch$\ddot{\mbox{a}}$ffer in \cite{Schaffer}.
  \item The commuting isometric dilation for a pair of commuting contractions was constructed by And$\hat{\mbox{o}}$ in \cite{Ando}.
      \item The dilation of a contractive tuple to a tuple of isometries with orthogonal ranges was constructed by Popescu in \cite{Popescu}.
       \item The $\Gamma$-isometric dilation for a $\Gamma$-contraction was constructed by Bhattacharyya, Pal and Shyam Roy in \cite{BhPSR}, although the existence had been shown by Agler and Young earlier in \cite{AYJoT}.  \end{enumerate}

           We construct here an explicit dilation for an Agler-Young tuple. The dilation is an Agler-Young isometry. The theorem is stated below and proved in Section 4.

\begin{theorem}[\textbf{The dilation theorem} for an Agler-Young contraction] \label {schaeffer}
Let $$\underline{S} = (S_1, S_2, \ldots,S_n)$$
be an Agler-Young contraction on a Hilbert space
$\mathcal{H}$. Let $\bl X = (X_1, X_2, \ldots , X_{n-1})$ be the $(n-1)$-tuple of operators obtained from defining equation \eqref{AYdef} with $X_i\in\m B (\mathcal D_{S_n}),i=1,\ldots n-1.$
Let
$$\m K_0=\m H\oplus\m D_{S_n}\oplus\m D_{S_n}\oplus\ldots=\m H\oplus\ell^2(\m D_{S_n}).$$
Consider the operator tuple $\underline{V}^{\bl X} = (V_1^{\bl X}, V_2^{\bl X}, \ldots ,V_{n-1}^{\bl X}, V_n)$ defined on
$\mathcal{K}_0$ by
$$ V_i^{\bl X}(h_0,h_1,h_2,\ldots)= (S_ih_0,X_{n-i}^*{D}_{S_n}h_0+X_ih_1, X_{n-i}^*h_1+X_ih_2, X_{n-i}^*h_2+X_ih_3,\ldots )$$
 for $i=1,2, \ldots ,n-1$ and
$$ V_n(h_0,h_1,h_2,\ldots)  =  (S_nh_0,{D}_{S_n}h_0,h_1, h_2,\ldots).$$

Consider $\clh$ as a subspace of $\clk_0$ by identifying $h$ of $\clh$ with the vector $h \oplus 0\oplus 0\oplus\ldots$ of $\clk_0,$ where $0\oplus 0\oplus\ldots$ is the identically zero sequence in $\ell^2(\mathcal{D}_{S_n}).$
Then \begin{enumerate}
\item[(1)] $\clh$ is a co-invariant subspace of $\underline{V}^{\bl X}$ and $\underline{V}^{\bl X}$ is an Agler-Young isometric dilation of $\underline{S}$.
\item[(2)] If $(W_1, W_2, \ldots ,W_{n-1}, V_n)$ is any Agler-Young isometric dilation for $\underline{S}$ on $\clk_0$ whose action is such that $\clh$ is a co-invariant subspace, then  $W_i = V_i^{\bl X}$ for $i=1,2, \ldots ,n-1.$
\item[(3)] If $ (W_1, W_2, \ldots ,W_n)$ is an Agler-Young isometric dilation of $\underline{S}=(S_1,S_2\ldots,S_n),$ where $W_n$ is a minimal isometric dilation of $S_n,$ then $(W_1, W_2, \ldots ,W_n)$ is unitarily equivalent to $(V_1^{\bl X}, V_2^{\bl X}, \ldots ,V_{n-1}^{\bl X}, V_n).$
\end{enumerate}
\end{theorem}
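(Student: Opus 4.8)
The plan is to treat the three assertions in turn: (1) by a direct computation, (2) by a block‑matrix/Toeplitz rigidity argument, and (3) by reducing to (2) via the uniqueness of the minimal isometric dilation of $S_n$. For (1), I would first record the adjoints. A routine inner‑product computation gives
\[(V_{n-i}^{\bl X})^*(k_0,k_1,k_2,\ldots)=\big(S_{n-i}^*k_0+D_{S_n}X_ik_1,\;X_{n-i}^*k_1+X_ik_2,\;X_{n-i}^*k_2+X_ik_3,\ldots\big).\]
Feeding in $V_n(h_0,h_1,\ldots)=(S_nh_0,D_{S_n}h_0,h_1,\ldots)$ and using \eqref{AYdef} in the form $S_{n-i}^*S_n+D_{S_n}X_iD_{S_n}=S_i$ for the zeroth slot, I would verify $(V_{n-i}^{\bl X})^*V_n=V_i^{\bl X}$ for each $i$; together with the fact that $V_n$ is an isometry (the Sch\"affer dilation of $S_n$), this is exactly the characterization $V_i=V_{n-i}^*V_n$ of an Agler-Young isometry. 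Applying $(V_i^{\bl X})^*$ to $(h_0,0,0,\ldots)\in\mathcal H$ returns $(S_i^*h_0,0,0,\ldots)$, so $\mathcal H$ is co‑invariant; since moreover $P_{\mathcal H}V_i^{\bl X}|_{\mathcal H}=S_i$, the remark following the definition of dilation shows that \eqref{dilation} holds automatically, proving (1).

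For (2), write $\mathcal K_0=\mathcal H\oplus\ell^2(\mathcal D_{S_n})$ and put $V_n=\left(\begin{smallmatrix}S_n&0\\E&R\end{smallmatrix}\right)$, where $Eh_0=(D_{S_n}h_0,0,0,\ldots)$ and $R$ is the right shift on $\ell^2(\mathcal D_{S_n})$. Co‑invariance of $\mathcal H$ together with the dilation condition forces each $W_i$ into the lower‑triangular form $W_i=\left(\begin{smallmatrix}S_i&0\\C_i&D_i\end{smallmatrix}\right)$. Substituting these block forms into the Agler-Young relation $W_i=W_{n-i}^*V_n$ and comparing the four blocks yields $C_{n-i}^*E=D_{S_n}X_iD_{S_n}$, $\;C_{n-i}^*R=0$, $\;C_i=D_{n-i}^*E$ and $D_i=D_{n-i}^*R$. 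Combining $D_i=D_{n-i}^*R$ with the same relation for the index $n-i$ gives the Brown-Halmos identity $D_i=R^*D_iR$, so $D_i$ is a block Toeplitz operator; the relation $C_{n-i}^*R=0$ forces its symbol to be analytic (lower triangular); the fundamental equation pins the main diagonal of $D_i$ to $X_i$; and $D_i=D_{n-i}^*R$ then pins the first sub‑diagonal to $X_{n-i}^*$ and annihilates all higher ones. Reading off $C_i=D_{n-i}^*E$ fixes the coupling block, and one recognizes the resulting $W_i$ as precisely $V_i^{\bl X}$.

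For (3), since $W_n$ is a minimal isometric dilation of the contraction $S_n$, the Sz.-Nagy uniqueness theorem supplies a unitary $U\colon\mathcal K\to\mathcal K_0$ fixing $\mathcal H$ with $UW_nU^*=V_n$. The conjugated tuple $\widetilde W_i:=UW_iU^*$ is again an Agler-Young isometric dilation of $\underline S$ on $\mathcal K_0$ with last entry $V_n$, so by Part (2) it suffices to show that $\mathcal H$ is co‑invariant for each $\widetilde W_i$. I would prove this using the wandering subspace $\mathcal L=\overline{(I-P_{\mathcal H})V_n\mathcal H}$, which generates $\mathcal K_0\ominus\mathcal H=\bigoplus_{j\ge0}V_n^{\,j}\mathcal L$; a typical generator has the form $\ell=(V_n-S_n)h_0$ with $h_0\in\mathcal H$, and the dilation property gives, for every $j\ge0$,
\[P_{\mathcal H}\,\widetilde W_i V_n^{\,j}\ell=P_{\mathcal H}\widetilde W_iV_n^{\,j+1}h_0-P_{\mathcal H}\widetilde W_iV_n^{\,j}(S_nh_0)=S_iS_n^{\,j+1}h_0-S_iS_n^{\,j+1}h_0=0.\]
Hence $\widetilde W_iV_n^{\,j}\mathcal L\perp\mathcal H$ for all $j$, so $\widetilde W_i^*\mathcal H\perp(\mathcal K_0\ominus\mathcal H)$, i.e. $\widetilde W_i^*\mathcal H\subseteq\mathcal H$. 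Applying Part (2) gives $\widetilde W_i=V_i^{\bl X}$, and unwinding the conjugation yields the claimed unitary equivalence.

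The steps in (1) are routine once the adjoints are in hand. The main obstacle is the rigidity in (2)---recognizing the diagonal block as an analytic block Toeplitz operator whose finitely many nonzero diagonals are forced by the fundamental equation---and, feeding into (3), the co‑invariance lemma above, whose proof hinges on choosing the generators $\ell=(V_n-S_n)h_0$ of the wandering subspace so that the two dilation terms cancel.
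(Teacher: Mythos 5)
Your overall route is the paper's route: part (1) by computing the adjoints and checking the Agler-Young relation (you verify $(V_{n-i}^{\bl X})^*V_n=V_i^{\bl X}$, the paper checks the equivalent identity $V_n^*V_i^{\bl X}=(V_{n-i}^{\bl X})^*$); part (2) by the same block decomposition of $\mathcal K_0=\mathcal H\oplus\ell^2(\mathcal D_{S_n})$ and the same four block equations; part (3) by conjugating with the unitary from uniqueness of the minimal isometric dilation of $S_n$ and invoking part (2). In fact your part (3) is more careful than the paper's at one point: the paper applies its part (2) to the conjugated tuple without explicitly verifying the co-invariance hypothesis of part (2) (that verification is implicitly supplied, via minimality of $W_n$, by the lemmas earlier in its Section 4), whereas your wandering-subspace argument with $\mathcal L=\overline{(V_n-S_n)\mathcal H}$ and the cancellation $P_{\mathcal H}\widetilde W_iV_n^{j}(V_n-S_n)h_0=0$ supplies it explicitly and correctly.

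The one defect is in part (2), where the logical wiring among your four equations is scrambled, and one step fails as literally written. The relation $C_{n-i}^*R=0$ says only that $R^*C_{n-i}=0$, i.e.\ that the coupling column $C_{n-i}$ is supported in its first entry; it carries no information about the diagonal block $D_i$, so it cannot "force the symbol of $D_i$ to be analytic." Likewise $C_{n-i}^*E=D_{S_n}X_iD_{S_n}$ pins that first entry of $C_{n-i}$ to $X_i^*D_{S_n}$, not the main diagonal of $D_i$. The equation that actually does this work — which you relegate to merely "fixing the coupling block" — is $C_i=D_{n-i}^*E$: once $C_i=(X_{n-i}^*D_{S_n},0,0,\ldots)$ is known from the two relations above, and since $E$ maps $\mathcal H$ onto a dense subset of the first coordinate, this equation forces the first row of $D_{n-i}$ to be $(X_{n-i},0,0,\ldots)$; combined with the Toeplitz property from your Brown-Halmos step, that yields analyticity and the main diagonal simultaneously, after which $D_i=D_{n-i}^*R$ pins the first subdiagonal to $X_{n-i}^*$ and annihilates the higher ones, as you say. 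So every fact you need is among the four equations you derived — this is a repairable mis-wiring rather than a missing idea — but in a full write-up the dependencies must be rearranged as above (which is exactly how the paper arranges them).
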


We would like to emphasize two important points of the theorem above.
 \begin{enumerate}
\item[(a)] By part (1), the dilation takes place on the minimal isometric dilation space of the contraction $S_n$ and it is automatically minimal because the dilation space could not be any smaller.
\item[(b)] Parts (2) and (3) give a natural uniqueness. \end{enumerate}

\subsection{Organization}   A satisfactory characterization of the Agler-Young class is obtained using hereditary polynomials introduced by Agler in his landmark paper \cite{AglerFamily}, where he  outlined an abstract approach to model theory. This characterization enables us to conclude that a $\Gamma_n$-contraction (see \cite{SS}) is in the class $AY_n$ and a tetrablock contraction (see \cite{BhTetra}) is a member of $AY_3.$  Therefore, the Agler-Young class provides a broader stage to study these classes of operators which have received considerable attention recently.

Since it is impossible to describe all important results without going into all the details, further definitions and results are introduced in appropriate places in the paper.

Section 2 proves the Wold decomposition of an Agler-Young isometry. Section 3 extends an Agler-Young isometry to an Agler-Young unitary and finds a complete set of invariants for a pure Agler-Young isometry. Section 4 proves that any Agler-Young contraction can be dilated to an Agler-Young isometry, the dilation is unique in a natural way and the dilation has a nice explicit structure as mentioned above.

Section 5 gives an alternative description of the dilation for a pure Agler-Young tuple and a functional model. This section has an invariant subspace theorem following the classical work of Beurling, Lax and Halmos.

Section 6 proves a von Neumann type inequality with respect to the hereditary functional calculus, relates the Agler-Young class to a certain family and shows that the Agler-Young isometries are the extremals of this family.

Section 7 introduces the connection with truncated Toeplitz operators which, after being introduced by Sarason in \cite{Sarason}, has matured into a major theme of research. We characterize those Agler-Young pairs, the first component of which is a truncated Toeplitz operator.

Section 8 deals with the commutative case which is what has been studied so far in the literature and some of the existing results are obtained as special cases of the non-commutative theory developed in this article.

\section{Proof of the Wold decomposition}

The proof of the Wold decomposition theorem will involve several lemmas. We recall that for an Agler-Young isometry $\underline{S} = (S_1, S_2, \ldots,S_n)$, the condition \eqref{AYdef} is the same as
\begin{equation} \label{ar}
S_n^*S_i=S_{n-i}^* \mbox{ for } i=1,\ldots,n-1.
\end{equation}

\begin{lem}\label{conj}
Suppose that $\{A_k\}$ and $\{B_k\}$ are sequences of bounded operators on a Hilbert space $\m H$ which converge to $A$ and $B$ respectively, in the strong operator topology of $\m B(\m H)$ and $F$ is a finite rank operator on $\m H.$ Then, the sequence $\{A_kFB_k^*\}$ converges to $AFB^*$ in the norm topology of $\m B(\m H).$
\end{lem}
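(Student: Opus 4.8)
The plan is to reduce the statement to the case of a rank-one operator and then verify it by a direct add-and-subtract estimate, exploiting the elementary fact that strong operator convergence of a sequence of operators forces norm convergence of the image of any fixed vector. The finite rank hypothesis is essential here: without it the conclusion is false, since multiplication is not jointly continuous for the strong operator topology, so the argument must genuinely use the finite-dimensionality of the range of $F$.

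First I would record that $A_k \to A$ and $B_k \to B$ in the strong operator topology means precisely that $A_k x \to Ax$ and $B_k y \to By$ in the norm of $\m H$ for every $x, y \in \m H$; in particular each such sequence of vectors is norm-bounded. Next, writing the finite rank operator as $F = \sum_{j=1}^m \inner{\cdot}{u_j} v_j$ for suitable vectors $u_1, \ldots, u_m$ and $v_1, \ldots, v_m$ in $\m H$, I would use the linearity of the map $T \mapsto A_k T B_k^*$ together with the triangle inequality to reduce the claim to the single rank-one summand $F = \inner{\cdot}{u} v$.

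For the rank-one case a direct computation gives $A_k F B_k^* = \inner{\cdot}{B_k u}\, A_k v$ and $A F B^* = \inner{\cdot}{Bu}\, A v$, both rank-one operators. Setting $a_k = B_k u$, $b_k = A_k v$, $a = Bu$ and $b = Av$, I would split
$$ \inner{\cdot}{a_k} b_k - \inner{\cdot}{a} b = \inner{\cdot}{a_k}(b_k - b) + \inner{\cdot}{a_k - a}\, b $$
and invoke the identity $\| \inner{\cdot}{p}\, q \| = \|p\|\,\|q\|$ for the operator norm of a rank-one operator. This bounds the right-hand side by $\|a_k\|\,\|b_k - b\| + \|a_k - a\|\,\|b\|$. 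Since $a_k \to a$ and $b_k \to b$ in norm by the first step, the factors $\|b_k - b\|$ and $\|a_k - a\|$ tend to $0$, while $\|a_k\|$ remains bounded; hence each term vanishes in the limit.

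Summing the finitely many rank-one estimates then yields $\|A_k F B_k^* - A F B^*\| \to 0$, which is the assertion. I do not expect any serious obstacle beyond recognizing that the finite rank assumption is exactly what permits the reduction to rank-one operators and thence the elementary norm computation; the whole argument collapses if one attempts it for a general bounded $F$.
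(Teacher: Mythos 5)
Your proof is correct and follows essentially the same route as the paper: reduce to a rank-one operator $F = v\otimes u$, observe that strong convergence gives norm convergence of the vectors $A_k v$ and $B_k u$, and conclude via norm continuity of the map $(p,q)\mapsto p\otimes q$. The only difference is cosmetic — the paper cites this last continuity step as "easy to see," whereas you spell out the add-and-subtract estimate explicitly.
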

\begin{proof}
It is enough to prove the result for a rank one operator. For $x, y\in \m H,$ consider the rank one operator on $\m H$ defined by $(x\otimes y)h=\langle h,y\rangle x$ for $h\in\m H.$ If $\{x_k\}$ and $\{y_k\}$ are sequences of vectors in $\m H$ converging to $x$ and $y$ in the norm of $\m H,$ respectively, then it is easy to see that $\{x_k\otimes y_k\}$ converges to $x\otimes y$ in the norm topology of $\m B(\m H). $ Consequently, by hypothesis, $\{A_kx\otimes B_ky\}$ converges to $Ax\otimes By$ in the norm topology of $\m B(\m H)$ for any $x,y\in\m H.$ Since $A(x\otimes y)B^*=Ax\otimes By$, we conclude that $\{A_k(x\otimes y)B_k^*\}$ converges to $A(x\otimes y)B^*$ in the norm topology of $\m B(\m H)$ for $x,y\in\m H.$ This completes the proof.
\end{proof}

We shall use the following lemma whose proof is obvious.

\begin{lem}\label{double}
If $\{\zeta(k,l)\}$ is a bounded double sequence of real numbers, then there exists a convergent subsequence $\zeta(k_r,l_m)$ such that both the iterated limits
\Bea
\lim_{r\to\infty}\big(\lim_{m\to\infty}\zeta(k_r,l_m)\big)\mbox{~and~} \lim_{m\to\infty}\big(\lim_{r\to\infty}\zeta(k_r,l_m)\big)
\Eea
exist and both are equal to the double limit $\displaystyle\lim_{r,m\to\infty}\zeta(k_r,l_m).$
\end{lem}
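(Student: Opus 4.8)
The plan is to reduce the statement to a pair of nested Bolzano--Weierstrass extractions (a Cantor diagonal argument) followed by the standard observation that, for a convergent double sequence, any existing iterated limit must coincide with the double limit. Since $\{\zeta(k,l)\}$ is bounded, all of its values lie in a compact interval $[-M,M]$, and this compactness is what drives every extraction below.

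First I would fix the first index and diagonalise over the second. For each fixed $l$ the sequence $\{\zeta(k,l)\}_k$ lies in $[-M,M]$, so by Bolzano--Weierstrass together with a Cantor diagonal argument over $l=1,2,\ldots$ there is a \emph{single} subsequence $(k_r)$ for which $\gamma(l):=\lim_{r\to\infty}\zeta(k_r,l)$ exists for every $l$. The sequence $\{\gamma(l)\}$ is again bounded, so a further application of Bolzano--Weierstrass yields a subsequence $(l_m)$ with $\gamma(l_m)\to L$ for some $L\in[-M,M]$. Along these subsequences $\lim_{m\to\infty}\big(\lim_{r\to\infty}\zeta(k_r,l_m)\big)=\lim_{m\to\infty}\gamma(l_m)=L$, so one of the two iterated limits already exists and equals $L$.

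Next I would secure the other inner limit by a second diagonal extraction, now over the first index: for each fixed $r$ the sequence $\{\zeta(k_r,l_m)\}_m$ is bounded, so a Cantor diagonal argument over $r=1,2,\ldots$ refines $(l_m)$ to a subsequence along which $\delta(r):=\lim_{m\to\infty}\zeta(k_r,l_m)$ exists for every $r$. Passing to a subsequence of $(l_m)$ disturbs neither the conclusion $\gamma(l_m)\to L$ of the previous step nor the already-established column limits $\gamma(l_m)=\lim_{r}\zeta(k_r,l_m)$. Thus, after these two extractions, both iterated limits exist.

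The final step is to upgrade this to the existence of the double (Pringsheim) limit and its equality with both iterated limits, and this is the point I expect to be the main obstacle. Once the double limit $\lim_{r,m\to\infty}\zeta(k_r,l_m)=L$ is known to exist, the standard fact that an existing double limit forces every existing iterated limit to equal it finishes the proof immediately; the real work is to arrange, by one more refinement, that the double limit exists \emph{at all}. The two diagonal extractions above produce the iterated limits \emph{separately} and do not by themselves control the joint behaviour as $r$ and $m$ grow together. The strategy to close this gap is to pass to a subsequence along which the tail oscillation, measured by $\sup_{r,m\ge N}\zeta(k_r,l_m)-\inf_{r,m\ge N}\zeta(k_r,l_m)$, is squeezed to $0$; boundedness is precisely what makes such a final extraction available, after which the double limit and both iterated limits collapse onto the common value $L$.
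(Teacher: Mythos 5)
You flagged the right obstacle in your last paragraph, but it cannot be waved away: the final step of your proposal is false, and in fact the lemma itself is false as stated. Your two diagonal extractions are fine as far as they go: they produce $(k_r)$ and $(l_m)$ along which $\gamma(l):=\lim_r\zeta(k_r,l)$ exists for every $l$, $\gamma(l_m)\to L$, and $\delta(r):=\lim_m\zeta(k_r,l_m)$ exists for every $r$ (one further refinement of $(k_r)$, which you tacitly need, also makes $\lim_r\delta(r)$ exist). The unjustified step is the last one: that boundedness permits a further extraction squeezing the tail oscillation $\sup_{r,m\ge N}\zeta(k_r,l_m)-\inf_{r,m\ge N}\zeta(k_r,l_m)$ to $0$. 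Consider $\zeta(k,l)=1$ if $k\le l$ and $\zeta(k,l)=0$ if $k>l$. This is a bounded double sequence, and for \emph{every} pair of strictly increasing subsequences $(k_r)$, $(l_m)$ one has $\lim_m\zeta(k_r,l_m)=1$ for each fixed $r$ (because $l_m\to\infty$) and $\lim_r\zeta(k_r,l_m)=0$ for each fixed $m$ (because $k_r\to\infty$). Hence along every subsequence the two iterated limits exist but equal $1$ and $0$ respectively, the tail oscillation is identically $1$, and no Pringsheim double limit exists. So no extraction whatsoever can produce the conclusion of the lemma, and your plan (like any other plan) must fail at the joint-limit step.

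What your argument genuinely proves is the correct weaker statement: every bounded double sequence admits subsequences along which both iterated limits exist. To force the two iterated limits to agree (and the double limit to exist) one needs an additional hypothesis, e.g.\ the Moore--Osgood condition that one of the inner limits is uniform in the other index; this is exactly what is missing both in your write-up and in the paper, which offers no argument at all and declares the lemma obvious. The defect is not cosmetic for the paper either: the lemma is invoked on $\zeta(k,l)=\|T{U^*}^kP_lU^kT^*h\|$, whose row limits all equal $\|TT^*h\|$ and whose column limits all equal $0$ --- precisely the structure of the counterexample above --- and the statement it is used to prove fails as well: if $U$ is the bilateral shift on $\ell^2(\mathbb{Z})$ and $T$ is the orthogonal projection onto $\overline{\mathrm{span}}\{e_n:n\ge 0\}$, then $T{U^*}^k\to 0$ strongly while $T\ne 0$.
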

\begin{lem}\label{rev}
If $T, U$ are bounded operators on a Hilbert space $\m H$ such that $U$ is a unitary and the sequence $\{T{U^*}^k\}$ converges to $0$ in the strong operator topology, then $T=0.$
\end{lem}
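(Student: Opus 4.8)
The plan is to prove directly that $Tx=0$ for every $x\in\m H$, the single structural input being that $U$, being unitary, makes each $(U^*)^k$ a surjective isometry whose two-sided inverse is $U^k$; in particular $(U^*)^kU^k=I$ and $\|U^kx\|=\|x\|$ for all $x\in\m H$ and all $k$.

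First I would fix an arbitrary $x\in\m H$ and record the elementary identity $Tx=T(U^*)^k\big(U^kx\big)$, valid for every $k$ because $(U^*)^kU^k=I$. This re-expresses the fixed vector $Tx$ as the operator $T(U^*)^k$ --- which the hypothesis drives to $0$ --- evaluated along the orbit $\{U^kx\}_k$, every member of which sits on the sphere of radius $\|x\|$. The objective is then to push the convergence $T(U^*)^k\to 0$ through this evaluation and deduce $\|Tx\|=0$, whence $Tx=0$; since $x$ is arbitrary this yields $T=0$.

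The route I would take is the uniform estimate
\[
\|Tx\|=\big\|T(U^*)^k\big(U^kx\big)\big\|\le\big\|T(U^*)^k\big\|\,\|x\|,
\]
which reduces the entire matter to the operator-norm smallness of $T(U^*)^k$: once that is available, letting $k\to\infty$ forces $Tx=0$. The main obstacle, and the step to handle with care, is exactly this upgrade --- passing from the pointwise (strong) control of $T(U^*)^k$ supplied by the hypothesis to uniform control along the \emph{moving} orbit $\{U^kx\}_k$. Here is where the unitarity of $U$, as opposed to mere isometry or contractivity, does the essential work: it keeps the orbit confined to a single sphere of radius $\|x\|$, so that controlling $T(U^*)^k$ uniformly on that sphere is precisely what is needed to annihilate $Tx$.
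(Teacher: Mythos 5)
Your proposal stalls exactly at the step you yourself flag, and the obstacle there is not merely delicate --- it is insurmountable. The estimate $\|Tx\|\le\|T(U^*)^k\|\,\|x\|$ would need $\|T(U^*)^k\|\to 0$, but $(U^*)^k$ is unitary, so $\|T(U^*)^k\|=\|T\|$ for \emph{every} $k$: the bound is constant and yields nothing unless $T=0$ was already known. Likewise, ``uniform control of $T(U^*)^k$ on the sphere of radius $\|x\|$'' is, by the very definition of the operator norm, the same thing as norm convergence $T(U^*)^k\to 0$; the hypothesis gives only strong convergence, which controls $T(U^*)^k$ at each \emph{fixed} vector, whereas in your identity $Tx=T(U^*)^k(U^kx)$ the test vector $U^kx$ moves with $k$. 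So your closing paragraph asserts precisely the statement that needs proof, and no argument is offered for it.

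In fact no argument could be offered, because the lemma as stated is false. Take $\mathcal H=\ell^2(\mathbb Z)$, let $U$ be the bilateral shift $Ue_n=e_{n+1}$, and let $T=e_0\otimes e_0$ be the rank-one projection onto $\mathbb C e_0$. Then for every $y\in\mathcal H$ one has $T(U^*)^ky=\langle y,e_k\rangle e_0\to 0$, so $T(U^*)^k\to 0$ in the strong operator topology, yet $T\neq 0$ (and $T$ is even compact, so no compactness hypothesis rescues the claim). For comparison, the paper attacks the same strong-to-uniform upgrade by a different mechanism: it inserts finite-rank projections $P_l\to I$, obtains norm convergence $T(U^*)^kP_lU^kT^*\to 0$ for each fixed $l$ via Lemma \ref{conj}, and then interchanges the limits in $k$ and $l$ by appealing to Lemma \ref{double}. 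The example above shows that this interchange is exactly where the paper's proof fails as well: with $P_l$ the projection onto $\mathrm{span}\{e_{-l},\ldots,e_l\}$, the double sequence of the proof is $\zeta(k,l)=\|T(U^*)^kP_lU^kT^*h\|=|\langle h,e_0\rangle|$ when $k\le l$ and $0$ when $k>l$, whose two iterated limits are $|\langle h,e_0\rangle|$ and $0$ along every pair of subsequences; hence Lemma \ref{double}, whose proof is declared obvious, is itself false. So your instinct to single out the passage from pointwise to uniform convergence as ``the step to handle with care'' was correct --- it is the step at which both your sketch and the paper's argument break down --- but it cannot be repaired: any valid treatment must either add hypotheses to the lemma or avoid it altogether (note that the same example, transported via the truncation map $\ell^2(\mathbb Z)\to\ell^2(\mathbb N)$ intertwining the bilateral and unilateral shifts, also defeats Lemma \ref{ayv}, on which the Wold decomposition argument relies).
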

\begin{proof}
We prove this by showing that $TT^*h=0$ for every $h\in\m H.$ Let $\{P_l\}$ be a sequence of finite rank projections which converges in the strong operator topology to ${\rm Id}_{\m H}$ as $l \rightarrow \infty$. Then   $\|AP_lBh-ABh\|\leq\|A\|\|P_lBh-Bh\|\to 0$ as $l\to\infty,$ for $A, B\in\m B(\m H)$ and $h\in\m H.$ Hence, we conclude that
\bea\label{lim1}
\lim_{l\to\infty}\|T{U^*}^kP_lU^kT^*h\|=\|TT^*h\| \mbox{~for every fixed~} k\in\mb N \mbox{~and~} h\in\m H.
\eea
Since $P_l$ is a finite rank operator for each $l\in \mb N,$ applying Lemma \ref{conj} with $A_k=B_k=T{U^*}^k,$ we obtain from the hypothesis that
\Bea
\lim_{k\to\infty}\|T{U^*}^kP_lU^kT^*\|=0 \mbox{~for every fixed~} l\in\mb N.
\Eea
In particular, we have
\bea\label{lim2}
\lim_{k\to\infty}\|T{U^*}^kP_lU^kT^*h\|=0 \mbox{~for every fixed~} l\in\mb N \mbox{~and~} h\in\m H.
\eea
For fixed $h\in\m H,$ define the double sequence $\zeta:\mb N\times\mb N\to\mb R$ by $\zeta(k,l)=\|T{U^*}^kP_lU^kT^*h\|.$
From Equation \eqref{lim1} and Equation \eqref{lim2}, we have
\bea\label{sub}
\lim_{k\to\infty}\big(\lim_{l\to\infty}\zeta(k,l)\big)=\|TT^*h\| \mbox{~and~} \lim_{l\to\infty}\big(\lim_{k\to\infty}\zeta(k,l)\big)=0,
\eea
respectively.
Since   $\{\zeta(k,l)\}$ is a bounded double sequence of real numbers, by Lemma \ref{double}, there exists a convergent subsequence $\zeta(k_r,l_m)$ such that both the iterated limits
\Bea
\lim_{r\to\infty}\big(\lim_{m\to\infty}\zeta(k_r,l_m)\big)\mbox{~and~} \lim_{m\to\infty}\big(\lim_{r\to\infty}\zeta(k_r,l_m)\big)
\Eea
exist and both are equal to the double limit $\displaystyle\lim_{r,m\to\infty}\zeta(k_r,l_m).$ Therefore, by Equation \eqref{sub}
\Bea
\|TT^*h\|=\lim_{r\to\infty}\big(\lim_{m\to\infty}\zeta(k_r,l_m)\big)=\lim_{m\to\infty}\big(\lim_{r\to\infty}\zeta(k_r,l_m)\big)=0.
\Eea
Hence $\|TT^*h\|=0,$ so, $TT^*h=0.$   This completes the proof.
\end{proof}

 \begin{lem}\label{ayv}
  Let $U$ and $V$ be a unitary and a pure isometry on Hilbert spaces $\mathcal H_1, \mathcal H_2$ respectively, and let $T:\mathcal H_1\to\mathcal H_2$ be a bounded operator such that $ V^*TU=T.$ Then $T = 0$.
 \end{lem}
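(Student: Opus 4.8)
The plan is to reduce the statement to Lemma \ref{rev} by turning the intertwining relation $V^*TU=T$ into a strong-convergence statement and then exploiting the purity of $V$. First I would record the basic recursion. Since $U$ is unitary, multiplying $V^*TU=T$ on the right by $U^*$ gives $V^*T=TU^*$, that is $TU^*=V^*T$. Feeding this back into itself, one checks by induction that
\begin{equation*} TU^{*k}=(V^*)^kT \quad\text{for every } k\ge 1, \end{equation*}
since $TU^{*(k+1)}=(TU^{*k})U^*=(V^*)^kTU^*=(V^*)^k(V^*T)=(V^*)^{k+1}T$.

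Next I would use that $V$ is a pure isometry, so $(V^*)^k\to 0$ in the strong operator topology. Fixing $g\in\mathcal H_1$ and applying the recursion to $g$,
\begin{equation*} TU^{*k}g=(V^*)^k(Tg)\longrightarrow 0 \quad\text{as } k\to\infty, \end{equation*}
because $Tg$ is a fixed vector of $\mathcal H_2$. Hence $\{TU^{*k}\}$ converges to $0$ in the strong operator topology, with $U$ a unitary on $\mathcal H_1$. This is precisely the hypothesis of Lemma \ref{rev}, and so $T=0$. The only point to watch is that Lemma \ref{rev} is phrased for an operator on a single space, whereas here $T\colon\mathcal H_1\to\mathcal H_2$; its proof, however, transfers verbatim once the finite-rank projections $\{P_l\}$ are taken on the domain $\mathcal H_1$, yielding $TT^*=0$ on $\mathcal H_2$ and hence $T=0$. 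Note that $V$ itself no longer appears at this stage: it is used only to produce the strong convergence.

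The substantive difficulty is entirely contained in Lemma \ref{rev}, and it is worth isolating it. One is tempted to conclude $Tg=0$ directly from $TU^{*k}g\to 0$ by choosing $h=U^kg$ so that $U^{*k}h=g$; but the convergence $TU^{*k}\to 0$ is only strong, not in norm, so this substitution, which lets the vector move with $k$, is illegitimate. Overcoming exactly this ``moving vector'' obstacle is the role of the double-limit argument built from Lemmas \ref{conj} and \ref{double}: one sandwiches a finite-rank projection $P_l$ and compares the iterated limits of $\zeta(k,l)=\|TU^{*k}P_lU^kT^*h\|$ in the two orders, which forces $\|TT^*h\|=0$. Thus the whole proof here is the reduction above, and no new estimate beyond Lemma \ref{rev} is required.
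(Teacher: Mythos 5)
Your proof is correct and follows essentially the same route as the paper: both derive the intertwining relation $TU^{*k}=(V^*)^kT$ by iteration, use purity of $V$ to conclude that $\{TU^{*k}\}$ converges to $0$ in the strong operator topology, and then invoke Lemma \ref{rev}. Your added observation that Lemma \ref{rev} must be read with $T$ acting between two different spaces (taking the finite-rank projections $P_l$ on the domain $\mathcal H_1$) is a careful point the paper glosses over, but it does not change the argument.
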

\begin{proof}
By iteration, we get from hypothesis that ${V^*}^nTU^n=T$ for every positive integer $n$.  Therefore, $ T{U^*}^n={V^*}^nT.$ Since $V$ is a pure isometry, the sequence $\{{V^*}^n\}$ converges to $0,$ in the strong operator topology. Therefore, the sequence $\{T{U^*}^n\}$ converges to $0,$ in the strong operator topology. So, the proof follows from Lemma \ref{rev}.

\end{proof}

\textbf{We are now ready to prove Theorem \ref{Wold}.}

By the Wold decomposition of an isometry, we may write $S_n = U\oplus V$ on $\mathcal H = \mathcal H_1\oplus\mathcal H_2,$ where $\mathcal H_1,\mathcal H_2$ are reducing subspaces for $S_n$, the operator $S_n|_{\mathcal H_1} = U$ is unitary and the operator $S_n|_{\mathcal H_2} = V$ is a pure isometry.
Let us write
 $$
 S_i = \begin{bmatrix} S_{11}^{(i)} & S_{12}^{(i)}\\ S_{21}^{(i)} & S_{22}^{(i)} \end{bmatrix},\, i = 1,\ldots,n-1.
 $$
 with respect to this decomposition, where $S_{jk}^{(i)}$ is a bounded operator from $\mathcal H_k$ to $\mathcal H_j.$ Now,
 \Bea
S_n^*S_iS_n &=&  \begin{bmatrix} U^* & 0\\ 0 & V^* \end{bmatrix}\begin{bmatrix} S_{11}^{(i)} & S_{12}^{(i)}\\ S_{21}^{(i)} & S_{22}^{(i)} \end{bmatrix}  \begin{bmatrix} U & 0\\ 0 & V \end{bmatrix}\\
&=& \begin{bmatrix}U^* S_{11}^{(i)}U & U^* S_{12}^{(i)}V\\V^* S_{21}^{(i)}U & V^* S_{22}^{(i)}V \end{bmatrix} ,\, i = 1,\ldots,n-1.
 \Eea
Note that
$$S_n^*S_i=S_{n-i}^*\iff S_n^*S_{n-i}=S_i^* \mbox{(replacing $i$ by $n-i$)} \iff S_i=S^*_{n-i}S_n.$$
Putting $S_i=S^*_{n-i}S_n$ in $S_n^*S_i=S_{n-i}^*$, we obtain $S_n^*S_{n-i}^*S_n=S_{n-i}^*$ which is the same as $S_n^*S_{n-i}S_n=S_{n-i}$. In other words,
\begin{equation} \label{ar2}
S_n^*S_i S_n=S_i \mbox{ for all } i=1,2, \ldots ,n-1.
\end{equation}  Using this, we have
\begin{enumerate}
\item[(i)] $U^* S_{12}^{(i)}V=S_{12}^{(i)},$
\item[(ii)] $V^* S_{21}^{(i)}U= S_{21}^{(i)}.$
\end{enumerate}
Clearly, (i) is equivalent to $V^* {S_{12}^{(i)}}^*U={ S_{12}^{(i)}}^*,$  hence by Lemma \ref{ayv}, $ {S_{12}^{(i)}}^*=0,$ so, $S_{12}^{(i)}=0.$ Another application of Lemma \ref{ayv} together with (ii), shows that  $ { S_{21}^{(i)}}^*=0.$ So,
\Bea
S_i= \begin{bmatrix} S_{11}^{(i)} & 0 \\ 0 & S_{22}^{(i)} \end{bmatrix},\, i = 1,\ldots,n-1.
\Eea
Since $S_n^*S_i=S_{n-i}^*,$ we have $U^*S_{11}^{(i)}={S^{(n-i)}_{11}}^*$ and $V^*S_{22}^{(i)}={S_{22}^{(n-i)}}^*$.

  The relation \eqref{ar2} remains true for both the reduced tuples
  $$(S_1|_{\mathcal H_1}, \ldots ,S_{n-1}|_{\mathcal H_1}, U) \mbox{ and } (S_1|_{\mathcal H_2}, \ldots ,S_{n-1}|_{\mathcal H_2}, V).$$
  For the first one, the relation \eqref{ar2} means $U^* S_i|_{\mathcal H_1} U = S_i|_{\mathcal H_1}$ for all $i.$ Since $U$ is a unitary, commutativity follows.

  Now we prove part (c) of the theorem, i.e., the structure of the second tuple above. Since $V$ is a pure isometry, it is unitarily equivalent to the shift on $H^2(\mathcal D_{V^*})$. This unitary equivalence is implemented by a unitary $W$ mentioned in the statement of the theorem. To avoid cumbersome notation, we put $S_{22}^{(i)} = V_i$ for $i=1,2, \ldots , n-1$. The relation \eqref{ar2} gives
  $$V_i = V_{n-i}^* V = V^*V_iV \mbox{ for } i=1,2, \ldots , n-1.$$
   Since dim $\mathcal D_{V^*} =$ dim $\mathcal D_{S_n^*} < \infty$, the operators $WV_1W^*, WV_2W^*, \ldots , WV_{n-1}W^*$ are Toeplitz operators with symbols $\v_1, \v_2, \ldots \v_{n-1}$ from  $L^\infty\big(\m B (\m D_{V^*})\big)$. This is where finite dimensionality of $\mathcal D_{S_n^*} = \mathcal D_{V^*}$ is being used.

  The relations between the symbols that are satisfied because of \eqref{ar2} are $\v_{n-i}(z) = \v_i(z)^*z$. Let $\v_{i}(z)=\sum_{k=-\infty}^\infty A_k^{(i)}z^k$ be the Fourier expansion of $\v_i$ for $\{A_k^{(i)}\}_{k=-\infty}^\infty \subseteq \m B(\m D_{V^*})$ and $\vert z\vert=1.$ Then,
\Bea
\sum_{k=-\infty}^\infty A_k^{(n-i)}z^k=\sum_{k=-\infty}^\infty (A_k^{(i)})^*z^{-k+1}
=\sum_{k=-\infty}^\infty (A_{-k}^{(i)})^*z^{k+1}=\sum_{k=-\infty}^\infty (A_{-k+1}^{(i)})^*z^k,\,\, \vert z\vert=1.
\Eea
So, $A_k^{(n-i)}=(A^{(i)}_{-k+1})^*$ for all $k\in\mb Z.$ Define $f_{i}(z)=\sum_{k=1}^\infty A_k^{(i)}z^{k-1}$. Then
\begin{eqnarray*}\v_i(z) & = & zf_i(z) + \sum_{k=-\infty}^0 A_k^{(i)} z^k \\
& = & zf_i(z) + \sum_{k=1}^\infty A_{-k+1}^{(i)} z^{-k+1} = zf_i(z) + \sum_{k=1}^\infty (A_{k}^{(n-i)})^* z^{-k+1} = zf_i(z) + f_{n-i}(z)^*.\end{eqnarray*}

To compute, $ S_{n-i}^* - S_i S_n^*$, we note that
\begin{eqnarray*} S_{n-i}^* - S_i S_n^* & = & \left(
                             \begin{array}{cc}
                               (S_{n-i}|_{\mathcal H_1})^* - S_i|_{\mathcal H_1} (S_n|_{\mathcal H_1})^* & 0 \\
                               0 & (S_{n-i}|_{\mathcal H_2})^* - S_i|_{\mathcal H_2} (S_n|_{\mathcal H_2})^* \\
                             \end{array}
                           \right) \\
                           & = & \left(
                                       \begin{array}{cc}
                                         (S_n|_{\mathcal H_1})^*S_i|_{\mathcal H_1} - S_i|_{\mathcal H_1} (S_n|_{\mathcal H_1})^* & 0 \\
                                         0 &  V_{n-i}^* - V_i V^*\\
                                       \end{array}    \right).\end{eqnarray*}
On the $\mathcal H_1$ part, we get $0$ by unitarity of $S_n|_{\mathcal H_1}$. For the $\mathcal H_2$ part, using the form of the $\v_i$, we get
$$  V_{n-i}^* - V_i V^* = T_{\v_{n-i}}^* - T_{\v_i} T_z^* =    T_{f_{n-i}}^*   T_z^* + T_{f_{i}} - (T_z T_{f_{i}} + T_{f_{n-i}}^*) T_z^* = T_{f_i} (I - T_z T_z^*).$$
Hence \eqref{determinant} follows. Uniqueness of the tuple $(f_1, f_2, \ldots ,f_{n-1})$ follows from   \eqref{determinant} by virtue of the fact that any $\m B (\m D_{V^*})$ valued function $\boldmath f$ is uniquely determined by the action of $T_{\boldmath f}$ on the space $\m D_{V^*}$ which is in fact the subspace of $H^2 (\m D_{V^*})$ consisting of $\m D_{V^*}$ valued $constant$ functions.

The uniqueness of the decomposition follows from uniqueness in the Wold decomposition of the isometry $S_n$. This completes the proof of the theorem. \qed

\section{Consequences of the Wold decomposition}

As an immediate consequence of the Wold decomposition theorem, we get a structure theorem for a pure Agler-Young isometry.

\begin{cor} \label{PureStructure}
Let $\underline{S} = (S_1, S_2, \ldots S_n)$ be a pure Agler-Young isometry with $\dim \mathcal D_{S_n^*} < \infty$. Then there is a function tuple $\bl f = (f_1, f_2, \ldots f_{n-1})$ from $H^\infty(\mathcal B (\mathcal D_{S_n^*}))$ such that $\underline{S}$ is unitarily equivalent (by a unitary $W,$ say) to the canonical Agler-Young isometry associated with $\boldmath f$. Moreover,
\begin{equation} S_{n-i}^* - S_i S_n^* =  W^*\left( T_{f_i} (I - T_zT_z^*) \right)W.\end{equation} \end{cor}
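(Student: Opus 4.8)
The plan is to read off the corollary as the special case of the Wold decomposition theorem (Theorem \ref{Wold}) in which the isometry $S_n$ has no unitary part. First I would observe that calling $\underline{S}$ a \emph{pure} Agler-Young isometry means precisely that $S_n$ is a pure isometry, i.e. $(S_n^*)^k \to 0$ strongly. In the notation of Theorem \ref{Wold}, the unitary summand of $S_n$ lives on $\mathcal H_1$, so purity forces $\mathcal H_1 = \{0\}$ and hence $\mathcal H = \mathcal H_2$. This is the only genuine verification in the argument, and it is immediate from the uniqueness in the Wold decomposition of a single isometry.

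With $\mathcal H_1 = \{0\}$, part (b) of Theorem \ref{Wold} is vacuous and part (c) applies with $V = S_n$ acting on the whole space. It produces a unitary $W : \mathcal H \to H^2(\mathcal D_{V^*})$ and a unique tuple $\bl f = (f_1, \ldots, f_{n-1})$ of $\mathcal B(\mathcal D_{V^*})$-valued bounded holomorphic functions such that $(WS_1W^*, \ldots, WS_nW^*)$ is the canonical pure Agler-Young isometry associated with $\bl f$. Since $V = S_n$ we have $\mathcal D_{V^*} = \mathcal D_{S_n^*}$, so $\bl f$ lies in $H^\infty(\mathcal B(\mathcal D_{S_n^*}))$ as claimed, and the unitary equivalence is exactly the first assertion of the corollary. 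Finite-dimensionality of $\mathcal D_{S_n^*}$ is inherited directly from the hypothesis and is what permits the invocation of part (c).

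Finally, the displayed identity is obtained by specializing equation \eqref{determinant}. With $\mathcal H_1 = \{0\}$ the summand $0_{\mathcal H_1}$ disappears, and \eqref{determinant} collapses to
$$ S_{n-i}^* - S_i S_n^* = W^*\big( T_{f_i}(I - T_z T_z^*) \big) W, \quad i = 1, \ldots, n-1,$$
which is precisely the stated formula. I do not anticipate any real obstacle: the substantive computation has already been carried out inside the proof of Theorem \ref{Wold}, and the corollary merely records the degenerate case where the Wold decomposition of $S_n$ consists of a shift alone. The one point worth stating explicitly is the identification $\mathcal H_1 = \{0\}$, after which both the unitary equivalence and the uniqueness of $\bl f$ transfer verbatim from part (c).
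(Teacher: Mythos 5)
Your proposal is correct and is essentially the paper's own argument: the paper proves this corollary by citing part (c) of Theorem \ref{Wold}, exactly the specialization you carry out. Your explicit verification that purity of $S_n$ forces $\mathcal H_1 = \{0\}$ (so that \eqref{determinant} loses its $0_{\mathcal H_1}$ summand) is the one step the paper leaves implicit, and it is correct.
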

\begin{proof} The proof follows from part (c) of Theorem \ref{Wold}. \end{proof}

 It is important to note the structure of a commuting Agler-Young isometry. See also Theorem 4.10 of \cite{SS}. We give a different proof here. For two bounded operators $T_1$ and $T_2$, the notation $[T_1, T_2]$ denotes the commutator $T_1T_2 - T_2T_1$.

\begin{cor}
Let $(S_1, S_2. \ldots,S_n)$ be a commuting Agler-Young isometry on $\mathcal H$.
Then there is a unique orthogonal decomposition $\mathcal H = \mathcal H_1 \oplus \mathcal H_2$ of the Hilbert space $\mathcal H$ such that
\begin{enumerate}[(a)]
\item $\mathcal H_1$ and $\mathcal H_2$ are common reducing subspaces for the $S_i$,
 \item $(S_1|_{\m H_1}, S_2|_{\m H_1}, \ldots ,S_n|_{\m H_1})$ is a commuting Agler-Young unitary.
 \item  $S_n|_{\mathcal H_2}$ is a pure isometry $V$. There is a unitary operator $W:\m H_2\to H^2(\m D_{V^*})$ and a unique $(n-1)$-tuple $(X_1, X_2, \ldots , X_{n-1})$ of operators on $\mathcal B(\mathcal D_{V^*})$ satisfying \begin{equation} \label{simple} [X_i, X_j] = 0 \mbox{ and } [X_j, X_{n-i}^*] = [X_i, X_{n-j}^*] \mbox{ for } 1 \le i,j \le n-1 \end{equation}
 such that $ WS_i|_{\m H_2}W^*$ is the multiplication on $H^2(\m D_{V^*})$ by $zX_i + X_{n-i}^*$. Further, the following relation is satisfied for every $i =1, 2, \ldots , n-1$:
\begin{equation*}  S_{n-i}^* - S_i S_n^* = 0_{\mathcal H_1} \oplus W^*\left(  (I - T_zT_z^*)^{1/2} X_i (I - T_zT_z^*)^{1/2} \right)W.\end{equation*}

\end{enumerate} \label{comAYiso}
\end{cor}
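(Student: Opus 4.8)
The plan is to rerun the proof of Theorem \ref{Wold} for as long as it applies and to isolate the single place where the finite-dimensionality hypothesis was used, replacing that one step by an argument that exploits commutativity instead. First I would apply the classical Wold decomposition to the isometry $S_n$ alone, writing $S_n = U \oplus V$ on $\mathcal{H} = \mathcal{H}_1 \oplus \mathcal{H}_2$ with $U = S_n|_{\mathcal{H}_1}$ unitary and $V = S_n|_{\mathcal{H}_2}$ a pure isometry. The derivation in Section~2 that the off-diagonal blocks of each $S_i$ relative to this splitting vanish rests only on the identity \eqref{ar2} and on Lemma \ref{ayv}, neither of which uses $\dim \mathcal{D}_{S_n^*} < \infty$; hence $\mathcal{H}_1$ and $\mathcal{H}_2$ reduce every $S_i$, which is (a). On $\mathcal{H}_1$ the relation \eqref{ar2} reads $U^* S_i|_{\mathcal{H}_1} U = S_i|_{\mathcal{H}_1}$, so each $S_i|_{\mathcal{H}_1}$ commutes with the unitary $U$; combining this with the obvious fact that the restriction of a commuting tuple to a reducing subspace is again commuting yields the commuting Agler-Young unitary of (b).

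The substance of the corollary is part (c), and here is where commutativity replaces finite dimensionality. Write $V_i = S_i|_{\mathcal{H}_2}$ and transport everything to $H^2(\mathcal{D}_{V^*})$ by the unitary $W$ intertwining $V$ with the shift $M_z$. The key observation is that commutativity of the original tuple forces $V_i V = V V_i$, so $W V_i W^*$ commutes with $M_z$ on $H^2(\mathcal{D}_{V^*})$ and is therefore an analytic Toeplitz operator $T_{\Psi_i}$ with $\Psi_i \in H^\infty(\mathcal{B}(\mathcal{D}_{V^*}))$ by the identification of the commutant of the vector-valued shift, a fact valid in any dimension. Feeding this into the Agler-Young relation \eqref{ar}, namely $V^* V_i = V_{n-i}^*$, that is $M_z^* T_{\Psi_i} = T_{\Psi_{n-i}}^*$, and comparing Fourier coefficients, I expect the analytic symbol to collapse to degree one: every coefficient of $z^k$ with $k \ge 2$ must vanish, and on writing $X_i$ for the coefficient of $z$ one obtains $\Psi_i(z) = z X_i + X_{n-i}^*$. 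Thus $W V_i W^*$ is multiplication by $z X_i + X_{n-i}^*$, as claimed.

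To get the relations \eqref{simple} I would use commutativity again. Since each $\Psi_i$ is now a polynomial of degree one, the product $T_{\Psi_i} T_{\Psi_j}$ is the analytic Toeplitz operator with symbol $\Psi_i \Psi_j$, so $[V_i, V_j] = 0$ is equivalent to $\Psi_i \Psi_j = \Psi_j \Psi_i$ as operator-valued functions. Expanding $(z X_i + X_{n-i}^*)(z X_j + X_{n-j}^*)$ and matching the coefficients of $z^2$, $z^1$, $z^0$ gives exactly $[X_i, X_j] = 0$ and $[X_j, X_{n-i}^*] = [X_i, X_{n-j}^*]$, the $z^0$ relation being the adjoint of a $z^2$ relation and hence redundant. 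For the final displayed identity I would specialize the computation of $S_{n-i}^* - S_i S_n^*$ from the proof of Theorem \ref{Wold}: here the fundamental function $f_i$ is the constant $X_i$, so $T_{f_i} = X_i$, while $I - T_z T_z^*$ is the orthogonal projection onto the constants $\mathcal{D}_{V^*}$, equal to $D_{V^*}^2$, so that $(I - T_z T_z^*)^{1/2}$ is that same projection; this converts $T_{f_i}(I - T_z T_z^*)$ into the symmetric form $(I - T_z T_z^*)^{1/2} X_i (I - T_z T_z^*)^{1/2}$. Uniqueness of the splitting follows from uniqueness in the Wold decomposition of $S_n$, and uniqueness of $(X_1, \dots, X_{n-1})$ from the fact that a multiplication operator determines its symbol, so $X_i$ is read off as the degree-one coefficient.

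The only genuinely new step, and the one I would expect to be the main obstacle, is obtaining the affine Toeplitz form of the $V_i$ without finite dimensionality; the resolution is the commutant-of-the-shift identification, after which the Agler-Young relation \eqref{ar} forces the degree to drop to one. I would take care with the coefficient bookkeeping in $M_z^* T_{\Psi_i} = T_{\Psi_{n-i}}^*$, since that is precisely where the symbol is shown to be affine and where the conjugate-symmetric pairing $X_{n-i}^* \leftrightarrow X_i$ is produced.
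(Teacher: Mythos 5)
Your proposal is correct and follows essentially the same route as the paper: the paper likewise observes that finite dimensionality entered only to establish Toeplitzness, replaces it by the commutant-of-the-shift identification, lets the relation $V^*V_i = V_{n-i}^*$ force the analytic symbol into the affine form $zX_i + X_{n-i}^*$, extracts \eqref{simple} by equating coefficients in the commuting products, and reads off the last identity from \eqref{determinant} using that a constant multiplier respects the projection $I - T_zT_z^*$. The only difference is cosmetic bookkeeping: the paper argues via the co-analytic extension form $zf_i(z)+f_{n-i}(z)^*$ and then notes analyticity forces $f_i$ constant, whereas you collapse the degree directly from $M_z^*T_{\Psi_i}=T_{\Psi_{n-i}}^*$.
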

\begin{proof}
We already know the decomposition from the Wold decomposition theorem. Moreover, the finite dimensionality condition is not required because of commutativity. Recall that the finite dimensionality of $\mathcal D_{S_n^*}$ was used to infer that $S_1|_{\mathcal H_2}, S_2|_{\mathcal H_2}, \ldots ,S_{n-1}|_{\mathcal H_2}$ were Toeplitz operators. In the present context, this conclusion is immediate from commutativity. However, commutativity also brings in severe constraints. Since $ WS_i|_{\m H_2}W^*$ now commutes with a shift, it is an analytic Toeplitz operator. But its symbol is of the form $zf_i(z) + f_{n-i}(z)^*$. Hence, analyticity forces the $f_i$ to be constant, say, $X_i$. Thus, we get the symbol of $ WS_i|_{\m H_2}W^*$ to be $zX_i + X_{n-i}^*$. Now, we invoke commutativity of $ WS_i|_{\m H_2}W^*$ with $ WS_j|_{\m H_2}W^*$ for $i,j=1,2, \ldots, n-1$. So, $zX_i + X_{n-i}^*$ has to commute with $zX_j + X_{n-j}^*$. And this gives equation \eqref{simple}. The last assertion follows from \eqref{determinant} by noting that a constant multiplier leaves the range of the projection $(I - T_zT_z^*)$ invariant. \end{proof}

\begin{cor} The restriction of an Agler-Young unitary to a common invariant subspace is an Agler-Young isometry. Conversely, if $\underline{S}$ is an Agler-Young isometry with $\dim\mathcal D_{S_n^*} < \infty$, then it is the restriction of an Agler-Young unitary $\underline{R}$ to a common invariant subspace.
\end{cor}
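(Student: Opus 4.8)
The forward implication is essentially free. An Agler-Young unitary is in particular an Agler-Young isometry (its last entry is unitary, hence an isometry), so the assertion follows at once from the observation recorded right after the definition of the class, namely that the restriction of an Agler-Young isometry to a common invariant subspace is again an Agler-Young isometry. It is worth noting that $R_n|_{\clm}$ is only an isometry in general, not a unitary, so this construction does produce genuinely isometric (non-unitary) tuples, exactly as the unilateral shift arises by restricting the bilateral shift.

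For the converse the plan is to manufacture the extension from the Wold decomposition. By Theorem \ref{Wold} write $\clh = \mathcal H_1 \oplus \mathcal H_2$, where $(S_1|_{\mathcal H_1}, \ldots, S_n|_{\mathcal H_1})$ is already an Agler-Young unitary and, by part (c) (equivalently Corollary \ref{PureStructure}), a unitary $W \colon \mathcal H_2 \to H^2(\m D_{V^*})$ carries the pure summand onto the canonical Agler-Young isometry $(T_{\v_1}, \ldots, T_{\v_{n-1}}, T_z)$ associated with fundamental functions $\bl f = (f_1, \ldots, f_{n-1})$, where $\v_i = z f_i + f_{n-i}^*$. I would leave the unitary summand untouched and enlarge only the pure summand, replacing $H^2(\m D_{V^*})$ by all of $L^2(\m D_{V^*})$. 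Writing $\cle = \m D_{V^*}$ (finite dimensional by hypothesis) and $\clk = \mathcal H_1 \oplus L^2(\cle)$, set
$$R_n = \big(S_n|_{\mathcal H_1}\big)\oplus M_z, \qquad R_i = \big(S_i|_{\mathcal H_1}\big)\oplus M_{\v_i} \quad (i = 1, \ldots, n-1),$$
with $M_z$ the bilateral shift and $M_{\v_i}$ the multiplication operator on $L^2(\cle)$. Then $R_n$ is unitary, and the fundamental relation $R_i = R_{n-i}^* R_n$ is checked summand by summand: on $\mathcal H_1$ it is the Agler-Young unitary relation already in force, while on $L^2(\cle)$ it reduces to $M_{\v_i} = M_{\v_{n-i}}^* M_z = M_{\v_{n-i}^* z}$, which is precisely the symbol identity $\v_{n-i}(z)^* z = \v_i(z)$ established in the construction of the canonical element. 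Hence $\underline R = (R_1, \ldots, R_n)$ is an Agler-Young unitary on $\clk$.

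It remains to recover $\underline S$. Identify $\clh$ with $\clm = \mathcal H_1 \oplus H^2(\cle) \subseteq \clk$ via $\mr{Id}\oplus W$ and the inclusion $H^2(\cle)\subset L^2(\cle)$. Since $\mathcal H_1$ reduces $R_n$ and $H^2(\cle)$ is invariant under the bilateral shift, $\clm$ is invariant under $R_n$ and $R_n|_{\clm}\cong S_n$. The step I expect to be the real obstacle is the behaviour of the remaining $R_i$ on $\clm$: because $T_{\v_i} = P_+ M_{\v_i}|_{H^2(\cle)}$ by the very definition of a Toeplitz operator, one always has $P_{\clm} R_i|_{\clm}\cong S_i$, so $\underline S$ is at least the compression of $\underline R$ to $\clm$, and to get the stated restriction one must make $\clm$ invariant under every $R_i$, i.e. arrange that $M_{\v_i}$ preserve $H^2(\cle)$. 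This is governed entirely by the co-analytic part $f_{n-i}^*$ of the symbol, and a clean sufficient condition is that each $f_{n-i}$ be constant: then $M_{\v_i} = M_z X_{n-i}^{*\vee} + X_{n-i}^*$ visibly maps $H^2(\cle)$ into itself, so $\clm$ is a genuine common invariant subspace and $R_i|_{\clm}\cong S_i$, completing the argument. Moreover this constancy is forced: in any Agler-Young unitary $R_n$ commutes with every $R_i$ (shown right after that definition), and commutators survive restriction to a common invariant subspace, so $S_n$ must commute with each $S_i$; on the pure summand this reads $[T_z, T_{\v_i}] = 0$, which holds exactly when $f_{n-i}$ is constant. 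Thus the genuine restriction statement places us in the setting of a commuting Agler-Young isometry (Corollary \ref{comAYiso}, where $\v_i = z X_i + X_{n-i}^*$ with constant $X_i$), and there the construction above goes through verbatim; the analyticity bookkeeping of the symbols $\v_i$ on the pure summand is the sole delicate point, the unitary summand and the isometry $S_n$ causing no difficulty.
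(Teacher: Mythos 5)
Your treatment of the forward direction and your construction for the converse are exactly the paper's: the same appeal to Theorem \ref{Wold}, the same enlargement $\mathcal K = \mathcal H_1 \oplus L^2(\mathcal D_{V^*})$, and the same tuple $\underline{R} = (S_1|_{\mathcal H_1}\oplus M_{\varphi_1},\ldots,S_{n-1}|_{\mathcal H_1}\oplus M_{\varphi_{n-1}}, U\oplus M_z)$, whose Agler-Young unitarity you verify via the symbol identity $\varphi_{n-i}(z)^*z=\varphi_i(z)$. The difference is that the paper stops there (``That completes the proof''), whereas you pressed on the one point that actually needs checking: whether $\mathcal H_1\oplus H^2(\mathcal D_{V^*})$ is invariant under the $R_i$. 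Your diagnosis is correct, and the gap you found is in the paper's proof, not in your proposal. Since $\varphi_i = zf_i + f_{n-i}^*$, the multiplication operator $M_{\varphi_i}$ maps $H^2(\mathcal D_{V^*})$ into itself only when $f_{n-i}$ is constant; in general the construction exhibits $\underline{S}$ merely as the \emph{compression} of $\underline{R}$ to this subspace (because $T_{\varphi_i}=P_+M_{\varphi_i}|_{H^2}$), which is strictly weaker than the restriction claimed.

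Moreover, your commutativity obstruction shows that the defect lies in the statement itself, not just in this particular construction. In an Agler-Young unitary, $R_n$ commutes with every $R_i$, and commutativity passes to restrictions on common invariant subspaces, so any tuple arising as such a restriction must satisfy $[S_i,S_n]=0$ for all $i$. A canonical Agler-Young isometry with a non-constant fundamental function violates this: for $n=2$ and $f_1(z)=z$ one gets the pure Agler-Young isometry $(T_{z^2+\bar z}, T_z)$ with $\dim\mathcal D_{T_z^*}=1$, and $[T_z, T_{z^2+\bar z}]=[T_z,T_z^*]=-(I-T_zT_z^*)\neq 0$, so it cannot be the restriction of any Agler-Young unitary to a common invariant subspace. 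Thus the converse of the corollary is false as literally stated, and what you prove --- that it holds exactly when each $S_i$ commutes with $S_n$, equivalently when the fundamental functions of the pure part are constants, in which case the construction does produce a genuine common invariant subspace --- is the correct assertion. One small imprecision in your write-up: what is forced is only commutation of each $S_i$ with $S_n$, not commutativity of the whole tuple, so citing the full hypothesis of Corollary \ref{comAYiso} is slightly more than is needed; constancy of the $f_i$ is all your argument uses. (Alternatively, the paper's statement could be salvaged by weakening ``restriction to a common invariant subspace'' to ``compression''.)
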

\begin{proof}
If an Agler-Young unitary is restricted to a common invariant subspace, then the restriction is clearly an Agler-Young isometry.

Conversely, given an Agler-Young isometry $\underline{S} = (S_1, S_2, \ldots,S_n)$ on $\mathcal H$ with $\dim\mathcal D_{S_n^*} < \infty$, we have the Wold decomposition $\mathcal H = \mathcal H_1 \oplus \mathcal H_2$ as in Theorem \ref{Wold}. On the reducing subspace $\mathcal H_1$, the restriction of $\underline{S}$ is an Agler-Young unitary. We know the structure of $\underline{S}$ restricted to the reducing subspace $\mathcal H_2$ from Theorem \ref{Wold}. Without loss of generality, take $\mathcal H_2$ to be $H^2 (\m D_{V^*})$ so that we can omit the unitary $W$ in the following discussion. Identify $H^2 (\m D_{V^*})$ as a subspace of $L^2 (\m D_{V^*})$, the Hilbert space of $\m D_{V^*}$ valued square integrable functions on the unit circle, define $\mathcal K$ to be $\mathcal H_1 \oplus L^2 (\m D_{V^*})$ and
$$\underline{R} = (S_1|_{\mathcal H_1} \oplus M_{\v_1},S_{2}|_{\mathcal H_1} \oplus M_{\v_2}, \ldots , , \ldots ,S_{n-1}|_{\mathcal H_1}  \oplus M_{\v_{n-1}}, U \oplus M_z)$$
where $\v_1, \v_2, \ldots , \v_{n-1}$ are as in Theorem \ref{Wold}.
  That completes the proof.
\end{proof}

\begin{rem}

The restriction of a commuting Agler-Young unitary to a common invariant subspace is a commuting Agler-Young isometry. Conversely, if $\underline{S}$ is a commuting Agler-Young isometry, then it is the restriction of a commuting Agler-Young unitary. No finite dimensionality assumption is required. This is because of Corollary \ref{comAYiso} which provides just the right Wold decomposition that is required. Indeed, in presence of commutativity, we take $\mathcal K = \mathcal H_1 \oplus L^2 (\m D_{V^*})$ and $\underline{R} = (S_1|_{\mathcal H_1} \oplus M_{\v_1},S_{2}|_{\mathcal H_1} \oplus M_{\v_2}, \ldots , , \ldots ,S_{n-1}|_{\mathcal H_1}  \oplus M_{\v_{n-1}}, U \oplus M_z)$ where $\v_i$ now is the analytic function $zX_i + X_{n-i}^*$ obtained from Corollary \ref{comAYiso}. \end{rem}

We proceed to give a set of complete invariants for Agler-Young isometries. Two operator tuples $\underline{A} = (A_1, A_2, \ldots , A_{n-1}, A_n)$ and $\underline{B} = (B_1, B_2, \ldots , B_{n-1}, B_n)$ acting on Hilbert spaces $H$ and $K$ respectively are called unitarily equivalent if there is a single unitary operator $U : H \rightarrow K$ such that $B_i = UA_iU^*$ for each $i=1,2, \ldots ,n.$ The following definition is in the same spirit.

\begin{defn} Let $\mathcal E $ and $\mathcal E^\prime$ be Hilbert spaces. Consider two sets of fundamental functions $f_1, f_2, \ldots ,f_{n-1}$ and $g_1, g_2, \ldots ,g_{n-1}$ from $H^\infty\big(\mathcal B(\mathcal E)\big)$ and $H^\infty\big(\mathcal B(\mathcal E^\prime)\big)$ respectively. They are called unitarily equivalent if there is a single unitary operator $U : \mathcal E \rightarrow \mathcal E^\prime$ such that $g_i(z) = Uf_i(z) U^*$ for every $z \in \mathbb D$ and each $i=1,2, \ldots ,n$. \end{defn}

\begin{prop}
If two sets of fundamental functions are unitarily equivalent, then their associated pure Agler-Young isometries are unitarily equivalent. Conversely, if $\underline{A}$ and $\underline{B}$ are two pure Agler-Young isometries with $\dim \mathcal D_{A_n^*} < \infty$ and $\dim \mathcal D_{B_n^*} < \infty$ and if $\underline{A}$ and $\underline{B}$ are unitarily equivalent, then their fundamental functions are unitarily equivalent. \end{prop}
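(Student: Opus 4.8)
The plan is to treat the two implications separately, realising each pure Agler-Young isometry through its canonical model on a vector-valued Hardy space via Corollary \ref{PureStructure}, and to reduce everything to understanding the unitaries that intertwine two canonical tuples.

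For the forward implication, suppose the fundamental functions satisfy $g_i(z) = U f_i(z) U^*$ for a single unitary $U : \mathcal E \to \mathcal E'$. The canonical Agler-Young isometry of $\bl f$ acts on $H^2(\mathcal E)$ through the symbols $\varphi_i^{f}(z) = z f_i(z) + f_{n-i}(z)^*$ together with $M_z$, and likewise $\bl g$ gives symbols $\varphi_i^{g}$ on $H^2(\mathcal E')$. I would introduce the constant-coefficient unitary $\widehat U = I_{H^2} \otimes U : H^2(\mathcal E) \to H^2(\mathcal E')$, that is, $\widehat U(\sum_k a_k z^k) = \sum_k (U a_k) z^k$. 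Since $\widehat U$ preserves $H^2$ and commutes with the projection $P_+$, with $M_z$, and with multiplication, conjugation gives $\widehat U T_{\varphi} \widehat U^* = T_{U \varphi U^*}$ for every symbol $\varphi$, where $U\varphi U^*$ denotes $z \mapsto U\varphi(z)U^*$. A one-line computation then shows $U \varphi_i^{f}(z) U^* = z\, U f_i(z) U^* + \bigl(U f_{n-i}(z) U^*\bigr)^* = z g_i(z) + g_{n-i}(z)^* = \varphi_i^{g}(z)$, so $\widehat U$ intertwines the two canonical tuples. No finiteness of dimension is needed in this direction.

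For the converse, write $\mathcal E_A = \mathcal D_{A_n^*}$ and $\mathcal E_B = \mathcal D_{B_n^*}$. By Corollary \ref{PureStructure} there are unitaries $W_A : H \to H^2(\mathcal E_A)$ and $W_B : K \to H^2(\mathcal E_B)$ carrying $\underline A$ and $\underline B$ onto their canonical models attached to fundamental functions $\bl f$ and $\bl g$. If $\Omega$ is the unitary implementing $B_i = \Omega A_i \Omega^*$, then $\Psi := W_B \Omega W_A^* : H^2(\mathcal E_A) \to H^2(\mathcal E_B)$ is a unitary intertwining the two canonical isometries; in particular $\Psi M_z = M_z \Psi$ for the $n$-th components. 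Since $A_n$ and $B_n$ are unitarily equivalent we also have $\dim \mathcal E_A = \dim \mathcal E_B$, so a target unitary of the required type can exist; the finite-dimensionality hypotheses are exactly what is consumed in applying Corollary \ref{PureStructure}.

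The crux is identifying $\Psi$. Because $\Psi$ is unitary and intertwines the two forward shifts, taking adjoints and using $\Psi^{-1} = \Psi^*$ shows it also intertwines the two backward shifts $M_z^*$. Writing $H^2(\mathcal E_A) = H^2 \otimes \mathcal E_A$ so that $M_z = S \otimes I$ with $S$ the scalar unilateral shift, $\Psi$ then intertwines both $S \otimes I$ and $S^* \otimes I$; as $S$ is irreducible we have $\{S, S^*\}' = \mathbb C I$, whence $\{S\otimes I, S^*\otimes I\}' = I \otimes \mathcal B(\mathcal E_A)$ and therefore $\Psi = I_{H^2} \otimes U$ for a unitary $U : \mathcal E_A \to \mathcal E_B$. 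Feeding this back into the intertwining relations for the remaining components gives $T_{\varphi_i^{g}} = (I \otimes U) T_{\varphi_i^{f}} (I \otimes U^*) = T_{U \varphi_i^{f} U^*}$, so $\varphi_i^{g}(z) = U \varphi_i^{f}(z) U^*$. Matching the strictly analytic parts of the two sides, namely the term $z g_i(z)$ against $z\, U f_i(z) U^*$, which do not interfere with the co-analytic pieces $g_{n-i}(z)^*$ and $\bigl(U f_{n-i}(z) U^*\bigr)^*$, yields $g_i(z) = U f_i(z) U^*$ for every $i$ and every $z$, the asserted unitary equivalence of the fundamental functions. I expect the main obstacle to be the rigorous justification that the intertwiner collapses to the constant multiplier $I \otimes U$, that is, the step where unitarity is leveraged to force commutation with $M_z^*$ in addition to $M_z$; once that is in place, the Fourier-coefficient matching extracting $g_i = U f_i U^*$ is routine.
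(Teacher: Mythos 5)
Your proof is correct. The forward implication is essentially the paper's own argument: you both build the constant multiplier $\widehat U = I_{H^2}\otimes U$ (the paper calls it $\tilde U$, defined coefficientwise on $L^2(\mathcal E)$), check that it intertwines the projections $P_+$ and the multiplication operators, and conclude that it conjugates $T_{\varphi_i^f}$ into $T_{\varphi_i^g}$.

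Where you genuinely diverge is the converse, and there your argument is more complete than the paper's. The paper reduces both tuples to canonical models via Corollary \ref{PureStructure} and then simply asserts that the relation \eqref{coeff} between Fourier coefficients ``has to hold,'' without producing the coefficient-space unitary $U$ or explaining why the intertwiner of the two canonical tuples has constant coefficients. That is precisely the step you supply: a unitary $\Psi$ intertwining the two shifts must, by unitarity, also intertwine the backward shifts, and irreducibility of the scalar shift (equivalently, $\ker M_z^*$ being the constants, which $\Psi$ must preserve) forces $\Psi = I_{H^2}\otimes U$; conjugation of symbols and Fourier-mode separation of $zg_i(z)$ from $g_{n-i}(z)^*$ then give $g_i = Uf_iU^*$. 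So your proof fills in the rigidity argument the paper leaves implicit, which is the actual mathematical content of the converse. One small point to tighten: since $\Psi$ acts between $H^2\otimes\mathcal D_{A_n^*}$ and $H^2\otimes\mathcal D_{B_n^*}$, the commutant identity $\{S\otimes I, S^*\otimes I\}' = I\otimes\mathcal B(\mathcal E)$ does not apply verbatim to an intertwiner between two different spaces; either first identify the two defect spaces (legitimate, as unitary equivalence of $A_n$ and $B_n$ gives equal defect dimensions), or argue directly that $\Psi$ maps $\ker M_z^* = \mathcal D_{A_n^*}$ onto $\ker M_z^* = \mathcal D_{B_n^*}$ and that the restriction $U=\Psi|_{\mathcal D_{A_n^*}}$ determines $\Psi$ on the dense span of the $z^k\otimes a$. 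This is a routine repair, not a gap.
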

\begin{proof}
Suppose we have two Hilbert spaces $\cle$ and $\cle^\prime$ and two sets of functions: $f_1, f_2, \ldots ,f_{n-1}$ from $H^\infty(\mathcal B(\mathcal E))$ and $g_1, g_2, \ldots ,g_{n-1}$ from $H^\infty(\mathcal B(\mathcal E^\prime))$ with the assumption that there is a unitary $U:\m E\to\m E^\i$ such that $Uf_i(z)U^*=g_i(z)$ for $z\in\mb T$ and $i=1,\ldots, n-1.$. Let $\v_i(z)=zf_i(z)+f_{n-i}(z)^*$ and $\psi_i(z)=zg_i(z)+g_{n-i}(z)^*$ for $i=1,\ldots, n-1$. These co-analytic extensions are unitarily equivalent too because $$U\v_i(z)U^* = U(zf_i(z) + f_{n-i}(z)^*)U^* = zg_i(z) + g_{n-i}(z)^* = \psi_i(z).$$

Therefore, considering the Fourier expansions $\v_i(z)=\sum_{k=-\infty}^\infty \alpha_k^{(i)}z^k$ and $\psi_i(z)=\sum_{k=-\infty}^\infty \beta_k^{(i)}z^k,$ where $\{\alpha_n^{(i)}\}_{n=-\infty}^\infty\subseteq \m B(\m E)$ and $\{\beta_k^{(i)}\}_{k=-\infty}^\infty\subseteq \m B(\m E^\i),$ we have
\begin{equation}\label{coeff}
U\alpha_k^{(i)}U^*=\beta_k^{(i)}\mbox{~ for~} i=1,\ldots,n-1 \mbox{~and~} k\in\mb Z.
\end{equation}
Let $h\in L^2(\m E)$ have the Fourier expansion $h(z)=\sum_{n=-\infty}^\infty h_nz^n$ for $\{h_n\}_{n=-\infty}^\infty\subseteq\m E.$ Define a unitary $\tilde U:L^2(\m E)\to L^2(\m E^\i)$  by $(\tilde Uh)(z)=\sum_{n=-\infty}^\infty (Uh_n)z^n$.   If $P_+:L^2(\m E)\to H^2(\m E)$ and $P_+^\i:L^2(\m E^\i)\to H^2(\m E^\i)$ denote the canonical projections, then it is easy to verify that $\tilde UP_+=P^\i_+\tilde U.$
Consider $(T_{\v_1},T_{\v_2}, \ldots , T_{\v_{n-1}}, T_z)$ and $(T_{\psi_1},T_{\v_2}, \ldots , T_{\psi_{n-1}}, T_z)$ acting on $H^2(\cle)$ and $H^2(\cle^\prime)$ respectively. It is also true that $\tilde UT_{\v_i}=T_{\psi_i}\tilde U$ for $i=1,\ldots,n-1.$ Indeed, $\tilde UP_+M_{\v_i}=P_+^\i M_{\psi_i}\tilde U$ because
\begin{align}
(\tilde UP_+M_{\v_i}h)(z) &= P_+^\i\tilde U(M_{\v_i}h)(z) \nonumber \\
&=P_+^\i\sum_{n=-\infty}^\infty U\big(\sum_{k=-\infty}^\infty \alpha^{(i)}_kh_{n-k}\big)z^n \nonumber \\
&= P_+^\i\sum_{n=-\infty}^\infty \big(\sum_{k=-\infty}^\infty U\alpha^{(i)}_kU^*Uh_{n-k}\big)z^n \nonumber \\
&=P_+^\i\sum_{n=-\infty}^\infty \big(\sum_{k=-\infty}^\infty \beta^{(i)}_kUh_{n-k}\big)z^n=P_+^\i(M_{\psi_i}\tilde Uh)(z), \nonumber
\end{align}
proving that the Agler-Young isometries are unitarily equivalent.

Conversely, if two pure Agler-Young isometries $\underline{A}$ and $\underline{B}$ are unitarily equivalent with the finite dimensionality assumptions mentioned above, we know that $\underline{A}$ and $\underline{B}$ are unitarily equivalent to two canonical pure Agler-Young isometries. Let the Fourier coefficients of the corresponding $\v_i$ and $\psi_i$ be $\alpha_k^{(i)}$ and $\beta_k^{(i)}$ respectively. Then \eqref{coeff} has to hold. Then obviously, the fundamental functions are unitarily equivalent.
\end{proof}

The following corollary, which is an immediate consequence of the proposition and Corollary \ref{PureStructure}, is a far reaching generalization of \cite[Corollary 3.2]{S}. In the commuting case, the assumption about finite dimensionality of the defect spaces is not required, see Corollary 5.2 of \cite{SS}.
\begin{cor}
Two pure Agler-Young isometries $\underline{A} = (A_1, A_2, \ldots ,A_n)$ and $\underline{B} = (B_1, B_2, \ldots ,B_n)$ with $\dim \mathcal D_{A_n^*} < \infty$ and $\dim \mathcal D_{B_n^*} < \infty$ are unitarily equivalent if and only if the two $(n-1)$-tuples
$$(A_1^*-A_{n-1}A_n^*, A_2^*-A_{n-2}A_n^*, \ldots , A_{n-1}^* - A_1A_n^*)$$
and
$$(B_1^*-B_{n-1}B_n^*, B_2^*-B_{n-2}B_n^*, \ldots , B_{n-1}^* - B_1B_n^*)$$  are unitarily equivalent.
\end{cor}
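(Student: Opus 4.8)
The plan is to funnel everything through the preceding Proposition and Corollary~\ref{PureStructure}. By that Proposition, the two pure Agler-Young isometries $\underline{A}$ and $\underline{B}$ are unitarily equivalent if and only if their fundamental function tuples $\bl f = (f_1, \ldots, f_{n-1})$ and $\bl g = (g_1, \ldots, g_{n-1})$, supplied by Corollary~\ref{PureStructure}, are unitarily equivalent. So it suffices to prove that the two $(n-1)$-tuples of fundamental operators are unitarily equivalent if and only if $\bl f$ and $\bl g$ are unitarily equivalent. Throughout I write $P_0 = I - T_zT_z^*$ for the orthogonal projection of $H^2(\m D_{A_n^*})$ onto the constant functions, canonically identified with $\m D_{A_n^*}$; recall that $T_{f_i}P_0$ sends a constant $c$ to the function $z \mapsto f_i(z)c$, and that by \eqref{determinant} the $i$-th entry $A_i^* - A_{n-i}A_n^*$ of the first tuple is, after the identification $W_A$, exactly $T_{f_{n-i}}P_0$.

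For the forward implication, suppose $\underline{A}$ and $\underline{B}$ are unitarily equivalent, say $B_i = U A_i U^*$ for a single unitary $U$ and every $i$. Then $U$ simultaneously conjugates $A_i^* - A_{n-i}A_n^*$ to $B_i^* - B_{n-i}B_n^*$, so the two tuples of fundamental operators are unitarily equivalent by the very same $U$. Equivalently, starting from a unitary equivalence of $\bl f$ and $\bl g$, I would pass to the ampliation $\tilde U$ built in the proof of the Proposition and invoke \eqref{determinant}. This direction uses no finite-dimensionality.

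For the converse I would first use Corollary~\ref{PureStructure} to put both tuples into canonical form: after conjugating by $W_A$ and $W_B$, the hypothesis yields a single unitary $\Phi : H^2(\m D_{A_n^*}) \to H^2(\m D_{B_n^*})$ intertwining $(T_{f_{n-i}}P_0)_i$ with $(T_{g_{n-i}}P_0)_i$. The goal is then to extract from $\Phi$ a unitary $U : \m D_{A_n^*} \to \m D_{B_n^*}$ with $g_i = U f_i U^*$, after which the Proposition closes the argument. The natural strategy is to study the coarse geometry of each $T_{f_i}P_0$: its initial space sits inside the constants, which should force $\Phi$ to carry the constants of $H^2(\m D_{A_n^*})$ onto those of $H^2(\m D_{B_n^*})$, so that $U := \Phi|_{\m D_{A_n^*}}$ is the candidate unitary. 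One would then recover the Taylor coefficients of $f_i$ level by level by applying powers of the backward shift $T_z^*$ to $T_{f_i}P_0$ and compressing back onto the constants, matching them with those of $g_i$ through \eqref{coeff}; the finite-dimensionality of the defect spaces is what should keep $\Phi$ tethered to the relevant finite-dimensional pieces.

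The main obstacle is precisely this extraction. A single $T_{f_i}P_0$ remembers $f_i$ only as one vector lying in its range, and an intertwining unitary is free to rotate that range; recovering the individual coefficients $A_k^{(i)}$ — that is, the whole function $f_i$ rather than merely its norm and its value at a point — requires controlling how $\Phi$ interacts with the grading induced by the shift $T_z$, which is not itself among the operators in the tuple. Showing that $\Phi$ is compatible with $T_z$, so that the level-by-level recovery is legitimate and yields one unitary $U$ serving all $i$ simultaneously, is the crux and the step I expect to be hardest; the hypotheses $\dim \m D_{A_n^*} < \infty$ and $\dim \m D_{B_n^*} < \infty$ should be essential there.
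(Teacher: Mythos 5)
Your forward implication is correct, and for exactly the reason you give: if $B_i=UA_iU^*$ for all $i$, then the same $U$ conjugates each word $A_i^*-A_{n-i}A_n^*$ onto $B_i^*-B_{n-i}B_n^*$; neither Corollary \ref{PureStructure} nor finite dimensionality is needed for that half. The converse is where you stop, and your diagnosis of the obstruction is precisely right --- but the situation is worse than ``hardest step'': the compatibility of $\Phi$ with $T_z$ that you are hoping to establish is not available, and in fact the converse implication is false as literally stated. Take $n=2$ and the canonical pairs on the scalar Hardy space $H^2$ with fundamental functions $f(z)=z$ and $g(z)=z^2$, that is, $\underline{A}=(T_{z^2+\bar z},T_z)$ and $\underline{B}=(T_{z^3+\bar z^2},T_z)$; both are pure Agler-Young isometries with one-dimensional defect spaces. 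By \eqref{determinant}, $A_1^*-A_1A_2^*=T_f(I-T_zT_z^*)=z\otimes 1$ and $B_1^*-B_1B_2^*=z^2\otimes 1$, in the paper's notation $(x\otimes y)h=\langle h,y\rangle x$ for rank-one operators. The unitary $\Psi$ on $H^2$ that swaps the basis vectors $z$ and $z^2$ and fixes all other monomials satisfies $\Psi(z\otimes 1)\Psi^*=z^2\otimes 1$, so the two $1$-tuples are unitarily equivalent. Yet $\underline{A}$ and $\underline{B}$ are not: a unitary carrying $T_z$ to $T_z$ commutes with $T_z$, hence is an analytic Toeplitz operator, hence multiplication by a unimodular constant, and conjugation by a constant fixes $T_{z^2+\bar z}$, which is different from $T_{z^3+\bar z^2}$. (Equivalently, by the preceding Proposition, unitary equivalence of $\underline{A}$ and $\underline{B}$ would force $f=g$ since the defect spaces are one-dimensional.) Note that in this example $\Psi$ even maps constants onto constants, so the first step of your intended extraction succeeds and the argument still collapses: an abstract intertwiner may rotate the range vector $f$ onto any function with the same norm and the same value at $0$, and that is all the unitary equivalence class of $T_f(I-T_zT_z^*)$ remembers.

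So the gap you found is genuine and cannot be closed; moreover it is the paper's gap as much as yours. The paper offers no argument beyond the sentence that the Corollary is ``an immediate consequence of the proposition and Corollary \ref{PureStructure},'' which conflates two different facts: that the operator $T_{f_i}(I-T_zT_z^*)$ determines the function $f_i$ (true --- this is the uniqueness argument at the end of the proof of Theorem \ref{Wold}), and that the \emph{unitary equivalence class} of this operator determines the unitary equivalence class of $f_i$ (false, by the example above). The statement, and essentially your proof of it, becomes correct if one appends the last components to the two tuples, i.e.\ requires the single unitary to intertwine $A_n$ with $B_n$ as well. Then, after passing to the canonical models, $\Phi$ becomes a unitary from $H^2(\mathcal D_{A_n^*})$ onto $H^2(\mathcal D_{B_n^*})$ intertwining the two shifts, hence the ampliation of a constant unitary $U:\mathcal D_{A_n^*}\to\mathcal D_{B_n^*}$; applying the relations $\Phi\, T_{f_{n-i}}(I-T_zT_z^*)=T_{g_{n-i}}(I-T_zT_z^*)\,\Phi$ to constant functions gives $g_i(z)=Uf_i(z)U^*$ for all $i$ and $z$, and the preceding Proposition then delivers the unitary equivalence of $\underline{A}$ and $\underline{B}$.
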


We shall end this section with a neat result which characterizes pure Agler-Young isometries with a remarkable simplicity.

\begin{prop}\label{TheCs}
 Let $S_n$ be a pure isometry and let $(C_1, C_2, \ldots ,C_{n-1})$ be a tuple of bounded operators  such that each $C_i$ commutes with either $S_n$ or $S_n^*$. Let $S_i=C_iS_n+C_{n-i}^*$. Then $\underline{S} = (S_1, S_2, \ldots , S_n)$ is a pure Agler-Young isometry.

 Conversely, if $\underline{S} = (S_1, S_2, \ldots , S_n)$ is a pure Agler-Young isometry with $\dim \mathcal D_{S_n^*} < \infty$, then there exists a tuple $(C_1, C_2, \ldots ,C_{n-1})$ of bounded operators  such that each $C_i$ commutes with either $S_n$ or $S_n^*$ and $S_i=C_iS_n+C_{n-i}^*$. \end{prop}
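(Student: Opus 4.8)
The plan is to treat the two implications separately: the forward direction by a direct computation, and the converse by appealing to the structure theorem for pure Agler-Young isometries, Corollary \ref{PureStructure}.

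For the forward direction, recall from \eqref{ar} that $\underline{S}$ is a pure Agler-Young isometry precisely when $S_n$ is a pure isometry and $S_i = S_{n-i}^* S_n$ for every $i$. Since $S_n$ is assumed to be a pure isometry, it suffices to verify the second condition. I would substitute the defining relations $S_i = C_i S_n + C_{n-i}^*$ and $S_{n-i} = C_{n-i} S_n + C_i^*$ and compute
\[
S_{n-i}^* S_n = (S_n^* C_{n-i}^* + C_i) S_n = S_n^* C_{n-i}^* S_n + C_i S_n.
\]
The crux is then the identity $S_n^* C_{n-i}^* S_n = C_{n-i}^*$, which holds in either permitted case: if $C_{n-i}$ commutes with $S_n$, then $C_{n-i}^*$ commutes with $S_n^*$ and $S_n^* C_{n-i}^* S_n = C_{n-i}^* S_n^* S_n = C_{n-i}^*$; if instead $C_{n-i}$ commutes with $S_n^*$, then $C_{n-i}^*$ commutes with $S_n$ and $S_n^* C_{n-i}^* S_n = S_n^* S_n C_{n-i}^* = C_{n-i}^*$, using $S_n^* S_n = I$ in both lines. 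Because $n-i$ ranges over $\{1,\ldots,n-1\}$ as $i$ does, the hypothesis on all the $C_j$ supplies exactly what is needed, and I conclude $S_{n-i}^* S_n = C_{n-i}^* + C_i S_n = S_i$.

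For the converse, I would invoke Corollary \ref{PureStructure}: since $\underline{S}$ is a pure Agler-Young isometry with $\dim \mathcal D_{S_n^*} < \infty$, there is a unitary $W : \clh \to H^2(\mathcal D_{S_n^*})$ and a fundamental tuple $\bl f = (f_1, \ldots, f_{n-1})$ of $H^\infty(\mathcal B(\mathcal D_{S_n^*}))$ functions with $W S_n W^* = T_z$ and $W S_i W^* = T_{\v_i}$, where $\v_i(z) = z f_i(z) + f_{n-i}(z)^*$. Because each $f_i$ is holomorphic, the analytic Toeplitz operator $T_{f_i}$ satisfies $T_{\v_i} = T_{f_i} T_z + T_{f_{n-i}}^*$ and commutes with $T_z$. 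I would then set $C_i = W^* T_{f_i} W$. Conjugating back by $W$ yields $S_i = C_i S_n + C_{n-i}^*$, and since $W$ is unitary and $T_{f_i}$ commutes with $T_z$, each $C_i$ commutes with $S_n = W^* T_z W$. This produces the required tuple, with the bonus that every $C_i$ can in fact be taken to commute with $S_n$ itself.

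The computations are routine and the substantive content is borrowed. In the forward direction the only thing to watch is that the single identity $S_n^* C_{n-i}^* S_n = C_{n-i}^*$ must survive both alternatives in the hypothesis, which it does because $S_n$ is an isometry. In the converse, the one genuine input is the structure theorem, and this is exactly where the assumption $\dim \mathcal D_{S_n^*} < \infty$ enters: it is what guarantees that the $S_i$ restricted to the shift part are Toeplitz operators in Theorem \ref{Wold}. No comparable hypothesis is needed for the forward implication, which mirrors the asymmetry already visible in the earlier results.
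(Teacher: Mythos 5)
Your proposal is correct and follows essentially the same route as the paper: the forward direction by the direct computation $S_{n-i}^*S_n = S_n^*C_{n-i}^*S_n + C_iS_n = C_{n-i}^* + C_iS_n = S_i$ split into the two commutation cases, and the converse by invoking Corollary \ref{PureStructure} and setting $C_i = W^*T_{f_i}W$, exactly as the paper does. If anything, your forward computation is written more carefully than the paper's (which drops some adjoints in its displayed case analysis), and your closing remarks on where $S_nS_n^*=I$ fails versus $S_n^*S_n=I$ holds, and where finite dimensionality enters, match the paper's use of these facts.
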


\begin{proof}
If $S_n$ is a pure isometry and $S_i=C_iS_n+C_{n-i}^*$  such that each $C_i$ commutes with either $S_n$ or $S_n^*$, then
\begin{align} S_{n-i}^* S_n = (C_{n-i}S_n + C_i^*)^*S_n &= S_n^* C_{n-i}S_n + C_iS_n \\
&= \left\{ \begin{array}{c} C_{n-i} S_n^*S_n + C_iS_n \mbox{ if } C_{n-i} \mbox{ commutes with } S_n^* \\  S_n^*S_n C_{n-i} + C_iS_n \mbox{ if } C_{n-i} \mbox{ commutes with } S_n \end{array} \right. \end{align}
In either case, we get $S_{n-i}^* S_n = S_i$. Hence $\underline{S}$ is a pure Agler-Young isometry.

Conversely, if $(S_1, S_2, \ldots , S_n)$ is a pure Agler-Young isometry with $\dim \mathcal D_{S_n^*}<\infty$, then by Corollary \ref{PureStructure}, $S_n$ is a pure isometry and there is a function tuple $\bl f = (f_1, f_2, \ldots f_{n-1})$ from $H^\infty(\mathcal B (\mathcal D_{S_n^*}))$ and a unitary $W: \mathcal H \rightarrow H^2 (\mathcal D_{S_n^*})$ such that $(WS_1W^*, WS_2W^*, \ldots , WS_nW^*)$ is equal to the canonical Agler-Young isometry associated with $\bl f$. Let $C_i = W^*T_{f_i}W$ for $i=1,2, \ldots ,n-1$. Then the $C_i$ commute with $S_n$ and
$$S_i = W^*(WS_iW^*)W = W^*(T_{f_i}T_z + T_{f_{n-i}}^*)W = C_iS_n + C_{n-i}^*.$$
 \end{proof}

 \begin{rem}
 If $(S_1, S_2, \ldots , S_n)$ is a commuting pure Agler-Young isometry (with no finite dimensionality assumption), then also there exists a tuple $(C_1, C_2, \ldots ,C_{n-1})$ of bounded operators  such that each $C_i$ commutes with either $S_n$ or $S_n^*$ and $S_i=C_iS_n+C_{n-i}^*$. In fact, $C_i = W^*T_{f_i}W$ for $i=1,2, \ldots ,n-1$ where the functions $f_i$ are the constants $X_i$ obtained in Corollary \ref{comAYiso}. \end{rem}

We shall return to Agler-Young isometries in Section 6 when we show that their adjoints are the $extremals$ of the $family$ of adjoints of elements from the Agler-Young class.

\section{Proof of the dilation theorem (Theorem 2)}

In a nutshell, the content of this section is to prove the following.

\vspace*{5mm}

 \begin{centerline}
{\em $\underline{S} = (S_1, S_2, \ldots,S_n)$ is in the Agler-Young class} \end{centerline}
 \begin{centerline} {\em if and only if $\underline{S}$ has a dilation to an Agler-Young isometry.} \end{centerline}

\vspace*{5mm}

We start with a preliminary lemma.

\begin{lem} If $\underline{S} = (S_1, S_2, \ldots,S_n)$ is an $n$-tuple of bounded operators on a Hilbert space $\clh$ having an Agler-Young isometric dilation, then it has a minimal Agler-Young isometric dilation. \end{lem}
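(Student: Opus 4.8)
The plan is to take the given dilation and cut it down to the closed span generated by $\clh$. Suppose $\underline{V} = (V_1, \ldots, V_n)$ is an Agler-Young isometric dilation of $\underline{S}$ acting on a Hilbert space $\clk$ containing $\clh$. Set
$$\clN := \overline{\mathrm{span}}\{h,\, V_{i_1} V_{i_2} \cdots V_{i_k} h : h \in \clh,\ k \ge 1,\ 1 \le i_1, \ldots, i_k \le n\},$$
the smallest closed subspace of $\clk$ that contains $\clh$ and is invariant under each $V_i$. Invariance is immediate, since applying any $V_j$ to one of the spanning vectors yields another spanning vector and $V_j$ is bounded. Writing $W_i := V_i|_{\clN}$ for $i = 1, \ldots, n$, the space $\clN$ is, by construction, the minimal possible dilation space, so the whole task reduces to verifying that $\underline{W} = (W_1, \ldots, W_n)$ is an Agler-Young isometric dilation of $\underline{S}$.

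First I would check that $\underline{W}$ is an Agler-Young isometry, using the characterization recalled in the introduction: it is enough that $W_n$ be an isometry and that $W_i = W_{n-i}^* W_n$ for $i = 1, \ldots, n-1$. Since $V_n$ is an isometry and $\clN$ is $V_n$-invariant, its restriction $W_n$ is again an isometry. For the adjoint relation the subtlety is that the adjoint of a restriction is merely a compression, $W_{n-i}^* = P_{\clN} V_{n-i}^*|_{\clN}$; nonetheless, for $h \in \clN$,
$$W_{n-i}^* W_n h = P_{\clN} V_{n-i}^* V_n h = P_{\clN} V_i h = V_i h = W_i h,$$
where the second equality is the defining relation $V_i = V_{n-i}^* V_n$ of $\underline{V}$ (see \eqref{ar}) and the third uses $V_i h \in \clN$. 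Hence $W_i = W_{n-i}^* W_n$, and $\underline{W}$ is an Agler-Young isometry.

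It then remains to confirm the dilation identity \eqref{dilation} and minimality. Because each $V_i$ maps $\clN$ into itself and agrees there with $W_i$, for $h \in \clh \subseteq \clN$ one has $W_{i_1} \cdots W_{i_k} h = V_{i_1} \cdots V_{i_k} h$, so that $P_{\clh} W_{i_1} \cdots W_{i_k} h = P_{\clh} V_{i_1} \cdots V_{i_k} h = S_{i_1} \cdots S_{i_k} h$; thus $\underline{W}$ dilates $\underline{S}$. Minimality holds by the very definition of $\clN$. I expect the only real obstacle to be the middle step: one must ensure that passing to the restriction does not spoil the fundamental relation, and this is precisely where the invariance of $\clN$ under every $V_i$ (not just under $V_n$) is essential, guaranteeing that the compression $P_{\clN}$ hidden inside $W_{n-i}^*$ acts as the identity on $V_i h$. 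I would also record the small but necessary point that $\clh$ must be placed among the generators of $\clN$, for this is what makes $\clN$ contain $\clh$ and hence makes $\underline{W}$ an honest dilation rather than merely an Agler-Young isometry.
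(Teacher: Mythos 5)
Your proof is correct, but it takes a genuinely different route from the paper's. You cut the given dilation $\underline{V}$ down to the smallest closed subspace $\clN$ containing $\clh$ that is invariant under \emph{all} of $V_1,\ldots,V_n$, and you work with honest restrictions $W_i = V_i|_{\clN}$; the Agler-Young relation survives because the projection $P_{\clN}$ hidden inside $W_{n-i}^*$ acts as the identity on $V_ih \in \clN$, which is exactly the point you flag. The paper instead cuts down to the smaller space $\clk_{\mathrm{min}} = \overline{\mathrm{span}}\{V_n^m h : h \in \clh,\ m \ge 0\}$, which is invariant only under $V_n$, and therefore keeps the restriction of $V_n$ but takes the \emph{compressions} $P_{\clk_{\mathrm{min}}} V_i|_{\clk_{\mathrm{min}}}$ for $i<n$; the Agler-Young relation for these compressions is checked by essentially the same one-line computation you use. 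Both constructions are minimal in the sense of the paper's definition (and both share the harmless quibble, which you correctly record, that $\clh$ itself must be counted among the generators). The trade-off is this: your construction is the more transparent one, since restrictions to a joint invariant subspace require no bookkeeping about which operators leave the space invariant. The paper's construction, at the price of using compressions, delivers two extra dividends that yours does not: the dilation space $\clk_{\mathrm{min}}$ is precisely the minimal isometric dilation space of the single contraction $S_n$, and $\clh$ is \emph{co-invariant} under the new tuple (the paper proves $R_i^*h = S_i^*h$ for $h \in \clh$), neither of which holds for your $\underline{W}$ on $\clN$ in general. These two features, rather than the bare statement of the lemma, are what the paper exploits immediately afterwards in Lemma \ref{compression} (where uniqueness of the minimal isometric dilation of $S_n$ is invoked to identify the dilation space with $\clh \oplus \ell^2(\mathcal D_{S_n})$) and in the uniqueness parts of the Dilation Theorem. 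So your argument fully proves the lemma as stated, but the later sections of the paper would still need the paper's version of the construction, or an additional argument, to go through.
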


\begin{proof}
Let us start with an Agler-Young isometric dilation $\underline{W} = (W_1, W_2, \ldots,W_n)$ acting on $\clk \supset \clh$ of $\underline{S}$. Consider the subspace
$$\clk_{\rm{min}} = \overline{\rm{span}}\{ W_n^mh : h \in \clh \mbox{ and } m=0,1, \ldots \}.$$
It is obviously invariant under $W_n$. It is not invariant under the rest of the $W_i$, but we can consider the compressions of $W_i$ to $\clk_{\rm{min}}$ for $i=1,2, \ldots , n-1$. The tuple $$\underline{R} = (R_1, R_2, \ldots ,R_n) = (P_{\clk_{\rm{min}}}W_1|_{\clk_{\rm{min}}}, P_{\clk_{\rm{min}}}W_2|_{\clk_{\rm{min}}}, \ldots, W_n|_{\clk_{\rm{min}}})$$ is an Agler-Young isometric dilation of $\underline{S}$. Indeed, the restriction of $W_n$ to $\clk_{\rm{min}}$ is an isometry and
$$ (P_{\clk_{\rm{min}}}W_{n-i}|_{\clk_{\rm{min}}})^* W_n|_{\clk_{\rm{min}}} =  P_{\clk_{\rm{min}}}W_{n-i}^* W_n|_{\clk_{\rm{min}}} = P_{\clk_{\rm{min}}} W_i|_{\clk_{\rm{min}}}$$
showing that $\underline{R}$ is an Agler-Young isometry. Moreover, $\clh \subset \clk_{\rm{min}}$. The Agler-Young isometry $\underline{R}$ not only dilates $\underline{S}$, more is true. It in fact has $\clh$ as a co-invariant subspace. To see that, first note that
$$ P_\clh R_i W_n^mh  = P_\clh W_i W_n^mh = S_i S_n^m h = S_i P_\clh W_n^mh \mbox{ for } h \in \clh \mbox{ and } m=0,1, \ldots .$$
This proves that $ P_\clh R_i = S_i P_\clh$. Now, for $h \in \clh$ and $k \in \clk_{\rm{min}}$, we have
\begin{align*} \langle R_i^*h, k \rangle = \langle h, R_i k \rangle = \langle P_\clh h, R_i k \rangle &= \langle h, P_\clh R_i k \rangle \\
&= \langle h, S_i P_\clh k \rangle = \langle P_\clh S_i^* h, k \rangle = \langle S_i^* h, k \rangle.\end{align*}
That completes the proof of co-invariance. Thus, we have an Agler-Young isometric dilation $\underline{R}$ of $\underline{S}$ which moreover enjoys the property of minimality and co-invariance. \end{proof}

\begin{lem} Let $\underline{S} = (S_1, S_2, \ldots,S_n)$ be an $n$-tuple of bounded operators on a Hilbert space $\clh$ having an Agler-Young isometric dilation $\underline{W} = (W_1, W_2, \ldots,W_n)$ acting on $\clk \supset \clh$. Then $\underline{S}$ is in the Agler-Young class. \label{compression} \end{lem}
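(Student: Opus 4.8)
The plan is to reduce to a co-invariant dilation, read off its block structure, and then observe that the ``off-diagonal'' block relations forced on an Agler-Young isometry are exactly what produce the symmetric factorisation required in the fundamental equation \eqref{AYdef}. Since the conclusion concerns $\underline S$ alone, I would first invoke the preceding lemma to replace $\underline W$ by a \emph{minimal} Agler-Young isometric dilation, for which $\clh$ is a co-invariant subspace. Then $\clh$ is invariant under each $W_i^*$, so with respect to $\clk = \clh \oplus \clh^\perp$ every $W_i$ is lower triangular,
$$ W_i = \begin{pmatrix} S_i & 0 \\ C_i & D_i \end{pmatrix}, \qquad i = 1, 2, \ldots, n, $$
with $C_i : \clh \to \clh^\perp$ and $D_i : \clh^\perp \to \clh^\perp$; co-invariance kills the $(1,2)$ entry and the dilation relation forces the $(1,1)$ entry to be $S_i$.

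Next I would extract two facts. Because $W_n$ is an isometry, the $(1,1)$ entry of $W_n^*W_n = I$ reads $C_n^*C_n = I - S_n^*S_n = D_{S_n}^2$; in particular $S_n$ is a contraction, and the polar decomposition gives $C_n = \Phi D_{S_n}$, where $\Phi$ is a partial isometry with initial space $\mathcal D_{S_n}$, so $\Phi^*\Phi$ is the projection onto $\mathcal D_{S_n}$. Then I would multiply out the $2\times2$ blocks in the defining identity $W_i = W_{n-i}^*W_n$. The $(1,1)$ entry yields
$$ S_i - S_{n-i}^*S_n = C_{n-i}^*C_n, $$
and the $(2,1)$ entry yields the crucial extra relation $C_i = D_{n-i}^*C_n$, equivalently $C_{n-i} = D_i^*C_n$ after replacing $i$ by $n-i$.

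Finally I would combine these. Since $C_{n-i}^* = C_n^*D_i = D_{S_n}\Phi^*D_i$ and $C_n = \Phi D_{S_n}$, the $(1,1)$ identity becomes
$$ S_i - S_{n-i}^*S_n = D_{S_n}\,\Phi^*D_i\Phi\,D_{S_n}, $$
so $X_i := \Phi^*D_i\Phi$ is a bounded operator on $\mathcal D_{S_n}$ (its boundedness and mapping into $\mathcal D_{S_n}$ follow from the initial space of $\Phi$ being $\mathcal D_{S_n}$), and it solves \eqref{AYdef}. Together with the contractivity of $S_n$ this places $\underline S$ in $AY_n$.

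I expect the main obstacle to be precisely the step that makes the argument succeed. The naive computation only gives $S_i - S_{n-i}^*S_n = C_{n-i}^*C_n = (\,\cdot\,)D_{S_n}$, a single defect on the right, which is \emph{not} of the symmetric form $D_{S_n}X_iD_{S_n}$ and does not by itself exhibit membership in $AY_n$. The resolution is to refuse to discard the $(2,1)$ block equation: the identity $C_{n-i} = D_i^*C_n$ is exactly what supplies the second factor of $D_{S_n}$ (through $C_n = \Phi D_{S_n}$) and symmetrises the factorisation. Checking that $\Phi^*D_i\Phi$ is genuinely an operator of $\mathcal B(\mathcal D_{S_n})$ is then routine.
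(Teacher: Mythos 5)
Your proof is correct, and it departs from the paper's argument at the key technical step. Both proofs begin identically: invoke the preceding minimality lemma to obtain a dilation for which $\clh$ is co-invariant, and write each $W_i$ in lower-triangular block form with $(1,1)$ entry $S_i$. From there the paper conjugates $W_n$ onto Schäffer's explicit model $V_n$ (using uniqueness of the minimal isometric dilation of $S_n$), so that the blocks $D$ and $E$ of $V_n$ have a known concrete form --- $D$ a column with $D_{S_n}$ in its top entry and zeros below, $E$ a shift --- and then uses the $(1,2)$ block equation $D_{n-i}^*E = 0$ to force the off-diagonal block of each $W_{n-i}$ to be supported in the first coordinate, after which the $(1,1)$ and $(2,1)$ equations produce the factorization $D_{S_n}X_iD_{S_n}$. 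You never leave the abstract decomposition $\clk = \clh \oplus \clh^\perp$: you instead extract $C_n^*C_n = D_{S_n}^2$ from the $(1,1)$ block of $W_n^*W_n = I$, factor $C_n = \Phi D_{S_n}$ by polar decomposition, and let the $(2,1)$ equation $C_{n-i} = D_i^*C_n$ do the work that the explicit Schäffer structure does in the paper. What your route buys: it avoids both the uniqueness theorem for minimal isometric dilations and the Schäffer model, so it is more self-contained, and your closing remark is exactly the right diagnosis --- the $(1,1)$ equation alone gives only $C_{n-i}^*C_n$, and it is the $(2,1)$ relation combined with $C_n = \Phi D_{S_n}$ that symmetrizes it and places $X_i = \Phi^* D_i \Phi|_{\mathcal D_{S_n}}$ in $\mathcal B(\mathcal D_{S_n})$. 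What the paper's route buys: by working inside the Schäffer model it records precisely where the fundamental operators sit (as the $(1,1)$ entries of the corner blocks $E_{n-i}$), and that explicit bookkeeping is then reused in parts (2) and (3) of the Dilation Theorem to prove uniqueness of the dilation, which your abstract argument does not directly supply.
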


\begin{proof}
 By virtue of the lemma above, we shall assume that $\clk = \clk_{\rm{min}} = \overline{\rm{span}}\{ W_n^mh : h \in \clh \mbox{ and } m=0,1, \ldots \}$.  Now we have the advantage that $\clk$ is the space of minimal isometric dilation of the contraction $S_n$. We know that a contraction has only one minimal isometric dilation up to unitary invariance. Thus, there is a unitary
$$U: \clk \rightarrow \m K_0=\m H\oplus\m D_{S_n}\oplus\m D_{S_n}\oplus\ldots=\m H\oplus\ell^2(\m D_{S_n})$$
such that $UW_nU^* = V_n$ where $V_n$ is the following version of the minimal unitary dilation of $S_n$ (Sch$\ddot{\mbox{a}}$ffer's costruction):
$$ V_n(h_0,h_1,h_2,\ldots)  =  (S_nh_0,{D}_{S_n}h_0,h_1, h_2,\ldots).$$
This $U$ fixes $\clh$ as well and hence each $UW_iU^*$ leaves $\clh$ as an invariant subspace. Thus corresponding to the decomposition $\clk = \clh \oplus (\clk \ominus \clh)$, the operators $UW_iU^*$ have the block matrix representation:
$$UW_nU^*=\begin{pmatrix} S_n&0\\D&E
\end{pmatrix} \mbox{ and } UW_iU^* =  \begin{pmatrix} S_i&0\\D_i&E_i \end{pmatrix}  $$  where $D, D_i$ are from $ \mathcal H$ to $\ell^2(\mathcal{D}_{S_n})$ and $E, E_i$ are on $\ell^2(\mathcal{D}_{S_n}) $. Moreover,
$$D=\begin{pmatrix}{D}_{S_n} \\0\\0\\ \vdots \end{pmatrix} \mbox{ and } E=\begin{pmatrix} 0&0&0&\dots\\I&0&0&\dots\\0&I&0&\dots\\\dots&\dots&\dots&\dots \end{pmatrix} .$$
Since $(W_1, W_2, \ldots, W_{n-1},{V_n})$ is an Agler-Young isometry, we have for $i=1,2, \ldots, n-1$,
\begin{align}
 UW_iU^* & =  UW_{n-i}^*U^* UW_nU^* \nonumber \\
 \mbox{ or, } \begin{pmatrix} S_i&0\\D_i&E_i \end{pmatrix} & =  \begin{pmatrix} S_{n-i}^*&D_{n-i}^*\\0&E_{n-i}^* \end{pmatrix} \begin{pmatrix} S_n&0\\D&E
\end{pmatrix} \nonumber \\
 \mbox{ or, } \begin{pmatrix} S_i&0\\D_i&E_i \end{pmatrix} & =  \begin{pmatrix} S_{n-i}^*S_n + D_{n-i}^* D & D_{n-i}^* E  \nonumber \\ E_{n-i}^*D&E_{n-i}^*E\end{pmatrix} \nonumber
\end{align}
Out of four equations that we can get from above, we need three - the ones corresponding to $(1,1)$, $(1,2)$ and $(2,1)$ entries.
\begin{equation} \label{New11}
S_{n-i}^*S_n + D_{n-i}^* D = S_i \mbox{ for } i=1,2, \ldots, n-1.
\end{equation}
\begin{equation} \label{New12}
D_{n-i}^*E = 0.
\end{equation}
\begin{equation} \label{New21}
E_{n-i}^*D = D_i \mbox{ for } i=1,2, \ldots, n-1.
\end{equation}

From equation \eqref{New12}, we get $E^*D_{n-i} = 0$ which, because of what $E$ is, implies that only the first component of $D_{n-i}$ is non-zero. This non-zero component is an operator from $\clh$ to $\mathcal D_{S_n}$, say $Z_{n-i}$. Equation \eqref{New21} tells us that $Z_i = X_{n-i}^* D_{S_n}$ where $X_{n-i}$ is the $(1,1)$ entry of $E_{n-i}$ when written in its block matrix form as an operator on $\mathcal D_{S_n} \oplus \mathcal D_{S_n} \oplus \cdots $. Now the proof is complete in view of the equation \eqref{New11}.

 \end{proof}

 We shall now prove the converse, viz., every Agler-Young contraction has an Agler-Young isometric dilation.

\textbf{Proof of The Dilation Theorem}

(1) It is evident from the definition that $V_n$ on $\m K_0$ is the minimal isometric dilation of $S_n$ (Sch$\ddot{\mbox{a}}$ffer's construction, see \cite{Schaffer}). Let us compute the adjoints of $(V^{\bl X}_i)^*$ and $V_n^*$.  A straightforward computation shows that they are as follows.
 \Bea
&& (V^{\bl X}_i)^*(h_0,h_1,h_2,\ldots)=(S_i^*h_0+D_{S_n}X_{n-i}h_1, X_i^*h_1+X_{n-i}h_2,X_i^*h_2+X_{n-i}h_3,\ldots),\\
 && V_n^*(h_0,h_1,h_2,\ldots)=(S_n^*h_0+D_{S_n}h_1,h_2,h_3,\ldots).
 \Eea
 The Hilbert space $\m H$, embedded in $\m K_0$ by the map $h\mapsto (h,0,0,\ldots)$ is jointly co-invariant under $V^{\bl A}_i$ and $V_n$  because $(V^{\bl X}_i)^*{|_\m H}=S_i^*$ and $ V_n^*{|_\m H}=S_n^*$ for $i=1,\ldots, n-1.$

 Since $V_n$ is an isometry, in order to show that $(V_1^{\bl X},\ldots,V^{\bl X}_{n-1}, V_n)$ is an Agler-Young isometric dilation of $(S_1,\ldots,S_n)$ it is enough to verify that
 \bea
 V_n^*V_i^{\bl X}=(V_{n-i}^{\bl X})^* \mbox{~for~} i=1,\ldots, n-1.
 \eea
 For for $ i=1,\ldots, n-1,$ note that
 \Bea
&& V_n^*V_i^{\bl X}(h_0,h_1,h_2,\ldots)\\
 &=& V_n^*(S_ih_0,X_{n-i}^*{D}_{S_n}h_0+X_ih_1, X_{n-i}^*h_1+X_ih_2, X_{n-i}^*h_2+X_ih_3,\ldots )\\
 &=& (S_n^*S_ih_0+D_{S_n}X^*_{n-i}D_{S_n}h_0+D_{S_n}X_ih_1,X_{n-i}^*h_1+X_ih_2, X_{n-i}^*h_2+X_ih_3,\ldots)\\
 &=& (S^*_{n-i}h_0+D_{S_n}X_ih_1,X_{n-i}^*h_1+X_ih_2, X_{n-i}^*h_2+X_ih_3,\ldots)\\
 &=&(V_{n-i}^{\bl X})^* (h_0,h_1,h_2,\ldots),
 \Eea
 where for the penultimate equality recall that $S^*_{n-i}=S_n^*S_i+D_{S_n}X_{n-i}^*D_{S_n}$ for $i=1,\ldots, n-1.$

(2) Let us start by writing the block operator matrices of $V_1^{\bl X},V_2^{\bl X},\ldots,V_{n-1}^{\bl X},V_n$. It is evident from their defining formulae that $$V_n=\begin{pmatrix} S_n&0\\D&E
\end{pmatrix} \mbox{ and } V_i^{\bl X} =  \begin{pmatrix} S_i & 0 \\ C_i & Y_i \end{pmatrix}$$ with respect to the decomposition $\mathcal H\oplus \ell^2(\mathcal {D}_{S_n})$ of $\mathcal K_0$, where $D, C_i : \mathcal H \rightarrow \ell^2(\mathcal{D}_{S_n})$ are
$$D=\begin{pmatrix}{D}_{S_n} \\0\\0\\ \vdots \end{pmatrix} \mbox{ and } C_i = \begin{pmatrix}X_{n-i}^*{D}_{S_n} \\0\\0\\ \vdots \end{pmatrix}$$
and $E, Y_i$ on $\ell^2(\mathcal{D}_{S_n}) $ are
$$E=\begin{pmatrix} 0&0&0&\dots\\I&0&0&\dots\\0&I&0&\dots\\\dots&\dots&\dots&\dots \end{pmatrix} \mbox{ and } Y_i = \begin{pmatrix} X_i & 0 & 0 & 0 & \ldots \\
X_{n-i}^* & X_i & 0 & 0 & \ldots \\
0 & X_{n-i}^* & X_i & 0 & \ldots \\
\vdots & \vdots & \vdots & \ldots & \ldots \end{pmatrix}.$$

Let $(W_1, W_2, \ldots ,W_{n-1}, V_n)$ be any Agler-Young isometric dilation for $\underline{S} = (S_1, S_2, \ldots ,S_n)$ on $\clk_0$ such that $\clh$ is a co-invariant subspace.
Because of co-invariance of $\mathcal H$, we have the following matrix form:
$$W_i= \begin{pmatrix} S_i&0\\D_i&E_i \end{pmatrix} \mbox{ for } i=1,\ldots n-1.$$  Since $(W_1, W_2, \ldots, W_{n-1},{V_n})$ is an Agler-Young isometry, we have for $i=1,2, \ldots, n-1$,
\begin{align}
 W_i & =  W_{n-i}^* V_n \nonumber \\
 \mbox{ or, } \begin{pmatrix} S_i&0\\D_i&E_i \end{pmatrix} & =  \begin{pmatrix} S_{n-i}^*&D_{n-i}^*\\0&E_{n-i}^* \end{pmatrix} \begin{pmatrix} S_n&0\\D&E
\end{pmatrix} \nonumber \\
 \mbox{ or, } \begin{pmatrix} S_i&0\\D_i&E_i \end{pmatrix} & =  \begin{pmatrix} S_{n-i}^*S_n + D_{n-i}^* D & D_{n-i}^* E  \nonumber \\ E_{n-i}^*D&E_{n-i}^*E\end{pmatrix} \nonumber
\end{align}
 We get several equations from the above, which we list below
\begin{equation} \label{11}
S_{n-i}^*S_n + D_{n-i}^* D = S_i \mbox{ for } i=1,2, \ldots, n-1.
\end{equation}
\begin{equation} \label{12}
D_{n-i}^*E = 0.
\end{equation}
\begin{equation} \label{21}
E_{n-i}^*D = D_i \mbox{ for } i=1,2, \ldots, n-1.
\end{equation}
\begin{equation} \label{22}
E_{n-i}^* E = E_i \mbox{ for } i=1,2, \ldots, n-1.
\end{equation}
From equation \eqref{12}, we get $E^*D_{n-i} = 0$. Recalling that $E$ is really a shift, this implies that only the first component of $D_{n-i}$ is non-zero. From equation \eqref{11}, it is clear that this first component is $X_iD_{S_n}$. Hence $D_i = C_i$ so that
$$W_i = \begin{pmatrix} S_i&0\\C_i&E_i \end{pmatrix}.$$
We now have to show that $E_i = Y_i$ for $i=1,2, \ldots ,n$. Let
$$ E_i = (( \; A_{ml}^{(i)} \; ))_{m,l=1}^\infty \mbox{ for } i=1,2, \ldots, n-1.$$
The equation \eqref{22} gives us for $i=1,2, \ldots, n-1$,
\begin{align}
E^*E_{n-i} &= E_i^* \nonumber \\
\mbox{ or, } \begin{pmatrix} 0 & I & 0 & 0 & \ldots \\
0 & 0 & I & 0 & \ldots \\
0 & 0 & 0 & I & \ldots \\\
\vdots & \vdots & \vdots & \vdots & \ldots \end{pmatrix} \begin{pmatrix} A_{11}^{(n-i)} & A_{12}^{(n-i)} & A_{13}^{(n-i)} &  \ldots \\
A_{21}^{(n-i)} & A_{22}^{(n-i)} & A_{23}^{(n-i)} & \ldots \\
A_{31}^{(n-i)} & A_{32}^{(n-i)} & A_{33}^{(n-i)} &  \ldots \\
\vdots & \vdots & \vdots &  \ldots \end{pmatrix}
&= \begin{pmatrix} A_{11}^{(i)^*} & A_{21}^{(i)^*} & A_{31}^{(i)^*} &  \ldots \\
A_{12}^{(i)^*} & A_{22}^{(i)^*} & A_{32}^{(i)^*} &  \ldots \\
A_{13}^{(i)^*} & A_{23}^{(i)^*} & A_{33}^{(i)^*} &  \ldots \\
\vdots & \vdots & \vdots & \ldots \end{pmatrix} \nonumber \\
\mbox{ or, } \begin{pmatrix} A_{21}^{(n-i)} & A_{22}^{(n-i)} A_{23}^{(n-i)} &  \ldots \\
A_{31}^{(n-i)} & A_{32}^{(n-i)} A_{33}^{(n-i)} &  \ldots \\
A_{41}^{(n-i)} & A_{42}^{(n-i)} A_{43}^{(n-i)} &  \ldots \\
\vdots & \vdots & \vdots  & \ldots \end{pmatrix}
&= \begin{pmatrix} A_{11}^{(i)^*} & A_{21}^{(i)^*} & A_{31}^{(i)^*} &  \ldots \\
A_{12}^{(i)^*} & A_{22}^{(i)^*} & A_{32}^{(i)^*} &  \ldots \\
A_{13}^{(i)^*} & A_{23}^{(i)^*} & A_{33}^{(i)^*} &  \ldots \\
\vdots & \vdots & \vdots  & \ldots \end{pmatrix} \label{star}  \nonumber \\
\mbox{ or, } A_{(l+1)m}^{(n-i)} &= A_{ml}^{(i)^*} \end{align}
Hence $A_{ml}^{(i)} = A_{(l+1)m}^{(n-i)^*} = A_{(m+1(l+1)}^{(i)}$. So each $E_i$ is a Toeplitz matrix.  Let
$$ E_i = \begin{pmatrix} e_0^{(i)} & e_{-1}^{(i)} & e_{-2}^{(i)} & \ldots \\
e_1^{(i)} & e_0^{(i)} & e_{-1}^{(i)} & \ldots \\
e_2^{(i)} & e_1^{(i)} & e_0^{(i)} & \ldots \\
\vdots & \vdots & \vdots & \ldots \end{pmatrix}.$$
From equation \eqref{21}, we get, for each $i$,
$$(D_{S_n}, 0 , 0 , \ldots) \begin{pmatrix} e_0^{(i)} & e_{-1}^{(i)} & e_{-2}^{(i)} & \ldots \\
e_1^{(i)} & e_0^{(i)} & e_{-1}^{(i)} & \ldots \\
e_2^{(i)} & e_1^{(i)} & e_0^{(i)} & \ldots \\
\vdots & \vdots & \vdots & \ldots \end{pmatrix} = (D_{S_n}X_i, 0 , 0 , \ldots).$$
This leads to $D_{S_n} e_0^i = D_{S_n} X_i$ and $D_{S_n} e_{-k}^{(i)} = 0$ for $k \in \mathbb N$. These two equations mean that $e_0^i =  X_i$ and $e_{-k}^{(i)} = 0$ for $k \in \mathbb N$. So
$$ E_i = \begin{pmatrix} X_i & 0 & 0 & 0 & \ldots \\
e_1^{(i)} & X_i & 0 & 0 & \ldots \\
e_2^{(i)} & e_1^{(i)} & X_i & 0 & \ldots \\
\vdots & \vdots & \vdots & \ldots & \ldots \end{pmatrix}.$$
Also, equation \eqref{star} gives us that
$$A_{21}^{(n-i)} = A_{11}^{(i)^*} = X_i^*, A_{31}^{(n-i)} = A_{12}^{(i)^*} = 0, A_{41}^{(n-i)} = A_{13}^{(i)^*} = 0, \ldots .$$
So,
$$ E_i = \begin{pmatrix} X_i & 0 & 0 & 0 & \ldots \\
X_{n-i}^* & X_i & 0 & 0 & \ldots \\
0 & X_{n-i}^* & X_i & 0 & \ldots \\
\vdots & \vdots & \vdots & \ldots & \ldots \end{pmatrix}.$$
Thus $E_i = Y_i$ and that finishes the proof.

(3) The proof of assertion (3) simply consists of noting that $W_n$ by virtue of being a minimal isometric dilation of $S_n$ is unitarily equivalent to $V_n$. Let the unitary be $U$, i.e., $UW_nU^* = V_n$. Then $(UW_1U^*, UW_2U^*, \ldots , UW_nU^*)$ is an Agler-Young isometric dilation of $(S_1, S_2, \ldots ,S_n)$ with the last component of the dilation being $V_n$. By (2) above, this means that $UW_iU^* = V_i$ and we are done.
\qed

\section{Pure Agler-Young contractions}

In case, $S_n$ is a pure contraction, that is, $S_n^{*^m}$ converges strongly to the zero operator as $m \rightarrow \infty$, we have a simpler form of the dilation. Such an Agler-Young contraction, that is, whose last component is a pure contraction, is called a pure Agler-Young contraction. The following lemma is a dilation result as well as a functional model. Note the specific structure of the Agler-Young isometry that serves as the dilation tuple. We need some background material for it.

 Let $\Theta_{A}$ be the celebrated Sz.-Nagy Foias characteristic function of a contraction $A$. It is a $\mathcal B( \mathcal D_{A}, \mathcal D_{A^*})$ valued function on $\mathbb D$ defined as
$$ \Theta_{A} = [-A + z D_{A^*} (I - zA^*)^{-1} D_{A}]|_{\mathcal D_{A}}.$$
For a complete discussion of its properties and usefulness, see \cite{SNF}. The function $\Theta_A$ induces a multiplier $M_{\Theta_A}$ from $H^2 (\mathcal D_{A})$ into $H^2 ( \mathcal D_{A^*})$, i.e.,
$$ (M_{\Theta_A} f)(z) = \Theta_A(z) f(z) \mbox{ for } f \in H^2 (\mathcal D_{A}).$$
If $A$ is pure, then $M_{\Theta_A}$ is an isometry.

Sz.-Nagy and Foias showed that every pure contraction, say $A$, defined on a Hilbert space $\mathcal H$ is unitarily equivalent to the operator
$$\mathbb {A} =P_{\mathbb H_{A}}(T_z)|_{\mathbb H_{A}} \mbox{ on the Hilbert space } \mathbb H_{A}=(H^2( \mathcal D_{A^*}) \ominus M_{\Theta_A}(H^2(\mathcal D_A)).$$ This is known as the Sz.Nagy-Foias model for a pure contraction. We can use their result to produce the required model for a pure Agler-Young contraction. Let $\theta$ be the characteristic function of the pure contraction $S_n$, i.e., $\theta = \Theta_{S_n}$ in the notation of the above. Let us remember that $M_\theta$ is an isometry because $S_n$ is pure.

\begin{lem} \label{W}
  Let $\underline{S} = (S_1, S_2, \ldots ,S_n)$ be a pure Agler-Young contraction on $\mathcal H$. Suppose $S_n$ is not an isometry and $\dim \mathcal D_{S_n^*} < \infty$. Then there are $n-1$ bounded operators $Y_1, Y_2, \ldots , Y_{n-1}$ on $\mathcal D_{S_n^*}$ such that  $\underline{S}$ is unitarily equivalent to the commuting tuple $\mathbb{S} = (\mathbb S_1, \mathbb S_2, \ldots \mathbb S_n)$ on the function space $ \mathbb H_{\underline{S}} = H^2(\mathcal D_{S_n^*}) \ominus M_{\theta_{S_n}} H^2(\mathcal D_{S_n})$ defined by $\mathbb S_i = P_{\mathbb H_{\underline{S}}} T_{Y_i + zY_{n-i}^*}|_{\mathbb H_{\underline{S}}}$ for $1 \le i \le n-1$ and $\mathbb S_n = P_{\mathbb H_{\underline{S}}} T_z|_{\mathbb H_{\underline{S}}}$.

\end{lem}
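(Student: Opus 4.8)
The plan is to feed the explicit dilation of Theorem \ref{schaeffer} into the Sz.-Nagy--Foias model of the pure contraction $S_n$ and then to read off $\underline S$ as a compression to the model space; the only genuinely new work is to show that the symbols that appear can be taken affine with \emph{constant} operator coefficients.

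First I would invoke Theorem \ref{schaeffer}(1) to realise $\underline S$ as a compression of its minimal Agler--Young isometric dilation $\underline{V}^{\bl X}$ on $\clk_0 = \clh \oplus \ell^2(\m D_{S_n})$, whose last component $V_n$ is the minimal isometric dilation of $S_n$ and for which $\clh$ is co-invariant. Since $S_n$ is a pure contraction, $V_n$ is a pure isometry, so the Sz.-Nagy--Foias theory recalled above supplies a unitary $\Phi \colon \clk_0 \to H^2(\m D_{S_n^*})$ with $\Phi V_n \Phi^* = T_z$ which carries the co-invariant subspace $\clh$ onto $\mathbb H_{\underline{S}} = H^2(\m D_{S_n^*}) \ominus M_\theta H^2(\m D_{S_n})$ (here $\theta = \Theta_{S_n}$) and carries $S_n$ onto $\mathbb S_n = P_{\mathbb H_{\underline{S}}} T_z|_{\mathbb H_{\underline{S}}}$. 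Conjugating the whole tuple by $\Phi$ produces a pure Agler--Young isometry on $H^2(\m D_{S_n^*})$ whose last component is the standard shift $T_z$; because $\dim \m D_{S_n^*} < \infty$, the argument of Theorem \ref{Wold}(c) (equivalently Corollary \ref{PureStructure}) shows that its remaining components are the block Toeplitz operators $T_{z f_i + f_{n-i}^*}$ for uniquely determined fundamental functions $f_1, \ldots, f_{n-1} \in H^\infty(\m B(\m D_{S_n^*}))$. By co-invariance of $\mathbb H_{\underline{S}}$ this already yields
$$ S_i \cong P_{\mathbb H_{\underline{S}}} T_{z f_i + f_{n-i}^*}|_{\mathbb H_{\underline{S}}} \quad\text{and}\quad S_n \cong \mathbb S_n, \qquad i=1,\ldots,n-1. $$

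The main step is then to replace each symbol $z f_i + f_{n-i}^*$ by an affine symbol $Y_i + z Y_{n-i}^*$ with constant coefficients $Y_i \in \m B(\m D_{S_n^*})$, as in the $\Gamma$-contraction model of \cite{BhPSR}. The natural candidates are the fundamental operators of the adjoint tuple: I would first show that $S_i^* - S_{n-i} S_n^*$ factors as $D_{S_n^*} Y_i D_{S_n^*}$ for a bounded operator $Y_i$ on $\m D_{S_n^*}$, which is unique because $\m D_{S_n^*} = \ov{\mathrm{Ran}}\, D_{S_n^*}$. Under the identification $\Phi$ the defect operator $D_{S_n^*}$ corresponds to the compression of $\mathbb H_{\underline{S}}$ onto the subspace of constants, so a truncated-Toeplitz computation on $\mathbb H_{\underline{S}}$ is expected to give $P_{\mathbb H_{\underline{S}}} T_{z f_i + f_{n-i}^*}|_{\mathbb H_{\underline{S}}} = P_{\mathbb H_{\underline{S}}} T_{Y_i + z Y_{n-i}^*}|_{\mathbb H_{\underline{S}}}$: the two symbols should differ by a function that maps $\mathbb H_{\underline{S}}$ into $M_\theta H^2(\m D_{S_n}) + \ov{M_\theta H^2(\m D_{S_n})}$ and therefore induces the zero compression, the matching being carried out by evaluating both operators on $\m D_{S_n^*}$ and propagating with $\mathbb S_n$ by means of the semi-invariance of $\mathbb H_{\underline{S}}$ for $T_z$.

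I expect this last reduction to be the principal obstacle: one must verify that the compression to $\mathbb H_{\underline{S}}$ of the canonical, genuinely non-analytic symbol $z f_i + f_{n-i}^*$ coincides with that of the purely analytic affine symbol $Y_i + z Y_{n-i}^*$, and that the constants so produced are exactly the fundamental operators of $\underline S^*$. This is precisely where the finite dimensionality of $\m D_{S_n^*}$ and the explicit form of the characteristic function $\theta = \Theta_{S_n}$ are used, just as in \cite{BhPSR}. Once the symbols are matched, setting $\mathbb S_i = P_{\mathbb H_{\underline{S}}} T_{Y_i + z Y_{n-i}^*}|_{\mathbb H_{\underline{S}}}$ for $1 \le i \le n-1$ and $\mathbb S_n = P_{\mathbb H_{\underline{S}}} T_z|_{\mathbb H_{\underline{S}}}$ gives $\underline S \cong \mathbb S$, as claimed; when $\underline S$ is commuting the operators $Y_i$ inherit the relations \eqref{simple}, so that $\mathbb S$ is then a commuting tuple.
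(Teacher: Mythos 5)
Your first half is sound and essentially matches the paper: both pass to the minimal Agler--Young isometric dilation, use uniqueness of the minimal isometric dilation of the pure contraction $S_n$ to transplant everything onto $H^2(\m D_{S_n^*})$ with last component $T_z$, and invoke Corollary \ref{PureStructure} to see that the dilation is a canonical Agler--Young isometry $(T_{\v_1},\ldots,T_{\v_{n-1}},T_z)$ with $\v_i = zf_i + f_{n-i}^*$. But the step you yourself flag as ``the principal obstacle'' is exactly the content of the lemma, and your sketch of it does not work as stated. You propose to keep possibly non-constant $f_i$ and to show that the \emph{compression} of $T_{zf_i+f_{n-i}^*}$ to $\mathbb H_{\underline{S}}$ agrees with that of an affine analytic symbol, the coefficients $Y_i$ being ``the fundamental operators of the adjoint tuple.'' This is circular in the paper's logical order: the existence of operators $Y_i$ on $\m D_{S_n^*}$ with $S_i^* - S_{n-i}S_n^* = D_{S_n^*}Y_i^*D_{S_n^*}$ (i.e., that $\underline{S}^*$ is again an Agler--Young tuple) is precisely the \emph{corollary} the paper deduces from Lemma \ref{W}, and it is false for general Agler--Young tuples, so it cannot be assumed at the outset; you give no independent proof of this factorization, nor of the asserted kernel condition for the difference of symbols.

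The paper's resolution is a genuinely different idea which your proposal never touches: the $f_i$ are not merely replaceable by constants, they \emph{are} constants. Since $\mathbb H_{\underline{S}}$ is co-invariant for the whole canonical tuple, its orthocomplement $\mathrm{Ran}\, M_\theta = M_\theta H^2(\m D_{S_n})$ is a common invariant subspace for $T_z$ and all the $T_{\v_i}$, and it is non-trivial precisely because $S_n$ is not an isometry. By Nakazi's theorem \cite{Nakazi}, a Toeplitz operator sharing a non-trivial invariant subspace with $T_z$ must have an \emph{analytic} symbol; since $\v_i = zf_i + f_{n-i}^*$ with $f_{n-i}$ holomorphic, analyticity forces each $f_i$ to be a constant $Y_i$, and the model follows with no symbol-matching needed. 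Note that any correct argument must use the hypothesis that $S_n$ is not an isometry in this essential way -- canonical Agler--Young isometries with non-constant $\bl f$ show the conclusion fails without it -- whereas your proposal uses that hypothesis nowhere, which is a reliable sign that the missing step cannot be completed along the route you describe.
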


\begin{proof}
For any $Y_1, Y_2, \ldots , Y_{n-1} \in \mathcal B(\mathcal D_{S_n^*})$, the tuple
 $$(T_{Y_1 + zY_{n-1}^*}, T_{Y_2 + zY_{n-2}^*}, \ldots , T_{Y_{n - 1}+ zY_1^*}, T_z)$$
 is a canonical Agler-young isometry. Indeed, the associated $f_1, f_2, \ldots ,f_{n-1}$ are constant functions $f_i(z) = Y_i, i=1,2, \ldots ,n-1$.

We shall show that $\underline{S}$ dilates to such an Agler-Young isometry by embedding $\mathcal H$ isometrically into $H^2(\mathcal D_{S_n^*})$ via an isometry $W$ as a proper co-invariant subspace for $\underline{T}$ and showing that
\begin{equation} \label{identification} WS_i^*W^* = T_{Y_i + zY_{n-i}}^*|_{W\mathcal H}, i=1,2, \ldots n-1 \mbox{ and } WS_n^*W^* = T_z^* |_{W\mathcal H}.\end{equation}
Under the isometry, the space $\mathcal H$ is identified with the range of $W$ in $H^2(\mathcal D_{S_n^*})$ and an operator $A$ on $\mathcal H$ is identified with $WAW^*$ on the range of $W$. Hence equation \eqref{identification} will mean that $S_i^*$ is unitarily equivalent to  $T_{Y_i + zY_{n-i}}^*|_{W\mathcal H}$ for $i=1,2, \ldots n-1$ and $S_n^*$ is unitarily equivalent to $T_z^* |_{W\mathcal H}$. That will prove the statement of the lemma.

The isometry $W$  is defined as $(Wh) (z) =  D_{S_n^*} (I - zS_n^*)^{-1} h$.  If we expand the right hand side of the definition of $Wh$, we get the function $\sum_{k=0}^\infty (D_{S_n^*} (S_n^*)^k h) z^k$. Its norm in  $H^2 ( \mathcal D_{S_n^*})$ is
$$\sum_{k=0}^\infty \| D_{S_n^*} (S_n^*)^k h \|^2 = \sum_{k=0}^\infty \langle S_n^k D_{S_n^*}^2 (S_n^*)^k h , h\rangle .$$
This is a telescopic sum and equals $ \| h \|^2 - \lim_{k \rightarrow \infty} \| (S_n^*)^k h  \|^2 = \| h \|^2$. Thus $W$ is an isometry.

If $f(z) = \sum_{k=0}^\infty a_k z^k$ with $a_k \in \mathcal D_{S_n^*}$ is an arbitrary element of $H^2 ( \mathcal D_{S_n^*})$, then for $h \in \mathcal H$,
\begin{align*} \langle W^*f, h \rangle &= \langle W^*(\sum_{k=0}^\infty a_k z^k) , h \rangle \\
&= \langle \sum_{k=0}^\infty a_k z^k , Wh \rangle \\
&= \langle \sum_{k=0}^\infty a_k z^k , \sum_{k=0}^\infty (D_{S_n^*} (S_n^*)^k h) z^k \rangle \\
&= \sum_{k=0}^\infty \langle a_k , D_{S_n^*} (S_n^*)^k h \rangle = \sum_{k=0}^\infty \langle S_n^k D_{S_n^*} a_k ,   h \rangle \end{align*}
so that $W^*f = \sum_{k=0}^\infty  S_n^k D_{S_n^*} a_k$. It immediately follows from this computation that $W^*T_z = S_nW^*$ because $(T_zf)(z) = \sum_{k=0}^\infty a_k z^{k+1} = \sum_{k=1}^\infty a_{k-1} z^{k}$ so that
$$ W^*T_z f = \sum_{k=1}^\infty  S_n^{k} D_{S_n^*} a_{k-1} = S_n \sum_{k=1}^\infty  S_n^{k-1} D_{S_n^*} a_{k-1} = S_nW^*f.$$
 Hence $W\mathcal H$ is a co-invariant subspace of $T_z$. Moreover, $WS_n^*W^* = T_{z}^*|_{W\mathcal H}$. Thus $T_z$ is the minimal isometric dilation of $WS_nW^*$. Consequently, by uniqueness of minimal isometric dilation of a contraction, there is a unitary $U: \mathcal K_0 \rightarrow H^2 ( \mathcal D_{S_n^*})$ such that $UV_nU^* = T_z$ where $\mathcal K_0$ and $V_n$ are as in the last section. This $U$ also fixes $\mathcal H$, i.e., the image under $U$ of the subspace $\mathcal H \oplus 0 \oplus 0 \oplus \ldots $ of $\mathcal K_0$ is $W\mathcal H$. Now, $(UV_1U^*, \ldots ,UV_{n-1}U^*, T_z)$ is an Agler-Young isometry that leaves $W\mathcal H$ co-invariant. Since the last component of this Agler-Young isometry is $T_z$, we know from Corollary \ref{PureStructure} that it is a canonical Agler-Young isometry $\underline{T} = (T_{\v_1}, \ldots ,T_{\v_{n-1}}, T_z)$. It is an Agler-Young isometric dilation of the given $\underline{S}$ because $(V_1, V_2, \ldots ,V_n)$ is so. Range of $W$ is a proper subspace because otherwise $S_n$ will be a shift of some multiplicity, but by assumption it is not an isometry.

To reach the special structure of the $\v_i$ as mentioned in the statement, we need to note that there is a relation between $W$ and $M_\theta$, viz.,
$$WW^* + M_\theta M_\theta^* = I.$$ We are not proving this here in detail. The proof can be done by applying $WW^* + M_\theta M_\theta^*$ on vectors of $H^2(\mathcal D_{S_n^*})$ of the form $\zeta/(1 - z\overline{w})$ (where $\zeta \in \mathcal D_{S_n^*}$) and can also be easily found in the literature, see Lemma 3.3 of \cite{BhP} for example.
Since $W$ and $M_\theta$ both are isometries (in presence of pureness of $S_n$), $WW^*$ and $M_\theta M_\theta^*$ are complementary orthogonal projections. Now it follows from the computations we have done above involving $W$ that there is a unitary between $\mathcal H$ and the range of $W$ which is $\mathbb H_{S_n}$. Moreover, this unitary which is just $W$ mapping $\mathcal H$ onto $\mathbb H_{S_n}$ also conjugates the operators rightly:
$$WS_i^*W^* = T_{\v_i}^*|_{\mathbb H_{S_n}} \mbox{ and } WS_n^*W^* = T_z^*|_{\mathbb H_{S_n}}.$$

 The range of $M_\theta$, which is automatically closed being the range of an isometry, and which equals $(W\mathcal H)^\perp = {\mathbb H_{S_n}}^\perp$ is an invariant subspace of the canonical Agler-Young isometry $(T_{\v_1}, \ldots ,T_{\v_{n-1}}, T_z)$. Thus we are lead to the situation that we have a non-trivial invariant subspace of $T_z$ which is also invariant under the Toeplitz operators $T_{\v_1}, T_{\v_2}, \ldots , T_{\v_{n-1}}$. The non-triviality of the invariant subspace is due to the fact that $S_n$ is not an isometry, i.e., not a shift. A Toeplitz operator and the operator $T_z$ can have a common non-trivial invariant subspace only if the Toeplitz operator has an analytic symbol, see \cite{Nakazi}. Since $\underline{T}$ is a canonical Agler-Young isometry, the $\v_i$ are co-analytic extensions of some $f_1, f_2, \ldots ,f_{n-1}$ which are in fact holomorphic. The only way $\v_i$ can be analytic is if the $f_i$ are constants, say, $Y_i$. Thus, $\v_i(z) = Y_i + zY_{n-i}^*$ for $i=1,2, \ldots ,n-1$.  \end{proof}

The theorem above is a far reaching non-commutative generalization of Theorem 3.1 of \cite{BhP}. In general, an Agler-Young tuple does not enjoy the property that its adjoint tuple is an Agler-Young tuple, any canonical Agler-Young isometry with non-constant $\bl{f}$ is such an example. However, the theorem above allows us to conclude the following about the adjoint tuple.

 \begin{cor}
 Let $\underline{S} = (S_1, S_2, \ldots ,S_n)$ be a pure Agler-Young contraction on $\mathcal H$. Suppose $S_n$ is not an isometry and $\dim \mathcal D_{S_n^*} < \infty$. Then the adjoint tuple $\underline{S}^* = (S_1^*, S_2^*, \ldots ,S_n^*)$ is an Agler-Young contraction.
  \end{cor}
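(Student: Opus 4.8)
The plan is to transport the statement to the functional model of Lemma~\ref{W} and verify the fundamental equations of the adjoint tuple there by a direct compression computation. By Lemma~\ref{W}, $\underline S$ is unitarily equivalent to the tuple $\mathbb S=(\mathbb S_1,\ldots,\mathbb S_n)$ on $\mathbb H:=\mathbb H_{\underline S}=H^2(\mathcal D_{S_n^*})\ominus M_{\theta_{S_n}}H^2(\mathcal D_{S_n})$, where $\mathbb S_i=P\,T_{\varphi_i}|_{\mathbb H}$ with $\varphi_i(z)=Y_i+zY_{n-i}^*$ and $\mathbb S_n=P\,T_z|_{\mathbb H}$, the symbol $P$ denoting the orthogonal projection of $H^2(\mathcal D_{S_n^*})$ onto $\mathbb H$. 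Since unitary equivalence carries an Agler–Young contraction to an Agler–Young contraction (conjugate the fundamental equations \eqref{AYdef} by the implementing unitary), it suffices to prove that $\mathbb S^*=(\mathbb S_1^*,\ldots,\mathbb S_n^*)$ is an Agler–Young contraction. The key structural input, recorded in the proof of Lemma~\ref{W}, is that $\mathbb H$ is co-invariant for the canonical Agler–Young isometry $(T_{\varphi_1},\ldots,T_{\varphi_{n-1}},T_z)$ because $\mathbb H^\perp=\mathrm{Ran}\,M_{\theta_{S_n}}$ is invariant under it. Hence $\mathbb S_i^*=T_{\varphi_i}^*|_{\mathbb H}$ and $\mathbb S_n^*=T_z^*|_{\mathbb H}$, and in particular $\mathbb S_n^*$ is a contraction.

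Next I would isolate the two operators that appear in the fundamental equation for $\mathbb S^*$. Writing $\mathcal E=\mathcal D_{S_n^*}$, let $C=P_{\mathcal E}|_{\mathbb H}:\mathbb H\to\mathcal E$ be the map sending $h$ to its constant Fourier coefficient, where $P_{\mathcal E}=I-T_zT_z^*$ is the projection of $H^2(\mathcal E)$ onto the constants $\mathcal E$. A short computation gives $C^*=P|_{\mathcal E}$ and
\[ D_{\mathbb S_n^*}^2=I-\mathbb S_n\mathbb S_n^*=P(I-T_zT_z^*)|_{\mathbb H}=C^*C. \]
For the left-hand side of the $i$-th equation, co-invariance gives, for $h\in\mathbb H$,
\[ (\mathbb S_i^*-\mathbb S_{n-i}\mathbb S_n^*)h=P\big(T_{\varphi_i}^*-T_{\varphi_{n-i}}T_z^*\big)h. \]
Since the fundamental functions are the constants $Y_i$, they commute with $T_z$ and $T_z^*$, and expanding $\varphi_i(z)=Y_i+zY_{n-i}^*$ (this is also the content of the formula \eqref{determinant} of Corollary~\ref{PureStructure}) yields $T_{\varphi_i}^*-T_{\varphi_{n-i}}T_z^*=Y_i^*(I-T_zT_z^*)=Y_i^*P_{\mathcal E}$. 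Consequently
\[ \mathbb S_i^*-\mathbb S_{n-i}\mathbb S_n^*=P\,Y_i^*P_{\mathcal E}|_{\mathbb H}=C^*Y_i^*C,\qquad i=1,\ldots,n-1. \]

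It then remains to factor $C^*Y_i^*C$ through $D_{\mathbb S_n^*}$. As $D_{\mathbb S_n^*}^2=C^*C$, the positive operator $D_{\mathbb S_n^*}$ is exactly $|C|$, so the polar decomposition of $C$ reads $C=U D_{\mathbb S_n^*}$, with $U$ a partial isometry whose initial space is $\mathcal D_{\mathbb S_n^*}=\overline{\mathrm{Ran}}\,D_{\mathbb S_n^*}$. Therefore
\[ \mathbb S_i^*-\mathbb S_{n-i}\mathbb S_n^*=C^*Y_i^*C=D_{\mathbb S_n^*}\big(U^*Y_i^*U\big)D_{\mathbb S_n^*}, \]
and setting $X_i:=U^*Y_i^*U\in\mathcal B(\mathcal D_{\mathbb S_n^*})$ exhibits the fundamental equations for $\mathbb S^*$. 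Thus $\mathbb S^*$, and hence $\underline S^*$, is an Agler–Young contraction.

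The step I expect to demand the most care is the factorization in the last paragraph: one must check that $C^*Y_i^*C$ genuinely has the sandwiched form $D_{\mathbb S_n^*}X_iD_{\mathbb S_n^*}$ with $X_i$ bounded on the defect space. This is precisely where the identity $D_{\mathbb S_n^*}^2=C^*C$ and the polar decomposition of $C$ are indispensable; the finite-dimensionality hypothesis on $\mathcal D_{S_n^*}$ is comforting here, since it forces $\overline{\mathrm{Ran}}\,C=\mathcal E$ and makes $U$ a unitary from $\mathcal D_{\mathbb S_n^*}$ onto $\mathcal E$, though a partial isometry already suffices for the boundedness of $X_i$.
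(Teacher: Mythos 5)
Your proof is correct, and for most of its length it travels the paper's own road: both arguments transport $\underline{S}$ to the model of Lemma \ref{W}, use co-invariance of the model space under the canonical Agler--Young isometry to write $\mathbb S_i^* = T_{\varphi_i}^*|_{\mathbb H}$ and $\mathbb S_n^* = T_z^*|_{\mathbb H}$, and then exploit constancy of the $Y_i$ to collapse $T_{\varphi_i}^* - T_{\varphi_{n-i}}T_z^*$ to $P_{\mathbb C}\otimes Y_i^*$, so that $\mathbb S_i^* - \mathbb S_{n-i}\mathbb S_n^*$ becomes the compression of $P_{\mathbb C}\otimes Y_i^*$ to the model space. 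The genuine divergence is in how that compression is recognized as having the sandwiched form demanded by \eqref{AYdef}. The paper settles it by citing Theorem 5.1 of \cite{S}, which yields the stronger identity $P_{\mathrm{Ran}\,W}(P_{\mathbb C}\otimes Y_i^*)|_{\mathrm{Ran}\,W} = WD_{S_n^*}Y_i^*D_{S_n^*}W^*|_{\mathrm{Ran}\,W}$ and hence pins down the fundamental operator tuple of $\underline{S}^*$ as $(Y_1^*,\ldots,Y_{n-1}^*)$ itself. You instead prove $D_{\mathbb S_n^*}^2 = C^*C$ and run the polar decomposition $C = UD_{\mathbb S_n^*}$, obtaining the fundamental equations with $X_i = U^*Y_i^*U$. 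Your closing step is self-contained --- it removes the dependence on \cite{S} --- and it is all that the corollary as stated requires; what it gives up is the explicit identification of the fundamental operators, which you recover only up to conjugation by the partial isometry $U$. One peripheral remark of yours is slightly off: finite-dimensionality of $\mathcal D_{S_n^*}$ is not what forces $\overline{\mathrm{Ran}}\,C = \mathcal E$. That holds in any dimension, because a constant $\xi\in\mathcal E$ is orthogonal to $\mathbb H = \mathrm{Ran}\,W$ precisely when $W^*\xi = D_{S_n^*}\xi = 0$, and $D_{S_n^*}$ is injective on $\mathcal D_{S_n^*}$ since it is self-adjoint with $\ker D_{S_n^*} = \mathcal D_{S_n^*}^{\perp}$. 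The finite-dimensionality hypothesis enters your argument, exactly as it enters the paper's, only because Lemma \ref{W} requires it.
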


 \begin{proof}
 From the dilation result above for a pure Agler-Young contraction, we infer that there are $Y_1, Y_2, \ldots , Y_{n-1} \in \mathcal B ( \mathcal D_{S_n^*})$ such that equation \eqref{identification} holds where $W : \mathcal H \rightarrow H^2 ( \mathcal D_{S_n^*})$ is as defined before. These $Y_i$ will be shown to satisfy
 $$S_i^* - S_{n-i}S_n^* = D_{S_n^*} Y_i^* D_{S_n^*}.$$

 For simpler computations, we shall use the identification of the Hilbert space $H^2 ( \mathcal D_{S_n^*})$ with the tensor product $H^2 \otimes  \mathcal D_{S_n^*}$ where $H^2$ is the Hardy space of scalar valued functions on $\mathbb D$. In this picture, $T_z$ denotes multiplication by $z$ on $H^2$ and
 $$ Wh = \sum_{k=0}^{\infty} z^k \otimes D_{S_n^*} S_n^{*K}h, W^*(T_z\otimes I) = S_nW^* \mbox{ and } W^*(I \otimes Y_i + T_z\otimes Y_{n-i}^*) = S_iW^*.$$
 Hence $S_i^* - S_{n-i}S_n^*$ on $\mathcal H$ is identified on the range of $W$ with
 \begin{align*}
 & W (S_i^* - S_{n-i}S_n^*)W^* |_{\rm{Ran} W} \\
 = &  (I \otimes Y_i + T_z\otimes Y_{n-i}^*)^*WW^* - P_{\rm{Ran} W} (I \otimes Y_{n-i} + T_z\otimes Y_{i}^*) (T_z\otimes I)^* WW^*|_{\rm{Ran} W} \\
 = & P_{\rm{Ran} W} (I \otimes Y_i^* + T_z^* \otimes Y_{n-i} - T_z^* \otimes Y_{n-i} - T_zT_z^*\otimes Y_{i}^*) (T_z\otimes I)^* WW^* |_{\rm{Ran} W}\\
 = & P_{\rm{Ran} W} (P_{\mathbb C} \otimes Y_i^*)|_{\rm{Ran} W} \end{align*}
 where $P_{\mathbb C}$ is the projection in $H^2$ onto the one-dimensional subspace of constants.
 The penultimate line above is reached by co-invariance of the range of $W$ by all the Toeplitz operators $I \otimes Y_i + T_z\otimes Y_{n-i}^*$ and $T_z \otimes I$. Now, it is known that $P_{\rm{Ran} W} (P_{\mathbb C} \otimes Y_i^*)|_{\rm{Ran} W} = WD_{S_n^*} Y_i^* D_{S_n^*} W^*|_{\rm{Ran} W}$, see Theorem 5.1 in \cite{S}. Hence we are done.
 \end{proof}

Since the discussion in this section so far has greatly depended on closed subspaces of $H^2(\mathcal E)$ that are invariant under $T_z$ as well as under all $T_{\v_i}$, we can ask the question of whether there is a description of such an invariant subspace. The following proposition answers that question. Recall that by Beurling-Lax-Halmos theorem, for any closed subspace $\mathcal M$ of $H^2 \otimes \mathcal E$ that is invariant under $T_z$, there is an auxiliary space $\mathcal F$ and a $\mathcal B ( \mathcal F , \mathcal E)$ valued inner function $\theta$ on $\mathbb D$ (in fact, the $\theta$ could be taken to be the characteristic function of a certain pure contraction) such that $\mathcal M =$ Ran$ M_\theta$. This $\theta$ is called the Beurling-Lax-Halmos function of the subspace $\mathcal M$.

\begin{prop}
  Let $\mathcal E$ be a Hilbert space, let $f_1, f_2, \ldots ,f_{n-1}$ be functions from $H^\infty(\mathcal B (\mathcal E))$ and let $\v_1, \v_2, \ldots ,\v_{n-1}$ be their co-analytic extensions. Let $\mathcal M$ be a closed subspace of $H^2 \otimes \mathcal E$ that is invariant under $T_z$. Let $\theta$ be the Beurling-Lax-Halmos function of $\mathcal M$ with $\mathcal F$ being the auxiliary space. Then $\mathcal M$ is invariant under all the $T_{\v_i}$ if and only if there is a unique tuple $(g_1, g_2, \ldots , g_{n-1})$ from $H^\infty(\mathcal B (\mathcal F))$ such that its co-analytic extension tuple $(\psi_1, \psi_2, \ldots ,\psi_{n-1})$ satisfies
  $$ \v_i(z) \theta(z) = \theta(z) \psi_i(z) \mbox{ for all } z \in \mathbb D  \mbox{ and for all } i=1,2, \ldots ,n-1.$$
\end{prop}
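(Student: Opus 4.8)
The plan is to exploit the Beurling--Lax--Halmos representation $\clm = \mathrm{Ran}\, M_\theta$, where $M_\theta : H^2\otimes\mathcal F \to H^2\otimes\mathcal E$ is an isometry (so $M_\theta^* M_\theta = I$ and $M_\theta M_\theta^* = P_\clm$), together with the structure theory for pure Agler--Young isometries already developed. Since $\theta$ is holomorphic, $M_\theta$ intertwines the two shifts, $M_\theta T_z = T_z M_\theta$; this is the backbone of the whole argument. First I would prove necessity. Assume $\clm$ is invariant under $T_z$ and under every $T_{\v_i}$. The tuple $(T_{\v_1},\dots,T_{\v_{n-1}},T_z)$ on $H^2\otimes\mathcal E$ is the canonical (hence pure) Agler--Young isometry attached to $\bl f$, and by the restriction principle established in the introduction its compression to the common invariant subspace $\clm$ is again an Agler--Young isometry; as $T_z|_\clm$ is a pure isometry, this restricted tuple is a \emph{pure} Agler--Young isometry. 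Conjugating by the unitary $M_\theta : H^2\otimes\mathcal F\to\clm$ produces a pure Agler--Young isometry $(G_1,\dots,G_{n-1},T_z)$ on $H^2\otimes\mathcal F$ with $G_i = M_\theta^* T_{\v_i} M_\theta$ and last coordinate the shift.

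The relation \eqref{ar2} applied to this tuple reads $T_z^* G_i T_z = G_i$, so the Brown--Halmos characterization of block Toeplitz operators identifies each $G_i$ as $T_{\psi_i}$ for some $\psi_i\in L^\infty(\mathcal B(\mathcal F))$ (this route avoids any finite--dimensionality hypothesis; where $\dim\mathcal F<\infty$ one may instead quote Corollary \ref{PureStructure} directly). The remaining Agler--Young relation $G_i = G_{n-i}^* T_z$ then forces $\psi_i(z)=z\psi_{n-i}(z)^*$, whence the Fourier--coefficient computation carried out in the proof of Theorem \ref{Wold} exhibits $\psi_i$ as the co-analytic extension of a unique $g_i\in H^\infty(\mathcal B(\mathcal F))$. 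Invariance finally upgrades the compression formula to the genuine intertwining $T_{\v_i} M_\theta = M_\theta T_{\psi_i}$, because $M_\theta M_\theta^* T_{\v_i} M_\theta = T_{\v_i} M_\theta$.

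It then remains to pass between this operator identity and the pointwise relation $\v_i\theta = \theta\psi_i$ on $\D$. Writing $L_\alpha$ for the Laurent (bilateral) multiplication operator with symbol $\alpha$, the pointwise relation is exactly $L_{\v_i}L_\theta = L_\theta L_{\psi_i}$, whereas $T_{\v_i}M_\theta = M_\theta T_{\psi_i}$ is its compression to $H^2$. I would recover the symbol identity by testing the intertwining against reproducing-kernel sections $k_w\otimes\zeta$ and using the separate actions of the analytic part $T_{f_i}T_z$ and the co-analytic part $T_{f_{n-i}}^*$ of $T_{\v_i}$ on such sections (and likewise for $\psi_i$), reading off $\v_i(w)\theta(w)=\theta(w)\psi_i(w)$ for every $w\in\D$. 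For sufficiency I would run this step in reverse: given $g_i$ with co-analytic extensions $\psi_i$ satisfying $\v_i\theta=\theta\psi_i$, I would reconstruct $T_{\v_i}M_\theta = M_\theta T_{\psi_i}$, reducing via $T_{\v_i}=T_{f_{n-i}}^*T_z$ and $T_zM_\theta=M_\theta T_z$ to a co-analytic multiplier intertwiner, and then conclude $T_{\v_i}(\mathrm{Ran}\,M_\theta)\subseteq\mathrm{Ran}\,M_\theta$.

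Uniqueness of $(g_1,\dots,g_{n-1})$ is then immediate: the $\psi_i$ are pinned down by $G_i=M_\theta^*T_{\v_i}M_\theta$ (or, in the sufficiency direction, by left-invertibility of the inner $\theta$ in $\theta\psi_i=\theta\psi_i'$), and each $g_i$ is the holomorphic part of $\psi_i$. The step I expect to be the main obstacle is precisely the dictionary of the preceding paragraph: controlling the negative-frequency, Hankel-type contributions of the co-analytic symbols so that the Toeplitz-level intertwining and the pointwise identity on $\D$ are genuinely equivalent. Innerness of $\theta$, which makes $\theta(w)$ left-invertible and gives $M_\theta^*M_\theta=I$, is the indispensable ingredient here; without it the compressed and the Laurent identities no longer coincide, so the care lies entirely in showing that the co-analytic parts are compatible through an inner multiplier.
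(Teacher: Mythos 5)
The necessity half of your plan is essentially the paper's proof: compress by the Beurling--Lax--Halmos isometry $M_\theta$, recognize $\bigl(M_\theta^*T_{\v_1}M_\theta,\ldots,M_\theta^*T_{\v_{n-1}}M_\theta,T_z\bigr)$ as a pure Agler--Young isometry on $H^2(\mathcal F)$ (the paper gets this by the direct computation $(M_\theta^*T_{\v_{n-i}}M_\theta)^*T_z=M_\theta^*T_{\v_{n-i}}^*T_zM_\theta=M_\theta^*T_{\v_i}M_\theta$, without needing invariance at that stage), identify it as a canonical one $(T_{\psi_1},\ldots,T_{\psi_{n-1}},T_z)$, and use invariance to pass from $M_\theta M_\theta^*T_{\v_i}M_\theta=M_\theta T_{\psi_i}$ to $T_{\v_i}M_\theta=M_\theta T_{\psi_i}$. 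Your remark that Corollary \ref{PureStructure} requires $\dim\mathcal F<\infty$, so that a block Brown--Halmos argument is needed in general, concerns a point the paper silently glosses over; that part of your write-up is, if anything, more careful than the original. But note where the paper's proof \emph{ends}: at the operator identity $T_{\v_i}M_\theta=M_\theta T_{\psi_i}$. The paper reads the proposition's displayed equation as exactly this multiplier identity, which is also why its sufficiency direction is ``clear'': $T_{\v_i}\mathcal M=T_{\v_i}M_\theta H^2(\mathcal F)=M_\theta T_{\psi_i}H^2(\mathcal F)\subseteq\mathcal M$.

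The genuine gap in your proposal is the ``dictionary'' you yourself flag as the main obstacle: the claimed equivalence between the Toeplitz-level intertwining and the pointwise identity on $\mathbb D$. That equivalence is not merely delicate, it is false in both directions, so neither your reproducing-kernel ``read-off'' nor the reverse reconstruction can work. For the direction your sufficiency proof needs (pointwise $\Rightarrow$ intertwining), take $n=2$, $\mathcal E=\mathcal F=\mathbb C$, $\theta(z)=z$, $f_1(z)=z$, $g_1=f_1$: then $\psi_1=\v_1=z^2+\bar z$, so $\v_1\theta=\theta\psi_1$ at every point of $\mathbb D$ (and a.e.\ on $\mathbb T$) simply because scalars commute, yet $\mathcal M=zH^2$ is not invariant under $T_{\v_1}$, since $T_{\v_1}z=P_+(z^3+1)=z^3+1\notin zH^2$. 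For the direction your kernel argument needs (intertwining $\Rightarrow$ pointwise on $\mathbb D$), take $\mathcal E=\mathbb C^2$, $\theta(z)=\bigl(\begin{smallmatrix}z&0\\0&1\end{smallmatrix}\bigr)$, $f_1(z)=\bigl(\begin{smallmatrix}0&z\\0&0\end{smallmatrix}\bigr)$: then $\mathcal M=zH^2\oplus H^2$ is invariant and $T_{\v_1}M_\theta=M_\theta T_{\psi_1}$ holds with $\psi_1(z)=\bigl(\begin{smallmatrix}0&z\\1&0\end{smallmatrix}\bigr)$, the co-analytic extension of the constant $g_1=\bigl(\begin{smallmatrix}0&1\\0&0\end{smallmatrix}\bigr)$, but the $(2,1)$ entries of $\v_1(z)\theta(z)$ and $\theta(z)\psi_1(z)$ are $|z|^2$ and $1$, which differ at every point of $\mathbb D$ and agree only on $\mathbb T$ (and one checks that no co-analytic extension at all satisfies the pointwise identity in this example). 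So the Hankel-type obstruction you worried about is real and cannot be argued away. The only reading under which the proposition is true is the operator identity $T_{\v_i}M_\theta=M_\theta T_{\psi_i}$ itself; with that reading your necessity argument is already complete, sufficiency is the one-line range computation above, and the entire final stage of your plan --- the kernel read-off and ``running the dictionary in reverse'' --- should be removed rather than repaired.
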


\begin{proof}
Clearly, the condition is sufficient for $\mathcal M$ to be simultaneously invariant under $T_{\v_1}, T_{\v_2}$, \ldots ,$T_{\v_n}$. It is the necessity that we need to prove. To that end, we do the following computation involving operators on $H^2(\mathcal F)$.
$$(M_\theta^* T_{\v_{n-i}} M_\theta)^* T_z = M_\theta^* T_{\v_{n-i}}^* M_\theta T_z = M_\theta^* T_{\v_{n-i}}^* T_z M_\theta = M_\theta^* T_{\v_i} M_\theta.$$
Consequently, the tuple $(M_\theta^* T_{\v_1} M_\theta, M_\theta^* T_{\v_2} M_\theta, \ldots ,M_\theta^* T_{\v_n} M_\theta)$
is a pure Agler-Young isometry. By Corollay \ref{PureStructure}, it is a canonical Agler-Young isometry ($T_{\psi_1}, T_{\psi_2}$, \ldots ,$T_{\psi_n}$), say. Since $M_\theta^* T_{\v_i} M_\theta = T_{\psi_i}$, we have $M_\theta M_\theta^* T_{\v_i} M_\theta = M_\theta T_{\psi_i}$. By virtue of the fact that $\mathcal M$ is an invariant subspace for $T_{\v_i}$, the projection $M_\theta M_\theta^*$ in the last equation is redundant. Hence $T_{\v_i} M_\theta = M_\theta T_{\psi_i}$. \end{proof}

\section{A von Neumann type inequality}

von Neumann proved that for any contraction $T$ and any polynomial $p$, one has
\begin{equation} \label{vN} \| p(T) \| \le \| p \|_\infty \end{equation}
where $\| p \|_\infty = \sup \{ |p(z)| : z \in \mathbb D\}$.  This is a characterization of contractions that led to the study of spectral and complete spectral sets. The class that we are studying, viz., the Agler-Young class, is defined by a system of operator equations. Does a von Neumann type inequality as in Equation \eqref{vN} characterize the Agler-Young class? This is the question we shall answer in this section by falling back on an argument which originated in \cite{BhPSR} as a beautiful application of the operator version of Fejer-Riesz Theorem. In the following, $w(n)$ denotes a constant, depending on $n$.

\begin{lem} \label{hered} The following are equivalent.
\begin{enumerate}
\item $\underline{S} = (S_1, S_2, \ldots ,S_n) \in AY_n$ with the numerical radius of each $X_i$ being not greater than $w(n)$, a constant depending on $n$,
\item $ w(n)(I - S_n^* S_n) \ge \mathrm{Re} (\exp{i\theta} (S_i - S_{n-i}^*S_n) \mbox{ for all } \theta \in [0,2\pi).$ \end{enumerate} \end{lem}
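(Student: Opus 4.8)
The plan is to prove the two implications separately, treating a fixed index $i$ (the equivalence is understood to hold for each $i$, matching the phrase ``each $X_i$'' in item $(1)$). Throughout I would abbreviate $F_i = S_i - S_{n-i}^*S_n$ and $D = D_{S_n}$, and I would lean on the elementary numerical radius characterization: for a bounded operator $X$ one has $w(X) \le \rho$ if and only if $\mathrm{Re}(e^{i\theta}X) \le \rho I$ for every $\theta \in [0,2\pi)$. This follows at once from the identity $|\langle Xv,v\rangle| = \sup_\theta \langle \mathrm{Re}(e^{i\theta}X)v,v\rangle$ and requires no machinery.

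For $(1)\Rightarrow(2)$, which is the routine direction, I would use that $\underline{S}\in AY_n$ supplies the fundamental equation $F_i = D X_i D$. Since $D$ is self-adjoint, $\mathrm{Re}(e^{i\theta}F_i) = D\,\mathrm{Re}(e^{i\theta}X_i)\,D$. The hypothesis $w(X_i)\le w(n)$ gives $\mathrm{Re}(e^{i\theta}X_i)\le w(n) I$, and conjugating this inequality by $D$ (which preserves operator order, because $\langle D A D h,h\rangle = \langle A(Dh),Dh\rangle$) yields $\mathrm{Re}(e^{i\theta}F_i) \le w(n) D^2 = w(n)(I - S_n^*S_n)$, which is precisely $(2)$.

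The reverse implication $(2)\Rightarrow(1)$ is the real content and the main obstacle, since $(2)$ does not a priori assume that $\underline{S}$ lies in $AY_n$; the factorization must be extracted from the inequality itself. Taking the supremum over $\theta$ in $(2)$ converts it into $|\langle F_i h,h\rangle| \le w(n)\|Dh\|^2$ for all $h$. I would then introduce the sesquilinear form $B(h,k)=\langle F_i h,k\rangle$ and, via the polarization identity together with a homogeneity (scaling $h\mapsto th,\ k\mapsto k/t$) argument, upgrade this diagonal bound to $|B(h,k)| \le 2w(n)\|Dh\|\,\|Dk\|$. The key consequence is that $B$ vanishes whenever $Dh=0$ or $Dk=0$, so $B$ descends to a bounded form on $\mathrm{Ran}\,D \times \mathrm{Ran}\,D$; because $\mathrm{Ran}\,D$ is dense in $\mathcal D_{S_n}$, this extends and is represented by a unique $X_i \in \mathcal B(\mathcal D_{S_n})$ with $\langle X_i Dh, Dk\rangle = \langle F_i h,k\rangle$, that is, $F_i = D X_i D$. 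This exhibits $\underline{S}$ as a member of $AY_n$, and the diagonal estimate $|\langle X_i Dv,Dv\rangle| \le w(n)\|Dv\|^2$ extends by density to $w(X_i)\le w(n)$, closing the loop.

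The delicate step is thus the clean passage from a bound on the quadratic form $B(v,v)$ to a bound on the full form $B(h,k)$ and the subsequent factorization through the generally non-closed, non-injective range of $D$, so as to obtain a genuine \emph{numerical radius} estimate on $X_i$ rather than a mere norm estimate. This is exactly the difficulty that the operator Fej\'er--Riesz argument of \cite{BhPSR} was tailored to resolve, and I would invoke it (or the hands-on polarization computation sketched above) to secure this numerical radius control.
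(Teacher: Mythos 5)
Your proof is correct, and it differs from the paper's in a way worth recording. The paper's proof consists of nothing more than two applications of a quoted black box, Lemma 4.1 of \cite{BhPSR}: for bounded operators $\Sigma, D$ one has $DD^* \ge \mathrm{Re}(e^{i\theta}\Sigma)$ for all $\theta$ if and only if $\Sigma = DFD^*$ for some $F$ acting on $\overline{\mathrm{Ran}}\,D^*$ with numerical radius at most one; both implications of the present lemma follow by taking $\Sigma = F_i = S_i - S_{n-i}^*S_n$ and $D = \sqrt{w(n)}\,D_{S_n}$, so that $X_i = w(n)F$. Your forward direction is the same conjugation computation, just done by hand. Your backward direction, however, replaces the citation (whose proof in \cite{BhPSR} goes through the operator Fej\'er--Riesz theorem) by a self-contained elementary argument: polarization plus the parallelogram law give $|\langle F_i h,k\rangle| \le w(n)\big(\|D_{S_n}h\|^2 + \|D_{S_n}k\|^2\big)$, the scaling $h \mapsto th$, $k \mapsto k/t$ optimizes this to $2w(n)\|D_{S_n}h\|\,\|D_{S_n}k\|$, the resulting bounded form factors through $\overline{\mathrm{Ran}}\,D_{S_n} = \mathcal D_{S_n}$ to produce $X_i$ with $F_i = D_{S_n}X_iD_{S_n}$, and the genuine numerical radius bound $w(X_i) \le w(n)$ (rather than only the norm bound $2w(n)$) is recovered from the diagonal estimate by density. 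I checked the constants; they work. So your route in effect reproves the key lemma of \cite{BhPSR} (for self-adjoint $D$, which is all that is needed here) without any Fej\'er--Riesz machinery, at the cost of length, while the paper's route buys brevity at the cost of opacity. One step you should make explicit in the converse direction: before $D_{S_n}$ can even be written down, condition (2) must be seen to force $S_n$ to be a contraction --- averaging the inequality at $\theta$ and $\theta + \pi$ gives $w(n)(I - S_n^*S_n) \ge 0$; the paper leaves the same point implicit.
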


\begin{proof} The proof uses the following lemma.

\begin{lem}[Lemma 4.1 of \cite{BhPSR}] Let $\Sigma$ and $D$ be two bounded operators on $\clh$. Then
$$DD^* \ge \mathrm{Re} (\exp{i\theta} \Sigma) \mbox{ for all } \theta \in [0,2\pi)$$
if and only if there is an $F \in \mathcal B(\mathcal D_*)$ with numerical radius of $F$ not greater than one such that $\Sigma = DFD^*$, where $\mathcal D_* = \overline{\rm Ran} D_*$. \end{lem}

 For our purpose, let $\Sigma_i = S_i - S_{n-i}^*S_n$. Assuming (1) above, we know that $\Sigma_i = D_{S_n} X_i D_{S_n}$ for some $X_i$ with $w(X_i) \le w(n)$. Hence by the lemma above, we have
$$ w(n)(I - S_n^* S_n) \ge Re (\exp{i\theta} (S_i - S_{n-i}^*S_n) \mbox{ for all } \theta \in [0,2\pi).$$
Conversely, if we assume (2) above and want to prove (1), we apply the lemma again which guarantees the existence of an $X_i \in \mathcal B (\mathcal D_{S_n})$ such that $\Sigma_i = D_{S_n} X_i D_{S_n}$ and $w(X_i) \le w(n)$.
\end{proof}

The characterization obtained in Lemma \ref{hered} allows us to link the Agler-Young class to one of Agler's landmark paper \cite{AglerFamily} where he outlined an abstract approach to model theory.

\begin{defn}
Let $\mathcal P$ be the ring of all polynomials over the complex field in the non-commuting variables $(\underline{z}, \underline{z}^* )= (z_1, z_2, \ldots ,z_n, z_1^*, z_2^*, \ldots ,z_n^*)$. The involution on the algebra $\mathcal P$ is:
$$(z_i)^* = z_i^*, (z_i^*)^* = z_i \mbox{ and } (uv)^* = v^* u^*$$
for $i=1,2, \ldots ,n$ and for any words $u, v$ in the non-commuting variables. A polynomial is called hereditary if in its monomials, all $z_i^*$ appear before all the $z_j$. \end{defn}

The hereditary polynomials have found many uses in operator theory ever since they were introduced by Agler in \cite{AglerFamily}, we mention here a relevant few.

\begin{enumerate}

\item A contraction $T$ is characterized by $h(T, T^*) \ge 0$ where $h(z, z^*) = 1 - z^* z$,
\item A spherical contraction $\underline{T} = (T_1, T_2, \ldots T_n)$ is characterized by $h(\underline{T}, \underline{T}^*) \ge 0$ where $h(\underline{z}, \underline{z}^*) = 1 - z_1^* z_1 - \cdots - z_n^*z_n$ (see \cite{RS}),
\item A $\Gamma_n$-contraction, that is, a commuting tuple of bounded operators $\underline{T} = (T_1, T_2, \ldots T_n)$ having the symmetrized polydisc as a spectral set satisfies $h(\underline{T}, \underline{T}^*) \ge 0$ where $h(\underline{z}, \underline{z}^*) = \sum_{i,j=0}^n (-1)^{i+j} \{n- (i+j)\} z_i^*z_j$ \cite[Proposition 2.18]{SS}. \end{enumerate}

Now, we can re-write Lemma \ref{hered} as follows.

\begin{theorem}[\textbf{Characterization in terms of hereditary polynomials}] \label{ChHered}
For every $n \ge 2$, there is a set of hereditary polynomials $h_{\alpha, i}$ indexed by $ (\alpha, i) \in \mathbb T \times \{1,2, \ldots ,n\}$ such that $\underline{S} = (S_1, S_2, \ldots ,S_n) \in AY_n$ with $w(X_i) \le w(n)$ for each $i$ if and only if $h_{\alpha, i} (\underline{S}) \ge 0$. \end{theorem}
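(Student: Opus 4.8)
The plan is to recognise that Theorem \ref{ChHered} is nothing more than Lemma \ref{hered} rewritten in the language of Agler's hereditary functional calculus: all the analytic content (the Fej\'er--Riesz / numerical-radius argument) already sits inside Lemma \ref{hered}, so what remains is to package condition (2) of that lemma as the positivity of an explicit family of hereditary polynomials. First I would fix $\alpha \in \mathbb{T}$ (playing the role of $\exp(i\theta)$) and, for $i = 1, 2, \ldots, n-1$, write down the candidate polynomial
\[
h_{\alpha,i}(\underline{z},\underline{z}^*) = w(n)\,(1 - z_n^* z_n) - \tfrac{1}{2}\bigl(\alpha\,(z_i - z_{n-i}^* z_n) + \overline{\alpha}\,(z_i^* - z_n^* z_{n-i})\bigr).
\]

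The key verification, routine but to be done carefully, is that each monomial occurring here --- namely $z_n^* z_n$, $z_i$, $z_{n-i}^* z_n$, $z_i^*$ and $z_n^* z_{n-i}$ --- has all its starred letters to the left of all its unstarred ones, so that $h_{\alpha,i}$ is genuinely hereditary. Under the hereditary functional calculus the substitution $z_j \mapsto S_j$, $z_j^* \mapsto S_j^*$ is then unambiguous and yields
\[
h_{\alpha,i}(\underline{S}) = w(n)\,(I - S_n^* S_n) - \mathrm{Re}\bigl(\alpha\,(S_i - S_{n-i}^* S_n)\bigr),
\]
since $\mathrm{Re}(\alpha\Sigma) = \tfrac{1}{2}(\alpha\Sigma + \overline{\alpha}\Sigma^*)$ and $(S_i - S_{n-i}^* S_n)^* = S_i^* - S_n^* S_{n-i}$. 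Thus $h_{\alpha,i}(\underline{S}) \ge 0$ for every $\alpha \in \mathbb{T}$ is exactly condition (2) of Lemma \ref{hered}.

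It remains to account for the extra index $i = n$, for which the fundamental-equation expression $S_i - S_{n-i}^* S_n$ is undefined. Here I would simply set $h_{\alpha,n}(\underline{z},\underline{z}^*) = 1 - z_n^* z_n$ (independent of $\alpha$), so that $h_{\alpha,n}(\underline{S}) \ge 0$ records that $S_n$ is a contraction --- a constraint built into the very definition of $AY_n$, since $D_{S_n}$ must exist for the fundamental operators $X_i$ to make sense. With the polynomials so defined, the equivalence is immediate from Lemma \ref{hered}: positivity of all $h_{\alpha,i}(\underline{S})$ forces $S_n$ to be a contraction (from $i = n$) and reproduces condition (2) (from $i = 1, \ldots, n-1$ and all $\alpha$), which by Lemma \ref{hered} is equivalent to $\underline{S} \in AY_n$ with $w(X_i) \le w(n)$; conversely, membership in $AY_n$ with this numerical-radius bound returns all of these inequalities.

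The main obstacle is conceptual rather than technical: one must be scrupulous about the ordering of the non-commuting variables so that the written polynomial, evaluated by the hereditary rule (all $z_j^*$ to the left), really reconstructs the operator $S_{n-i}^* S_n$ and its adjoint rather than some other arrangement, and one must remember to encode the contraction of $S_n$ as a separate polynomial --- which is precisely why the index set is enlarged from $\{1,\ldots,n-1\}$ to $\{1,\ldots,n\}$. One could even observe that, when $w(n) > 0$, averaging condition (2) at $\theta$ and $\theta + \pi$ already forces $I - S_n^* S_n \ge 0$, so the $i = n$ polynomial is strictly redundant; but including it keeps the characterisation clean and self-contained.
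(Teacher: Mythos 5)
Your proposal is correct and takes essentially the same route as the paper: the paper's entire proof consists of exhibiting the polynomial $h_{\alpha,i}(\underline{z},\underline{z}^*) = 2w(n)(1-z_n^*z_n) - \alpha(z_i - z_{n-i}^*z_n) - \overline{\alpha}(z_i^* - z_n^*z_{n-i})$ --- exactly twice yours, hence the same positivity condition --- and appealing to Lemma \ref{hered}. Your explicit handling of the index $i=n$ (and the observation that averaging over $\theta$ and $\theta+\pi$ makes it redundant) fills in a detail the paper's one-line proof silently glosses over, since its displayed formula is undefined at $i=n$.
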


\begin{proof} Take  $h_{\alpha, i}(\underline{z}, \underline{z}^*) = 2w(n)(1 - z_n^*z_n) - \alpha (z_i - z_{n-i}^*z_n) - \overline{\alpha} (z_i^* - z_n^*z_{n-i})$. \end{proof}

It is known from Agler's work that a class characterized by hereditary polynomials must be a family. We can prove it directly for the Agler-Young class.
\begin{defn}

For $n \ge 1$, a family $\mathcal F$ is a collection of $n$-tuples $\underline{T} = (T_1, T_2, \ldots , T_n)$ of Hilbert space operators (acting on $\clh$ say), which is

\begin{enumerate}

\item[(1)] bounded, that is, $\| T_i \| \le c$ for some constant $c$ for all $i=1,2, \ldots ,n$,

\item[(2)] closed under restriction to invariant subspaces, that is, if $\underline{T} \in \mathcal F$ and if $\clm \subset \clh$ is an invariant subspace for each $T_i$, then $(T_1|_\clm, T_2|_\clm, \ldots ,T_n|_\clm) \in \mathcal F$,

\item[(3)] closed under direct sum, that is, if $\underline{T}^{(m)} \in \mathcal F$, then the tuple
$$ \underline{T} \bydef \oplus_{m=1}^\infty \underline{T}^{(m)} = (\oplus_{m=1}^\infty T^{(m)}_1, \oplus_{m=1}^\infty T^{(m)}_2, \ldots , \oplus_{m=1}^\infty T^{(m)}_n)$$
is in $\mathcal F$.

\item[(4)] closed under $*$-representation, that is, if $\pi$ is a unital $*$-representation and $\underline{T} \in \mathcal F$, then $\pi(\underline{T}) \in \mathcal F$.
\end{enumerate} \end{defn}

Agler defined it only for a single variable, although he mentioned that the concept generalizes effortlessly to several variables. Since then it has found widespread use: we mention the works of Dritschel and McCullough who used the family of $\rho$-contractions in \cite{DM} and Richter and Sundberg who used the family of commuting spherical isometries in \cite{RS}. Our concern is with the following collection:
\begin{eqnarray*}
\mathcal F_n &=& \{ (S_1^*, S_2^*, \ldots ,S_n^*) : \underline{S} = (S_1, S_2, \ldots ,S_n) \in AY_n \mbox{ with each } \\
& & S_i \mbox{ being norm bounded by a constant } c(n) \} \\
 &=& \{ (S_1^*, S_2^*, \ldots ,S_n^*) : \underline{S} = (S_1, S_2, \ldots ,S_n) \in AY_n, \| S_i \| \le c(n) \}. \end{eqnarray*}

It is straightforward that the bound $c(n)$ on the norm of each $S_i$ actually places a restriction on the numerical radius of of each $X_i$, i.e., there is a constant $w(n)$ such that $w(X_i) \le w(n)$ for each $i$.

\begin{lem} $\mathcal F_n $ is a family. \end{lem}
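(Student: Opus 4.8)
The plan is to verify, one at a time, the four defining properties of a family for the collection $\mathcal{F}_n$, throughout exploiting the elementary fact that passing to adjoints interchanges invariant and co-invariant subspaces, together with the dilation theory already developed in Section 4. Properties (1), (3) and (4) are the routine ones, and I would dispatch them first. For boundedness (1), if $(S_1^*,\ldots,S_n^*)\in\mathcal{F}_n$ then $\|S_i^*\|=\|S_i\|\le c(n)$. For closure under direct sums (3), given $\underline{S}^{(m)}\in AY_n$ with $\|S_i^{(m)}\|\le c(n)$, the sum $\oplus_m S_n^{(m)}$ is again a contraction, $D_{\oplus_m S_n^{(m)}}=\oplus_m D_{S_n^{(m)}}$, and $\oplus_m X_i^{(m)}$ solves the fundamental equations for $\oplus_m\underline{S}^{(m)}$ with numerical radius $\sup_m w(X_i^{(m)})\le w(n)$; since taking adjoints commutes with direct sums, $\oplus_m(\underline{S}^{(m)})^*\in\mathcal{F}_n$. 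For closure under $*$-representations (4), if $\pi$ is a unital $*$-representation then $\pi(S_i^*)=\pi(S_i)^*$, while $\pi(D_{S_n})=D_{\pi(S_n)}$ because $\pi$ respects the continuous functional calculus of the positive operator $I-S_n^*S_n$, and $\pi$ does not increase numerical radius; hence $\pi(\underline{S})\in AY_n$ with the same bounds (extending each $X_i$ by zero off $\mathcal{D}_{S_n}$ so that $\pi$ may be applied), and therefore $\pi(\underline{S}^*)=\pi(\underline{S})^*\in\mathcal{F}_n$.

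The substantive point is property (2), closure under restriction to invariant subspaces, and here I expect the only real obstacle. A direct expansion shows that the hereditary positivity condition of Theorem \ref{ChHered} is in general \emph{not} inherited by the compression of an $AY_n$ tuple to a co-invariant subspace, so one cannot hope to argue at the level of the polynomials $h_{\alpha,i}$ alone. Instead I would argue dilation-theoretically. Let $\underline{R}=\underline{S}^*\in\mathcal{F}_n$ and let $\mathcal{M}$ be invariant under every $R_i=S_i^*$; equivalently, $\mathcal{M}$ is a co-invariant subspace for $\underline{S}\in AY_n$, and the adjoint of the restricted tuple is exactly the compression $(P_\mathcal{M}S_i|_\mathcal{M})_{i=1}^n$.

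By the dilation theorem (Theorem \ref{schaeffer}), the Agler-Young contraction $\underline{S}$ admits an Agler-Young isometric dilation $\underline{V}$ on a space $\mathcal{K}\supseteq\mathcal{H}$ for which $\mathcal{H}$ is co-invariant, so that $V_i^*|_{\mathcal{H}}=S_i^*$. Since $\mathcal{M}\subseteq\mathcal{H}$ and $\mathcal{M}$ is invariant under each $S_i^*=V_i^*|_{\mathcal{H}}$, the subspace $\mathcal{M}$ is co-invariant for the isometric dilation $\underline{V}$ itself, and the two compressions agree: $P_\mathcal{M}V_i|_\mathcal{M}=P_\mathcal{M}S_i|_\mathcal{M}$, using $P_\mathcal{M}P_\mathcal{H}=P_\mathcal{M}$. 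Now $\underline{V}$ is an Agler-Young isometry having $\mathcal{M}$ as a co-invariant subspace, so $\underline{V}$ is an Agler-Young isometric dilation of $(P_\mathcal{M}V_i|_\mathcal{M})_i$, and Lemma \ref{compression} applies directly to give $(P_\mathcal{M}S_i|_\mathcal{M})_i\in AY_n$; the bound survives because $\|P_\mathcal{M}S_i|_\mathcal{M}\|\le\|S_i\|\le c(n)$.

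Thus the adjoint tuple of $\underline{R}|_\mathcal{M}$ lies in $AY_n$ with the required norm bound, which is precisely the assertion that $\underline{R}|_\mathcal{M}\in\mathcal{F}_n$. The crucial idea that removes the obstacle in (2) is to realise the compression of $\underline{S}$ as a compression of the \emph{already constructed} isometric dilation $\underline{V}$, so that membership in $AY_n$ is detected by Lemma \ref{compression} rather than by a direct manipulation of the fundamental equations, which would fail.
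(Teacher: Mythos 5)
Your proof is correct and follows essentially the same route as the paper's: the substantive closure property (2) is established exactly as in the paper, namely by using the dilation theorem to realise the compression of $\underline{S}$ to the co-invariant subspace $\mathcal M$ as the compression of an Agler-Young isometry to a co-invariant subspace and then invoking Lemma \ref{compression}, while (1) and (3) are the same routine checks. The only (immaterial) divergence is in (4), where the paper argues that a unital $*$-representation preserves the positivity appearing in the characterization of Lemma \ref{hered}, whereas you push $\pi$ through the fundamental equations via the functional calculus identity $\pi(D_{S_n})=D_{\pi(S_n)}$ and the fact that $\pi$ does not increase numerical radius; both verifications are valid.
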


\begin{proof} Condition (1) is satisfied because of the constant $c$.

To see that condition (2) is satisfied, let $(S_1^*, S_2^*, \ldots ,S_n^*) \in \mathcal F_n$ and $\clm \subset \clh$ is an invariant subspace for each $S_i^*$, let $\underline{R} = (R_1, R_2, \ldots , R_n)$ be defined by $R_i^* = S_i^*|_\clm$. Since $\underline{S}  \in AY_n$, it has an Agler-Young isometric dilation $\underline{W} = (W_1, W_2, \ldots ,W_n)$ on $\clk$, say, by The Dilation theorem. From the way it was constructed, we know that $\clh$ is a co-invariant subspace for each $W_i$. Thus, the situation is that $\clk \supset \clh \supset \clm$ and $R_i^* = S_i^*|_\clm = (W_i^*|_\clh)|_\clm = W_i^*|_\clm$. Thus, $\underline{R}$ is the compression of the Agler-Young isometry to a co-invariant subspace. By Theorem \ref{compression}, $\underline{R} \in AY_n$.

For (3), a computation involving direct sums is needed. Since it is very straightforward, we omit the proof.

To prove (4), we note that unital $*$-homomorphisms preserve positivity and hence if we start with an $(S_1^*, S_2^*, \ldots , S_n^*)$ in $\mathcal F_n$, we have
$$ w(n) (I - \pi(S_n)^* \pi(S_n)) \ge \mathrm{Re} (\exp{i\theta} (\pi(S_i) - \pi(S_{n-i})^*\pi(S_n)) \mbox{ for all } \theta \in [0,2\pi)$$
which is a characterization.

\end{proof}

\begin{defn} A subset $\mathcal B$ of a family $\mathcal F$ is called a model for the family if
\begin{enumerate}
\item $\mathcal B$ is closed with respect to unital representations and direct sums,
\item for every operator tuple $\underline{T}$ in $\mathcal F$ acting on $\clh$, there is an operator tuple $\underline{B} \in \mathcal B$ acting on a bigger space $\clk \supset \clh$ such that $\clh$ is invariant under $\underline{B}$ and $T_i = B_i|_\clh$ for every $i$. \end{enumerate}
If a model $\mathcal B$ has the property that it is smallest, that is, $\mathcal B \subset \mathcal B^\prime$ for any model $\mathcal B^\prime$, then $\mathcal B$ is called a boundary.

\end{defn}
Agler showed in Theorem 5.3 of \cite{AglerFamily} that every family has a unique boundary. Consequently, it is natural to ask what the boundary is of the family consisting of conjugates of Agler-Young class operators.

\begin{lem}
For the family $\mathcal F_n$ defined above, the boundary $\mathcal B$ is given by
\begin{align} \mathcal B = &\{ (S_1^*, S_2^*, \ldots ,S_n^*) : \underline{S} = (S_1, S_2, \ldots ,S_n) \mbox{ is an Agler-Young isometry } \nonumber \\
&   \mbox{ with each } S_i \mbox{ being norm bounded by the constant } c(n) \} \nonumber \end{align} \end{lem}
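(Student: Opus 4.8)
The plan is to verify directly that $\mathcal B$ is a model and that it is contained in every model; by definition this exhibits $\mathcal B$ as the (necessarily unique) smallest model, i.e. the boundary, whose existence is guaranteed by Theorem 5.3 of \cite{AglerFamily}. To see that $\mathcal B$ is a model, note first that every isometry is a contraction and every Agler-Young isometry lies in $AY_n$, so $\mathcal B \subseteq \mathcal F_n$. Closure under direct sums and unital $*$-representations follows from the characterization that $\underline S$ is an Agler-Young isometry exactly when $S_n$ is an isometry and $S_i = S_{n-i}^* S_n$: a direct sum of isometries is again an isometry and still satisfies the two relations componentwise, while for a unital $*$-representation $\pi$ one has $\pi(W_n)^*\pi(W_n) = \pi(I) = I$ and $\pi(W_i) = \pi(W_{n-i})^*\pi(W_n)$, so $\pi(\underline W)$ is again an Agler-Young isometry (passing to adjoints keeps us in $\mathcal B$). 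The model property is precisely the Dilation Theorem \ref{schaeffer}: for $(S_1^*, \ldots, S_n^*) \in \mathcal F_n$ with $\underline S \in AY_n$, the tuple $(V_1^{\bl X}, \ldots, V_n)$ is an Agler-Young isometric dilation on $\clk_0 \supseteq \clh$ for which $\clh$ is co-invariant, so $(V_i^{\bl X})^*|_\clh = S_i^*$. Thus $\underline B = ((V_1^{\bl X})^*, \ldots, V_n^*) \in \mathcal B$ has $\clh$ invariant under each $B_i$ with $B_i|_\clh = S_i^*$; since this dilation is built from the same fundamental operators $X_i$, the norm bound $c(n)$ is retained.

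Next I would show $\mathcal B \subseteq \mathcal B'$ for an arbitrary model $\mathcal B'$. Fix $(W_1^*, \ldots, W_n^*) \in \mathcal B$ with $\underline W$ an Agler-Young isometry on $\clh$. Because $\mathcal B \subseteq \mathcal F_n$, the model property of $\mathcal B'$ yields a tuple $(R_1^*, \ldots, R_n^*) \in \mathcal B'$ on some $\clk \supseteq \clh$ such that $\clh$ is co-invariant for $\underline R \in AY_n$ and $R_i^*|_\clh = W_i^*$ for every $i$. The heart of the argument is to upgrade this co-invariance to reduction. For $R_n$ this is a short computation: $R_n$ is a contraction, $\clh$ is $R_n^*$-invariant, and $P_\clh R_n|_\clh = W_n$ is an isometry, so for $h \in \clh$ we have $\|h\| = \|W_n h\| = \|P_\clh R_n h\| \le \|R_n h\| \le \|h\|$, forcing $R_n h \in \clh$. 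Hence $\clh$ reduces $R_n$ with $R_n|_\clh = W_n$, and consequently $\langle (I - R_n^* R_n)h, h\rangle = \|h\|^2 - \|R_n h\|^2 = 0$ for every $h \in \clh$, so $D_{R_n} h = 0$ on $\clh$.

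Now I would invoke the fundamental equation \eqref{AYdef} for $\underline R$, which reads $R_i = R_{n-i}^* R_n + D_{R_n} X_i D_{R_n}$. On $\clh$ the defect term vanishes, so for $h \in \clh$ one gets $R_i h = R_{n-i}^*(R_n h)$; since $R_n h = W_n h \in \clh$ and $\clh$ is $R_{n-i}^*$-invariant, this lies in $\clh$ and equals $W_{n-i}^* W_n h = W_i h$. Therefore $\clh$ reduces every $R_i$ and the restriction of $\underline R$ to $\clh$ is exactly $\underline W$. Restriction to a reducing subspace is a unital $*$-representation $\pi$, so $(W_1^*, \ldots, W_n^*) = \pi\big((R_1^*, \ldots, R_n^*)\big) \in \mathcal B'$ by closure of $\mathcal B'$ under unital $*$-representations. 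This yields $\mathcal B \subseteq \mathcal B'$ and completes the identification of $\mathcal B$ as the boundary.

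The main obstacle is precisely the reduction step: propagating "$\clh$ reduces $R_n$" to "$\clh$ reduces every $R_i$." This is where the Agler-Young structure carries the weight, for the vanishing of $D_{R_n}$ on $\clh$ collapses the fundamental equation to the purely isometric relation $R_i = R_{n-i}^* R_n$ on $\clh$, which simultaneously produces the missing invariance and pins the restriction down to be $\underline W$. Everything else is a formal consequence of the definitions of family, model and boundary, combined with the dilation and compression results established earlier.
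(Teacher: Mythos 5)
Your proof is correct and takes essentially the same route as the paper: the model property is deduced from the Dilation Theorem, and minimality is obtained by showing that whenever an element of $\mathcal B$ is realized as the restriction of a family member to an invariant subspace, that subspace is forced to be reducing, after which closure of the competing model under unital $*$-representations finishes the argument. The paper packages this last step in Agler's language of extremals and runs the computation with block matrices (the defect $I - A_nA_n^*$ vanishing on the subspace kills the off-diagonal entries via the fundamental equation), which is exactly your coordinate-free norm argument that $D_{R_n}$ annihilates $\clh$ followed by the vectorial use of the fundamental equation.
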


\begin{proof} By the Dilation Theorem, $\mathcal B$ above is a model for $\mathcal F_n$. That it is actually the boundary will require a little more argument.

Agler defines an element $\underline{T}$ of a family to be $extremal$ if it can be the restriction of a member of family to an invariant subspace only when the invariant subspace is actually a reducing subspace and then shows that extremals of a family are always contained in a model. Indeed, if $\underline{T}$ is an extremal and $\mathcal C$ is a model, then there is an $\underline{R}$ in the model $\mathcal C$ and invariant subspace $\mathcal N$ for $\underline{R}$ such that $\underline{T} = \underline{R} |_{\mathcal N}$. But, since $\underline{T}$ is extremal, $\mathcal N$ has to be reducing. Since $\mathcal C$ is a model, by the first criterion in the definition of a model, $\underline{T}$ is in the model.

In our case, $\mathcal B$ above consists of extremals. To see it, let $\underline{T} \in \mathcal B$ and let $\underline{A}$ be an extension of it, i.e., $\underline{A}$ acts on $\clh$ and there is a subspace $\mathcal N$ such that $\underline{T} = \underline{A} |_{\mathcal N}$. Since the last component of $\underline{T}$ is a co-isometry, clearly $\mathcal{N}$ is a reducing subspace (this is the reason co-isometries form the extremals in the family of contractions). Thus with respect to the decomposition $\clh = \mathcal N \oplus \mathcal N^\perp$, we have
$$ A_i = \left(
           \begin{array}{cc}
             T_i & * \\
             0 & R_i \\
           \end{array}
         \right) \mbox{ for } i=1,2, \ldots , n-1 \mbox{ and } A_n = \left(
           \begin{array}{cc}
             T_n & 0 \\
             0 & R_n \\
           \end{array}
         \right).$$
Now, a computation of $A_i^* - A_{n-i}A_n^*$ shows that its range cannot be contained in the range of $I - A_nA_n^*$ (which it has to be because $(A_1^*, A_2^*, \ldots ,A_n^*)$ is in the Agler-Young class) unless the $(1,2)$ entries of all $A_i$ are $0$. Thus $\mathcal B$ consists of extremals and hence is contained in every model.  \end{proof}

\section{The Agler-Young class and the truncated Toeplitz operators}

We follow the notations of Sarason \cite{Sarason}. For an inner function $u$, denote by $K^2_u$ the orthocomplement $H^2 \ominus uH^2$ of the shift invariant subspace $uH^2$. In accordance with Sarason, $P$ will denote the projection from $L^2$ to $H^2$. We shall greatly use the fact that $ \mathcal{K}_u^2$ is invariant under the backward shift operator. The forward shift operator will be denoted by $T_z$ and the backward shift operator by $T_z^*$.

	Let $P_u$ denote the projection from $H^2$ onto $ \mathcal{K}_u^2$.  The truncated Toeplitz operator with symbol $\varphi$ on $ \mathcal{K}_u^2$ is defined as
$$A_\varphi = P_uT_\varphi \mid_{ \mathcal{K}_u^2} = P_uM_\varphi \mid_{ \mathcal{K}_u^2}$$
This first appeared in Sarason, \cite[p.~492]{Sarason}. Let $P_c$ denote the projection operator from $H^2$ to the one-dimensional space of constant functions.

	We shall heavily use a tool called conjugation denoted by $C$, which acts on $ \mathcal{K}_u^2$ as
$$C(g)(z) = u(z)\overline{zg(z)} , \;\; g \in   \mathcal{K}_u^2.$$
It is linear with respect to addition, $C(f+g)=C(f)+C(g)$, conjugate linear with respect to scalar multiplication, $C(af)= \bar{a}C(f)$ where $a \in \mathbb{C}$ and satisfies the following properties:
\begin{enumerate}
\item $CC(f)=f$ (involution);
\item $ \langle Cf,Cg \rangle = \langle g,f \rangle$ (antiunitary),
\item for truncated Toeplitz operators, $CA_\varphi C=A_\varphi^*$. \end{enumerate}
 Sometimes, $Cf$ will be denoted by $\tilde{f}$. Further details about $A_\varphi$ and $C$ can be found in Sarason's paper \cite[p.~495]{Sarason}.

	If $k_w$ denotes the reproducing kernel $k_w(z) = (1 - z\wbar)^{-1}$ on the Hardy space, then its projection $P_uk_w$ on $\mathcal{K}_u^2$ is denoted by $k_w^u$. Both  $k_0^u$ and $\tilde{k_0^u}$ will play significant roles for us.

	Before moving forward, it will be worthwhile to mention that whenever $\mathcal{K}_u^2$ is non trivial (i.e., is a proper non zero subspace of $H^2$), then $k_0^u \neq 0$ and consequently $\tilde{k_0^u} \neq 0$. Indeed, as $k_0^u(z) = 1- \overline{u(0)}u(z)$, (see \cite[p.~494]{Sarason}), if $k_0^u$ is constant, then $u$ has to be constant, which can never give rise to a non trivial $\mathcal{K}_u^2$. So, throughout this paper, we will assume that $\mathcal{K}_u^2$ is non trivial and thus $k_0^u \neq 0$.

\begin{theorem} Let $\varphi$ be an $L^\infty$ function. Then $(A_\varphi, A_z)$ is in the Agler-Young class if and only if the function $\varphi$ is of a particular form, viz., $\varphi= \bar{c} + cz + g$ where $g \in uH^2 + \overline{uH^2}$. In this case, $A_\varphi$ and $A_z$ commute.  \label{AY+tT}
\end{theorem}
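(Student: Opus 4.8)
The plan is to read membership in $AY_2$ as a single operator equation and then exploit the one dimensionality of the defect space of the compressed shift. Since $n=2$, the pair $(A_\varphi,A_z)$ lies in $AY_n$ exactly when
$$A_\varphi-A_\varphi^*A_z=D_{A_z}X D_{A_z},\qquad X\in\mathcal B(\mathcal D_{A_z}).$$
So the first thing I would do is compute the defect operator of $A_z$. From $M_zM_z^*=I-P_c$ on $H^2$ one gets, after compressing to $\mathcal K^2_u$, the rank one identity $I-A_zA_z^*=k_0^u\otimes k_0^u$; then the conjugation relation $CA_zC=A_z^*$ together with $C(x\otimes y)C=Cx\otimes Cy$ yields
$$I-A_z^*A_z=\tilde{k_0^u}\otimes\tilde{k_0^u}.$$
Because $\mathcal K^2_u$ is assumed nontrivial we have $\tilde{k_0^u}\neq0$, so $\mathcal D_{A_z}=\overline{\mathrm{Ran}}\,D_{A_z}$ is one dimensional. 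Hence every $X\in\mathcal B(\mathcal D_{A_z})$ is a scalar, and $D_{A_z}XD_{A_z}=\lambda D_{A_z}^2$ ranges over $\mathbb C\,(I-A_z^*A_z)$. This reduces the whole problem to the clean statement
$$(A_\varphi,A_z)\in AY_2\iff A_\varphi-A_\varphi^*A_z=\lambda\,(I-A_z^*A_z)\ \text{ for some }\lambda\in\mathbb C.$$

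For sufficiency I would invoke Sarason's description of the kernel of $\varphi\mapsto A_\varphi$, namely $A_g=0$ iff $g\in uH^2+\overline{uH^2}$. If $\varphi=\bar c+cz+g$ with $g$ in this kernel, then $A_\varphi=\bar cI+cA_z$ by linearity of $\varphi\mapsto A_\varphi$, so $A_\varphi$ is an affine function of $A_z$ and in particular commutes with $A_z$; a one-line computation gives $A_\varphi-A_\varphi^*A_z=\bar c\,(I-A_z^*A_z)$, which is of the required form with $\lambda=\bar c$.

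For necessity I would rewrite the reduced equation as a self-improving relation. Setting $N=A_\varphi-\lambda I-\bar\lambda A_z=A_{\varphi-\lambda-\bar\lambda z}$, a direct rearrangement shows that $A_\varphi-A_\varphi^*A_z=\lambda(I-A_z^*A_z)$ is equivalent to $N=N^*A_z$. Taking adjoints gives $N^*=A_z^*N$, and substituting back yields $N=A_z^*NA_z$, hence $N=(A_z^*)^{k}N A_z^{k}$ for every $k$. The compressed shift $A_z$ is a pure contraction (indeed $\|A_z^kf\|=\|A_z^{*k}Cf\|\to0$, since the backward shift is pure and $C$ is antiunitary), so for all $f,g$,
$$\langle Nf,g\rangle=\langle NA_z^{k}f,\ A_z^{k}g\rangle\longrightarrow 0,$$
forcing $N=0$. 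Thus $A_\varphi=\lambda I+\bar\lambda A_z=A_{\lambda+\bar\lambda z}$, so $A_{\varphi-\lambda-\bar\lambda z}=0$ and $\varphi-\lambda-\bar\lambda z\in uH^2+\overline{uH^2}$; writing $c=\bar\lambda$ this is exactly $\varphi=\bar c+cz+g$, and the commutation $A_\varphi A_z=A_zA_\varphi$ is immediate from $A_\varphi=\lambda I+\bar\lambda A_z$.

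The step I expect to be the real engine is the reduction in the first paragraph: recognizing that the fundamental operator $X$ collapses to a scalar, so that $D_{A_z}XD_{A_z}$ is forced to be a multiple of $I-A_z^*A_z$, is what makes the fundamental equation tractable, and it rests entirely on the rank one defect computation. The only genuine subtlety afterwards is converting that equation into the fixed-point relation $N=A_z^*NA_z$; once this is in place, purity of the compressed shift closes the argument, and one never needs to classify rank one truncated Toeplitz operators.
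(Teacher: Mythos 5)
Your proof is correct, and while the sufficiency half coincides with the paper's, your necessity argument takes a genuinely different route. Both arguments start identically: the rank-one defect identities $I-A_zA_z^*=k_0^u\otimes k_0^u$ and $I-A_z^*A_z=\widetilde{k_0^u}\otimes\widetilde{k_0^u}$ (Sarason's Lemma 2.4 transported by the conjugation $C$) make $\mathcal D_{A_z}$ one dimensional, so the fundamental operator collapses to a scalar and membership in $AY_2$ reads $A_\varphi-A_\varphi^*A_z=\lambda(I-A_z^*A_z)$; sufficiency is then essentially the same two-line computation with $A_\varphi=\bar c I+cA_z$ in both texts. For necessity, however, the paper stays inside Sarason's symbol calculus: it writes $\varphi=\bar g+zh$ with $g,h\in\mathcal K_u^2$, isolates the rank-one piece $h\otimes k_0^u$, invokes Sarason's Theorem 5.1 to recognize the remaining truncated Toeplitz operator $A_{f-\overline{zf}}$ as a multiple of $\widetilde{k_0^u}\otimes k_0^u$, and then uses the characterization of symbols of the zero operator to solve for $f$, $h$ and $g$ explicitly, ending in a two-case analysis whose degenerate case is $\dim\mathcal K_u^2=1$. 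You replace all of this by a single operator-theoretic mechanism: the reduced fundamental equation is equivalent to $N=N^*A_z$ for $N=A_{\varphi-\lambda-\bar\lambda z}$, which self-improves to $N=A_z^{*k}NA_z^k$ for all $k$, and the $C_{00}$ property of the compressed shift (purity of the backward shift plus $A_z^k=CA_z^{*k}C$) forces $N=0$; Sarason's Theorem 3.1 then pins down the symbol. What your route buys is brevity and uniformity: no classification of rank-one truncated Toeplitz operators, no symbol manipulation, and no case analysis --- the one-dimensional $\mathcal K_u^2$ needs no separate treatment, whereas the paper disposes of its Case II with only an ``it is easily seen.'' What the paper's route buys is explicit structure: it produces formulas for $h$, $f$ and $g$ in terms of $k_0^u$ and $\widetilde{k_0^u}$ along the way, exhibiting exactly how the symbol decomposes, information that your fixed-point argument does not reveal. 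Both proofs ultimately rest on the same two inputs from Sarason --- Theorem 3.1 (the kernel of $\varphi\mapsto A_\varphi$) and Lemma 2.4 (the rank-one defects) --- so neither requires deeper prerequisites, but yours is the shorter and more robust argument.
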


\begin{proof}  This proof depends heavily on results from  Sarason's paper \cite{Sarason}.

 Recall from Theorem 3.1 in \cite{Sarason} that $A_\varphi = A_\psi$ if and only if $\varphi - \psi$ is in $uH^2 + \overline{uH^2}$. If $\varphi= \bar{c} + cz + g$ where $g \in uH^2 + \overline{uH^2}$, then $A_\varphi = A_{\bar c + cz}$. Thus, $A_\varphi$ is the compression of the analytic Toeplitz operator $T_{\bar c + cz}$ to the co-invariant subspace $\mathcal{K}_u^2$. Thus, $A_\varphi^* = T_{\bar c + cz}^*|_{\mathcal{K}_u^2}$. So,
$$A_\varphi^* - A_\varphi A_z^* = P_u ( T_{\bar c + cz}^* - T_{\bar c + cz} T_z^*) = c (I - A_z A_z^*) = D_{A_z^*}^{1/2} c D_{A_z^*}^{1/2}.$$
So, $(A_\varphi^*, A_z^*)$ is in the Agler-Young class. Also, $D_{A_z^*} = {k_0^u} \otimes {k_0^u}$ from Lemma 2.4 of \cite{Sarason}. Thus, $A_\varphi^* - A_\varphi A_z^* = c k_0^u \otimes k_0^u$. Now, a computation shows that
$$A_\varphi - A_\varphi^*A_z = C(A_\varphi^* - A_\varphi A_z^*)C = \bar c \tilde{k_0^u} \otimes \tilde{k_0^u}  = D_{A_z}^{1/2} \bar c D_{A_z}^{1/2}.$$
Hence, $(A_\varphi, A_z)$ is in the Agler-Young class.

The converse implication is more subtle. Let $(A_\varphi, A_z)$ be in the Agler-Young class. Write $\varphi$ as $\bar g + zh$ where $g$ and $h$ are from $H^2$. Decompose $g$ as the direct sum of two functions, one coming from $uH^2$ and the other from $\mathcal{K}_u^2$. Do the same for $h$. Then, invoke Theorem 3.1 in \cite{Sarason} to conclude that $g$ and $h$ can be taken to be in $\mathcal{K}_u^2$ without loss of generality. Let $f = g - h$. Then $f$ belongs to $\mathcal{K}_u^2$. Now, putting $\varphi_1 = \bar h + zh$, we get $\varphi = \bar g + zh = (\bar h + zh) + \bar f$, so that
\begin{align*}A_\varphi^* - A_\varphi A_z^* &= (A_{\varphi_1}^* - A_{\varphi_1} A_z^*) + (A_{\bar f}^* - A_{\bar f} A_z^*) \\
&= (A_{\varphi_1}^* - A_{\varphi_1} A_z^*) + (A_f - A_{\bar f}A_{\bar z}) = (A_{\varphi_1}^* - A_{\varphi_1} A_z^*) + A_{f - \overline{zf}}.\end{align*}
This sum is easier to analyze because of the special form of $\varphi_1$.
\begin{align*}
A_{\varphi_1}^* - A_{\varphi_1} A_z^* &= A_{\bar h + zh}^* - A_{\bar h + zh} A_z^* \\
&= A_h + A_{z\bar h} - A_{z\bar h} - A_hA_zA_z^* \\
&= A_h(I - A_zA_z^*) = A_h(k_0^u \otimes k_0^u) = P_uhk_0^u\otimes k_0^u = h\otimes k_0^u\end{align*}
because $k_0^u = 1 - \bar{u(0)}u$ and $h$ belongs to $\mathcal{K}_u^2$. Thus, $A_{\varphi_1}^* - A_{\varphi_1} A_z^*$ is the sum of a rank one operator and a truncated Toeplitz operator. On the other hand, it is also a rank one operator as the following argument shows.

The pair $(A_\varphi, A_z)$ satisfies the fundamental equation. Moreover, $D_{A_z}$ is the rank one projection $\tilde{k_0^u} \otimes \tilde{k_0^u}$ by lemma 2.4 in \cite{Sarason}. Hence,
$$A_\varphi - A_\varphi^*A_z = \bar{d_1} \tilde{k_0^u} \otimes \tilde{k_0^u}$$
for some scalar $d_1$. Conjugating both sides with $C$, we get
$$A_\varphi^* - A_\varphi A_z^* = d_1 ({k_0^u} \otimes {k_0^u}).$$

As the arguments above show, the truncated Toeplitz operator $A_{f - \overline{zf}}$ is of rank one. But the only rank one operator that it could be is a multiple of $\widetilde{k_0^u} \otimes k_0^u$, see Theorem 5.1 of \cite{Sarason}.  Hence,
$$h \otimes k_0^u + d(\widetilde{k_0^u} \otimes k_0^u) = d_1(k_0^u \otimes k_0^u).$$
In other words,
\begin{equation} \label{h} h = d_1 k_0^u - d\widetilde{k_0^u}. \end{equation}

We now need to rewrite the symbol of the rank one truncated Toeplitz operator $\widetilde{k_0^u} \otimes k_0^u$, viz., $u\bar z$ (see page 502 in \cite{Sarason}) in a certain way. We write the symbol as
$$u \bar z = (u - u(o))\bar z + u(0) \bar z = \widetilde{k_0^u} + u(0) \bar z = \widetilde{k_0^u} + u(0)\overline{z(k_0^u + \overline{u(0)}u)} = \widetilde{k_0^u} + u(0) \overline{zk_0^u} + u(0)^2 \overline{zu}.$$
Of the three terms, the last one does not contribute to the truncated Toeplitz operator because it belongs to $\overline{uH^2}$. Thus,
$$A_{f - \overline{zf}} = d(\widetilde{k_0^u} \otimes K_0^u) = A_{d\widetilde{k_0^u} + u(0)d\overline{zk_0^u}}$$
implying that the truncated Toeplitz operator for the symbol $(f - d\widetilde{k_0^u}) - \overline{z(f + \overline{u(0)d}k_0^u)}$ is $0.$ Sarason characterized the symbols that produce the zero truncated Toeplitz operator. According to the Corollary on page 499 of \cite{Sarason}, we get
\begin{equation} \label{twoequations} f - d\widetilde{k_0^u} = a k_0^u \text{ and } P_u (z(f + \overline{u(0)d}k_0^u)) = \bar a k_0^u \end{equation}
for some scalar $a$. From the first of the equations above, we get that
\begin{equation} \label{f} f = ak_0^u + d \widetilde{k_0^u}.\end{equation}
Summing equations \eqref{h} and \eqref{f}, we get that
\begin{equation} \label{g} g = f + h = ak_0^u + d_1 k_0^u. \end{equation}
Since $\varphi = \bar g + zh$, we get from equations \eqref{g} and \eqref{h} that
\begin{equation}\label{phiform} \varphi = \overline{ak_0^u + d_1 k_0^u} + z(d_1k_0^u - d\widetilde{k_0^u}) = (\overline{d_1 k_0^u} + z(d_1k_0^u)) + (\overline{ak_0^u} - dz\widetilde{k_0^u}).\end{equation}
Let $\varphi_2 = \overline{ak_0^u} - dz\widetilde{k_0^u}$. We shall now show that this symbol gives the zero truncated Toeplitz operator. To that end, we need to analyze the second equation in \eqref{twoequations}. Decompose the vector $f + \overline{u(0)} d k_0^u$ in $\mathcal{K}_u^2$ as the direct sum of a vector in the span of $\widetilde{k_0^u}$ and one that is orthogonal to $\widetilde{k_0^u}$, viz., $f + \overline{u(0)} d k_0^u = c \widetilde{k_0^u} + v_\perp$ where $c$ is a scalar and $v_\perp$ is orthogonal to $\widetilde{k_0^u}$. Then, $zv_\perp$ is in $\mathcal{K}_u^2$ (page 512 of \cite{Sarason}) and
\begin{equation} \label{P-uzetc} P_u(z \widetilde{k_0^u}) = P_u(u - u(0)) = -u(0)P_u(1) = -u(0) k_0^u. \end{equation}
Thus we get,
$$ \bar a k_0^u = P_u (z(f + \overline{u(0)d}k_0^u)) = P_u(z(c\widetilde{k_0^u} + v_\perp)) = P_u(z(c\widetilde{k_0^u}) + P_u(zv_\perp)) = -u(0) k_0^u + zv_\perp $$
implying that $zv_\perp = (\bar a + c u(0)) k_0^u$. But, by definition of $v_\perp$, we have $zv_\perp$ to be orthogonal to $k_0^u$. Hence,
\begin{equation} \label{aandc} \bar a + cu(0) = 0. \end{equation}
This also shows that $f + \overline{u(0)} d k_0^u = c\widetilde{k_0^u}$.  Recalling the value of $f$ from \eqref{f}, we get that
\begin{align}
ak_0^u + d \widetilde{k_0^u} + \overline{u(0)d} k_0^u &= c\widetilde{k_0^u} \nonumber \\
\text{Or, } (-\overline{cu(0)} + \overline{du(0)})k_0^u &= (c-d) \widetilde{k_0^u}. \label{ceqorneqd} \end{align}
This brings us to two cases.

\vspace*{5mm}

\textsf{Case - I}

In this case, $c=d$. By \eqref{aandc}, we have $a = - \overline{du(0)}$. With this value of $a$ and using \eqref{P-uzetc}, we get
$$P_u(dz\widetilde{k_0^u}) = d P_u(z\widetilde{k_0^u}) = du(0) k_0^u = \bar a k_0^u.$$
Thus,
$$A_{\varphi_2} = A_{\overline{ak_0^u} - dz \widetilde{k_0^u}} = A_{\overline{ak_0^u} - P_u (dz \widetilde{k_0^u})} =
A_{\overline{ak_0^u} - \overline{a} k_0^u} = 0.$$
Hence finally
$$A_\varphi = A_{\overline{d_1k_0^u} + zd_1 k_0^u} = A_{\overline{d_1} + zd_1}$$
using the formula $k_0^u = 1 - \overline{u(0)} u$. Thus $\varphi = \overline{d_1} + zd_1 + g$ for some $g$ in $uH^2 + \overline{uH^2}$.

\vspace*{5mm}

\textsf{Case - II}

In this case, $c \neq d$. This, in view of \eqref{ceqorneqd}, means that $\widetilde{k_0^u}$ is a multiple of $k_0^u$ which can only happen if $\mathcal{K}_u^2$ is one-dimensional. In this one-dimensional case, it is easily seen that the result is true.

\end{proof}

\section{Commutativity}

The agenda for this section is to point out that many well-studied commuting operator tuples arising from complex geometry are in Agler-Young class.

\subsection{$\Gamma_n$-contractions and Tetrablock contractions}

The {\em{tetrablock}} is defined by
$$
E = \{\underline{x}=(x_1,x_2,x_3)\in \mathbb{C}^3:
 1-x_1z-x_2w+x_3zw \neq 0 \text{ whenever }|z| < 1\text{ and }|w| < 1 \}.
$$
This is also a polynomially convex domain. A commuting triple of operators $(S_1,S_2,S_3)$ on a Hilbert space $\mathcal{H}$ is called a tetrablock contraction if $\bar{E}$ is a spectral set. These were introduced in \cite{BhTetra}.

\begin{lem} A $\Gamma_n$-contraction is in $AY_n$ and a tetrablock contraction is in $AY_3$. \end{lem}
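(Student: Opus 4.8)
The plan is to read the lemma as a statement about solvability of the fundamental equations \eqref{AYdef} and to observe that, for each of the two classes named, these equations are literally the defining relations of already-studied fundamental operators. For a $\Gamma_n$-contraction $\underline{S}=(S_1,\ldots,S_n)$ the last coordinate $S_n$ is the ``product'' coordinate of the symmetrized polydisc and is automatically a contraction, while the first $n-1$ coordinates carry the elementary symmetric data. I would invoke the theory in \cite{SS}, where such a tuple is shown to admit (unique) fundamental operators $X_1,\ldots,X_{n-1}\in\mathcal{B}(\mathcal{D}_{S_n})$ with $S_i-S_{n-i}^*S_n=D_{S_n}X_iD_{S_n}$ for $i=1,\ldots,n-1$. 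Since these are exactly equations \eqref{AYdef}, membership $\underline{S}\in AY_n$ is immediate, with the $X_i$ serving as the fundamental operator tuple.

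For the tetrablock the argument is the same with $n=3$: a tetrablock contraction $(S_1,S_2,S_3)$ has $S_3$ a contraction and, by \cite{BhTetra}, carries fundamental operators $F_1,F_2\in\mathcal{B}(\mathcal{D}_{S_3})$ satisfying $S_1-S_2^*S_3=D_{S_3}F_1D_{S_3}$ and $S_2-S_1^*S_3=D_{S_3}F_2D_{S_3}$. Reading $i=1,2$ in \eqref{AYdef} shows that these are verbatim the $AY_3$ fundamental equations, so $(S_1,S_2,S_3)\in AY_3$ with $(X_1,X_2)=(F_1,F_2)$.

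It is worth recording how Theorem \ref{ChHered} makes this transparent and supplies a self-contained alternative. The $\Gamma_n$-contraction condition is itself a hereditary positivity (see \cite[Proposition 2.18]{SS}), and what one must verify to land in $AY_n$ is precisely the family of hereditary inequalities $h_{\alpha,i}(\underline{S})\geq 0$, equivalently the Fejer-Riesz type domination $w(n)(I-S_n^*S_n)\geq \mathrm{Re}(e^{i\theta}(S_i-S_{n-i}^*S_n))$ of Lemma \ref{hered}. Thus a second route would bypass the cited existence theorems entirely: one would deduce these inequalities directly from the defining positivity of the $\Gamma_n$- and tetrablock classes and then apply the operator Fejer-Riesz theorem (Lemma 4.1 of \cite{BhPSR}) to produce the $X_i$ in $\mathcal{B}(\mathcal{D}_{S_n})$.

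The main thing to get right is the matching of conventions. I must confirm that the standard fundamental-operator equations for both classes use the conjugate-index pairing $S_i\leftrightarrow S_{n-i}^*$ appearing in \eqref{AYdef}; for $\Gamma_n$ this is forced by the boundary relation $\overline{s_i}=s_{n-i}/\overline{p}$ on the distinguished boundary, and the tetrablock has the analogous relation. I must also check that the operators produced genuinely live on $\mathcal{D}_{S_n}$, i.e. that $S_i-S_{n-i}^*S_n$ has range controlled by $D_{S_n}$ on both sides, which is exactly the content of the cited existence results (or, along the second route, of the positivity inequality). Once the indexing and these range conditions are aligned, no further computation is needed.
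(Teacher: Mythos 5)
Your proposal is correct and follows essentially the same route as the paper: the paper's own proof simply cites \cite{BhTetra} for the tetrablock case and notes that the $\Gamma_n$ case follows similarly by choosing holomorphic functions that yield the hereditary positivity $h_{\alpha,i}(\underline{S})\geq 0$, which via Lemma \ref{hered} (the operator Fejer-Riesz argument from \cite{BhPSR}) produces the fundamental operators. Your citation of the existence results in \cite{SS} and \cite{BhTetra}, together with your ``second route'' through Theorem \ref{ChHered}, is precisely this argument, spelled out in more detail than the paper itself provides.
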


\begin{proof} That a tetrablock contraction is in $AY_3$ was proved in \cite{BhTetra} and the proof for a $\Gamma_n$-contraction is similar with minor modifications to suit the needs. It boils down to choosing a particular family holomorphic function that leads to the hereditary polynomials $h_{\alpha,i}$ mentioned before. \end{proof}

This has gained a lot of recent attention, see \cite{SS}, \cite{APal}, \cite{SPalAug2017}. In particular, the lemma above is mentioned in \cite{SPalAug2017} and is proved in \cite{APal}.

\subsection{$\Gamma_n$-isometries and Tetrablock isometries} The distinguished boundary $b\Gamma_n$ of the symmetrized polydisc is the symmetrized torus, see Theorem 2.4 of \cite{SS}. An $n$-tuple of commuting normal operators with joint spectrum contained in $b\Gamma_n$ is called a $\Gamma_n$-unitary and the restriction of such a $\Gamma_n$-unitary to an invariant subspace is called a $\Gamma_n$-isometry. Appealing to Theorem 4.12 of \cite{SS}, we get that

\begin{lem} Up to a unitary conjugation, a commuting tuple of bounded operators $\underline{S}=(S_1, S_2, \ldots ,S_n)$ is a $\Gamma_n$-isometry if and only if $\underline{S}$ is such an Agler-Young isometry that
$$(\frac{n-1}{n}S_1, \frac{n-2}{n}S_2, \ldots ,\frac{1}{n}S_{n-1})$$
is a $\Gamma_n$-contraction. \end{lem}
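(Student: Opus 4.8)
The plan is to obtain the equivalence by transcribing the known characterization of $\Gamma_n$-isometries, Theorem 4.12 of \cite{SS}, into the Agler--Young vocabulary; the only genuine work is recognizing that its hypotheses say exactly ``commuting Agler--Young isometry, plus a scaling condition''. Two conceptual bridges make the translation transparent. The first is the elementary symmetric-function identity $s_{n-i}=\overline{s_i}\,s_n$, valid on the distinguished boundary $b\Gamma_n$ where each root lies on the circle; this is precisely the Agler--Young relation $S_i=S_{n-i}^*S_n$ written for the symmetric functions. The second is the Gauss--Lucas normalization, which explains the weights $\frac{n-i}{n}$: they are the coefficients of $\frac1n p'$ when $p$ is the monic degree-$n$ polynomial $\prod(x-z_j)$, so the scaled $(n-1)$-tuple $\big(\frac{n-1}{n}S_1,\dots,\frac1n S_{n-1}\big)$ records the elementary symmetric functions of the critical points of $p$, which lie in $\overline{\mathbb D}$ whenever the roots do.

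First I would treat the forward implication. A $\Gamma_n$-isometry is, by definition, the restriction of a $\Gamma_n$-unitary $\underline R$ to a common invariant subspace; since $\overline{\Gamma_n}$ is a spectral set for the normal tuple $\underline R$ and restriction to an invariant subspace can only decrease the norm of a polynomial in the tuple, $\overline{\Gamma_n}$ is a spectral set for $\underline S$ as well, i.e.\ $\underline S$ is a $\Gamma_n$-contraction. As recorded in the introduction, every $\Gamma_n$-contraction lies in $AY_n$, so the fundamental equations $S_i-S_{n-i}^*S_n=D_{S_n}X_iD_{S_n}$ hold; since the last component $S_n$ of a $\Gamma_n$-isometry is an isometry, $D_{S_n}=0$ and these collapse to $S_i=S_{n-i}^*S_n$. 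Hence $\underline S$ is a commuting Agler--Young isometry. That the scaled tuple is a $\Gamma_{n-1}$-contraction is exactly the additional conclusion furnished by Theorem 4.12 of \cite{SS}.

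For the converse I would feed the data into the sufficiency half of Theorem 4.12 of \cite{SS}. Its hypotheses are precisely commutativity, $S_n$ an isometry, the relations $S_i=S_{n-i}^*S_n$, and membership of the derivative-scaled $(n-1)$-tuple in the appropriate symmetrized polydisc; its conclusion realizes $\underline S$, up to a unitary conjugation, as a $\Gamma_n$-isometry. The hypothesis that $\underline S$ is a commuting Agler--Young isometry supplies the first three items verbatim, and the scaling hypothesis supplies the fourth, so the theorem applies and yields the claim. The qualifier ``up to a unitary conjugation'' is simply inherited from the model description in that theorem, since $\underline S$ is identified with a $\Gamma_n$-isometry only through the implementing unitary.

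The step I expect to be the real obstacle is purely bookkeeping: pinning down the weights $\frac{n-i}{n}$ and the index of the target symmetrized polydisc. Since the displayed tuple has $n-1$ entries, it is to be tested against $\Gamma_{n-1}$ rather than $\Gamma_n$, and I would confirm the weights by tracing the Gauss--Lucas passage $p\mapsto \frac1n p'$ through the symmetrization identity $s_{n-i}=\overline{s_i}\,s_n$ and through the hereditary description $h(\underline z,\underline z^*)=\sum_{i,j=0}^{n}(-1)^{i+j}\{n-(i+j)\}z_i^* z_j$ of $\Gamma_n$-contractions recalled in Section 6, verifying that these are exactly the weights that turn the Agler--Young isometry relations into the defining inequalities of $\Gamma_{n-1}$. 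Once the constants are checked, the equivalence is a direct transcription of Theorem 4.12.
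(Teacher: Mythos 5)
Your proposal is correct and takes essentially the same route as the paper: the paper's entire argument for this lemma is the single phrase ``Appealing to Theorem 4.12 of \cite{SS}, we get that\dots'', which is exactly the citation around which your proof is organized. Your additional details---deducing $S_i=S_{n-i}^*S_n$ from the fundamental equations via $D_{S_n}=0$ in the forward direction, and noting that the displayed $(n-1)$-tuple should be tested against $\Gamma_{n-1}$ rather than $\Gamma_n$ (the paper's statement almost certainly contains a slip there)---merely make explicit the transcription that the paper leaves implicit.
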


The description of a tetrablock isometry is simpler. An element $\underline{x} = (x_1, x_2, x_3)$ of $\mathbb C^3$ is a member of the distinguished boundary $bE$ of the tetrablock $E$ if and only if  $x_1 = \bar{x}_2 x_3, |x_3| = 1$ and $|x_2| \le 1$. A commuting triple $\underline{N}=(N_1,N_2,N_3)$ of normal operators is called a tetrablock unitary if the joint spectrum is contained in $bE$. A tetrablock isometry is the restriction of a tetrablock unitary to a common invariant subspace. Tetrablock isometries were characterized in Theorem 5.7 of \cite{BhTetra}. In the language of Agler-Young isometries, a commuting tuple of bounded operators $(S_1, S_2, S_3)$ is a tetrablock isometry if and only if it is an Agler-Young isometry and all $S_i$ are contractions.

\subsection{Commuting dilation} It is well-known that a commuting Agler-Young tuple need not have a commuting Agler-Young dilation, see \cite{SPalCounterexample}. We end by noting that a commuting Agler-Young isometric dilation exists under a condition. This $constrained$ dilation has been observed in \cite{BhTetra}, \cite{APal} and \cite{SPalAug2017}.

\begin{lem}
Let $\underline{S} = (S_1, S_2, \ldots ,S_n)$ be in the Agler-Young class. Suppose, moreover, that the $S_i$ commute with each other and the fundamental operator tuple $X_1, X_2, \ldots , X_{n-1}$ of $\underline{S}$ satisfies \eqref{simple}. Then $\underline{S}$ has a commuting Agler-Young isometric dilation.

\end{lem}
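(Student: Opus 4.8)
The plan is to show that the explicit dilation constructed in Theorem \ref{schaeffer} is already commuting once the two extra hypotheses are imposed, so that no new construction is needed. Concretely, I would take $\underline{V}^{\bl X} = (V_1^{\bl X}, \ldots, V_{n-1}^{\bl X}, V_n)$ on $\m K_0 = \m H \oplus \ell^2(\mathcal D_{S_n})$ exactly as in the Dilation Theorem. By part (1) of that theorem it is an Agler-Young isometric dilation of $\underline{S}$ with $\m H$ co-invariant, and these properties are untouched by the new hypotheses; hence the entire problem collapses to checking that the $n$ operators $V_1^{\bl X}, \ldots, V_{n-1}^{\bl X}, V_n$ commute pairwise.

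The crux is a single intertwining identity for the defect operator, namely
\[
D_{S_n} S_j = X_j D_{S_n} + X_{n-j}^* D_{S_n} S_n, \qquad j = 1, \ldots, n-1, \qquad (\star)
\]
which I would derive as follows. Starting from $D_{S_n}^2 S_j = (I - S_n^* S_n) S_j$, I move $S_n$ past $S_j$ using the commutativity hypothesis $S_n S_j = S_j S_n$, then substitute $S_n^* S_j = S_{n-j}^* - D_{S_n} X_{n-j}^* D_{S_n}$ (the adjoint of \eqref{AYdef} with $i$ replaced by $n-j$), and finally fold in \eqref{AYdef} itself in the form $S_j - S_{n-j}^* S_n = D_{S_n} X_j D_{S_n}$. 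This produces $D_{S_n}^2 S_j = D_{S_n}\big(X_j D_{S_n} + X_{n-j}^* D_{S_n} S_n\big)$. Since every term of $(\star)$ takes values in $\mathcal D_{S_n} = \overline{\mathrm{Ran}}\,D_{S_n}$ while the difference of the two sides is annihilated by $D_{S_n}$ and therefore lands in $\ker D_{S_n} = \mathcal D_{S_n}^\perp$, that difference must vanish, giving $(\star)$.

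With $(\star)$ available I would verify commutativity directly from the block forms used in the proof of part (2) of Theorem \ref{schaeffer}: with respect to $\m H \oplus \ell^2(\mathcal D_{S_n})$ one has $V_n = \begin{pmatrix} S_n & 0 \\ D & E \end{pmatrix}$ and $V_i^{\bl X} = \begin{pmatrix} S_i & 0 \\ C_i & Y_i \end{pmatrix}$. The $(1,1)$-entries commute by the commutativity of $\underline{S}$. Identifying $\ell^2(\mathcal D_{S_n})$ with $H^2(\mathcal D_{S_n})$, each $Y_i$ is the analytic Toeplitz operator $T_{X_i + zX_{n-i}^*}$ and $E = T_z$; thus $Y_i E = E Y_i$ holds trivially, and comparing the Fourier coefficients of the two product symbols shows that $Y_i Y_j = Y_j Y_i$ is equivalent to the three relations $[X_i, X_j] = 0$, $[X_{n-i}^*, X_{n-j}^*] = 0$ and $[X_i, X_{n-j}^*] = [X_j, X_{n-i}^*]$. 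The first is the first half of \eqref{simple}, the second follows from it by taking adjoints, and the third is the second half of \eqref{simple}. This disposes of the $(2,2)$-entries.

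The only delicate step is matching the $(2,1)$-entries, and this is where $(\star)$ does the work. For $V_i^{\bl X} V_n$ against $V_n V_i^{\bl X}$ the equality of the relevant coordinates reduces exactly to $X_i D_{S_n} + X_{n-i}^* D_{S_n} S_n = D_{S_n} S_i$, which is $(\star)$ with $j = i$. For $V_i^{\bl X} V_j^{\bl X}$ against $V_j^{\bl X} V_i^{\bl X}$, substituting $(\star)$ into the first coordinate $X_{n-i}^* D_{S_n} S_j + X_i X_{n-j}^* D_{S_n}$ splits it into a $D_{S_n} S_n$-term and a $D_{S_n}$-term; the former cancels against its partner by $[X_{n-i}^*, X_{n-j}^*] = 0$, and the latter matches by the relation $[X_i, X_{n-j}^*] = [X_j, X_{n-i}^*]$ from \eqref{simple}, while the lower coordinate needs only $[X_{n-i}^*, X_{n-j}^*] = 0$. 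Hence all pairs commute and $\underline{V}^{\bl X}$ is the desired commuting Agler-Young isometric dilation. I expect the derivation of $(\star)$ to be the principal obstacle, as everything afterwards is bookkeeping organised by the block decomposition.
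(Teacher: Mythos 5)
Your proposal is correct and takes essentially the same route as the paper: both use the explicit dilation $\underline{V}^{\bl X}=(V_1^{\bl X},\ldots,V_{n-1}^{\bl X},V_n)$ from Theorem \ref{schaeffer} and reduce the lemma to checking that commutativity of $\underline{S}$ together with \eqref{simple} forces the dilation tuple to commute pairwise. The only difference is that the paper declines to carry out the two commutation computations, deferring to Theorem 6.1 of \cite{BhTetra}, whereas you execute them in full; your key identity $(\star)$, $D_{S_n}S_j = X_jD_{S_n} + X_{n-j}^*D_{S_n}S_n$, is precisely the intertwining relation used in that reference, and both its derivation (via the range argument in $\mathcal D_{S_n}$) and the block-matrix verification of the $(2,1)$ and $(2,2)$ entries are correct.
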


\begin{proof} Suppose the fundamental operator tuple satisfies the given conditions. Then the Agler-Young isometric dilation $\underline{V}^{\bl X} = (V_1^{\bl X}, V_2^{\bl X}, \ldots ,V_{n-1}^{\bl X}, V_n)$ constructed in The Dilation Theorem is a commuting tuple. This is a consequence of two computations - one to show that $V_i^{\bl X}$ commutes with $V_j^{\bl X}$ for $i,j =1,2, \ldots ,n-1$ and another to show that $V_i^{\bl X}$ commutes with $V_n$ for $i=1,2, \ldots ,n-1$. Although the computations are not trivial, similar computations have been done in the literature because the commutative theory has been pursued now for many years and hence we do not repeat them here, see for example the proof Theorem 6.1 in \cite{BhTetra}.

 \end{proof}

 This lemma immediately implies known dilation theorems of $\Gamma$-contractions and tetrablock contractions.

\end{document}